\journal{Soft Computing}
\begin{document}

\newtheorem{tm}{Theorem}[section]
\newtheorem{pp}[tm]{Proposition}
\newtheorem{lm}[tm]{Lemma}
\newtheorem{df}[tm]{Definition}
\newtheorem{tl}[tm]{Corollary}
\newtheorem{re}[tm]{Remark}
\newtheorem{eap}[tm]{Example}

\newcommand{\pof}{\noindent {\bf Proof} }
\newcommand{\ep}{$\quad \Box$}

\newcommand{\al}{\alpha}
\newcommand{\be}{\beta}
\newcommand{\var}{\varepsilon}
\newcommand{\la}{\lambda}
\newcommand{\de}{\delta}
\newcommand{\str}{\stackrel}
\newcommand{\rmn}{\romannumeral}


\renewcommand{\proofname}{\bf Proof}

\allowdisplaybreaks

\begin{frontmatter}

\title{Characterizations of compactness of fuzzy set space with endograph metric
}
\author{Huan Huang}
 \ead{hhuangjy@126.com }
\address{Department of Mathematics, Jimei
University, Xiamen 361021, China}

\date{}

\begin{abstract}

In this paper, we present the characterizations of total boundedness, relative compactness and compactness in fuzzy set spaces equipped with
the endograph metric.
The conclusions in this paper
 significantly improve
the corresponding conclusions given in our previous paper
[H. Huang, Characterizations of endograph metric and $\Gamma$-convergence on fuzzy sets, Fuzzy Sets and Systems 350 (2018),  55-84].
The results in this paper are applicable to
fuzzy sets in a general metric space. The results in our previous paper are applicable to
fuzzy sets in the $m$-dimensional Euclidean space $\mathbb{R}^m$,
which is a specfic metric space.
Furthermore, based on the above results,
we
 give
 the characterizations of relative compactness,
total boundedness and compactness in a kind of common subspaces of general fuzzy sets according to the endograph metric.
As an application, we investigate some relationship between
the endograph metric and the $\Gamma$-convergence on fuzzy sets.

\end{abstract}

\begin{keyword}
  Compactness; Endograph metric; $\Gamma$-convergence; Hausdorff metric
\end{keyword}

\end{frontmatter}

\date{}

\section{Introduction}

Fuzzy set is a fundamental tool to investigate fuzzy phenomenon
\cite{da, du, wu, garcia, wa, gong, wangun, shin}.
A fuzzy set can be identified with its endograph.
The endograph metric $H_{\rm end}$
on fuzzy sets
is the Hausdorff metric defined on their endographs.
It's
shown that
the endograph metric on fuzzy sets
has significant advantages
 \cite{kloeden2, kloeden, kupka, can}.

Compactness is one of the central concepts in topology and analysis and is useful in applications (see \cite{kelley, wa}).
The characterizations
of
compactness in various fuzzy set spaces endowed with different topologies
have attracted much attention \cite{greco,greco3,huang,huang9,huang719,roman,trutschnig}.

In \cite{huang},
we
have given the characterizations of total boundedness, relative compactness and compactness
of
fuzzy set spaces equipped with the endograph metric $H_{\rm end}$.

The results in \cite{huang} are applicable to fuzzy sets in the $m$-dimensional Euclidean space $\mathbb{R}^m$.
$\mathbb{R}^m$ is a specific metric space.
In theoretical research and practical applications,
fuzzy sets in a general metric space
are often used \cite{da, du, greco, greco3}.

In this paper, we present the characterizations of total boundedness, relative compactness and compactness
of the space
fuzzy sets in a general metric space equipped with the endograph metric $H_{\rm end}$.
We point out that the
 characterizations of total boundedness, relative compactness and compactness
given in \cite{huang}
are corollaries of
the corresponding characterizations given in this paper.

Furthermore,
we discuss
 the properties of the endograph metric $H_{\rm end}$,
and then use these properties
and the above characterizations for general fuzzy sets
to give
 the characterizations of relative compactness,
total boundedness and compactness in a kind of common subspaces of general fuzzy sets according to the endograph metric $H_{\rm end}$.

As an application of the characterizations of compactness
given in this paper, we discuss the relationship
between $H_{\rm end}$ metric and $\Gamma$-convergence
on fuzzy sets.

The remainder of this paper is organized as follows.
In Section 2, we recall and give some basic notions and fundamental results related to fuzzy sets
and
the endograph metric and the $\Gamma$-convergence on them.
In Section 3, we give representation theorems for various kinds of fuzzy sets which are useful in this paper.
In Section \ref{cmg},
 we give the characterizations of
relatively compact sets,
totally bounded sets, and compact sets in
space of fuzzy sets in a general metric space equipped with the endograph metric, respectively.
In Section 5, based on the characterizations of compactness
given in Section \ref{cmg},
we give the
characterizations of
relatively compact sets,
totally bounded sets, and compact sets in
 a kind of common subspaces of the fuzzy set space discussed in Section \ref{cmg}.
 In Section 6, as an application of the characterizations of compactness
given in Section \ref{cmg}, we investigate the relationship
between the endograph metric and the $\Gamma$-convergence
on fuzzy sets.
At last, we draw conclusions
in Section 7.

\section{Fuzzy sets and endograph metric and $\Gamma$-convergence on them}

In this section, we recall and give some basic notions and fundamental results related to fuzzy sets
and
the endograph metric and the $\Gamma$-convergence on them.
Readers
can refer to \cite{wu, da, du, rojas, huang17} for related contents.

Let $\mathbb{N}$ denote the set of natural numbers (that is, the set of all positive integers).
Let $\mathbb{R} $ denote the set of real numbers.
Let $\mathbb{R}^m$, $m>1$, denote the
set $\{\langle x_1, \ldots, x_m \rangle: x_i\in \mathbb{R}, \ i=1,\ldots,m \}$.
In the sequel, $\mathbb{R} $ is also written as $\mathbb{R}^1$.

Throughout this paper, we suppose that $X$ \emph{is a nonempty set and it is endowed with a metric} $d$.
For simplicity,
we also use $X$
to \emph{denote the metric space} $(X, d)$.

The metric $\overline{d}$ on $X \times [0,1]$ is defined
as follows: for $(x,\al), (y, \beta) \in X \times [0,1]$,
$$  \overline{d } ((x,\al), (y, \beta)) = d(x,y) + |\al-\beta| .$$

Throughout this paper, we suppose that \emph{the metric on} $X\times[0,1]$ \emph{is} $\overline{d}$.
For simplicity,
we also use $X \times [0,1]$
to denote the metric space $(X \times [0,1], \overline{d})$.

Let $m\in \mathbb{N}$.
For simplicity,
 $\mathbb{R}^m$ is also used to denote the
$m$-dimensional Euclidean space;
$d_m$ is used to denote the Euclidean metric on $\mathbb{R}^m$;
$\mathbb{R}^m \times [0,1]$ is also used to denote
the metric space $(\mathbb{R}^m \times [0,1], \overline{d_m})$.

A fuzzy set $u$ in $X$ can be seen as a function $u:X \to [0,1]$.
A
subset $S$ of $X$ can be seen as a fuzzy set in $X$. If there is no confusion,
 the fuzzy set corresponding to $S$ is often denoted by $\chi_{S}$; that is,
\[ \chi_{S} (x) = \left\{
                    \begin{array}{ll}
                      1, & x\in S, \\
                      0, & x\in X \setminus S.
                    \end{array}
                  \right.
\]
For simplicity,
for
$x\in X$, we will use $\widehat{x}$ to denote the fuzzy set  $\chi_{\{x\}}$ in $X$.
In this paper, if we want to emphasize a specific metric space $X$, we will write the fuzzy set corresponding to $S$ in $X$ as
$S_{F(X)}$, and the fuzzy set corresponding to $\{x\}$ in $X$ as $\widehat{x}_{F(X)}$.

The symbol $F(X)$ is used
to
denote the set of
all fuzzy sets in $X$.
For
$u\in F(X)$ and $\al\in [0,1]$, let $\{u>\al \} $ denote the set $\{x\in X: u(x)>\al \}$, and let $[u]_{\al}$ denote the \emph{$\al$-cut} of
$u$, i.e.
\[
[u]_{\al}=\begin{cases}
\{x\in X : u(x)\geq \al \}, & \ \al\in(0,1],
\\
{\rm supp}\, u=\overline{    \{ u > 0 \}    }, & \ \al=0,
\end{cases}
\]
where $\overline{S}$
denotes
the topological closure of $S$ in $(X,d)$.

If $X$ is replaced by a nonempty set $Y$ in the above two paragraphs, then the definitions and notations in the above two paragraphs still apply except for the notation $[u]_0$.

The symbol $K(X)$ and
 $C(X)$ are used
to
 denote the set of all nonempty compact subsets of $X$ and the set of all nonempty closed subsets of $X$, respectively.
$P(X)$ is used to denote the power set of $X$, which is the set of all subsets of $X$.

Let
$F_{USC}(X)$
denote
the set of all upper semi-continuous fuzzy sets $u:X \to [0,1]$,
i.e.,
$$F_{USC}(X) :=\{ u\in F(X) : [u]_\al \in  C(X) \cup \{\emptyset\}  \  \mbox{for all} \   \al \in [0,1]   \}.  $$

Define
\begin{gather*}
F_{USCB}(X):=\{ u\in  F_{USC}(X): [u]_0 \in K(X)\cup\{\emptyset\} \},
\\
F_{USCG}(X):=\{ u\in  F_{USC}(X): [u]_\al \in K(X)\cup\{\emptyset\} \ \mbox{for all} \   \al\in (0,1] \}.
 \end{gather*}
Clearly,
 $$F_{USCB}(X) \subseteq  F_{USCG}(X)   \subseteq  F_{USC}(X).$$

Define
\begin{gather*}
F_{CON}(X) := \{ u\in F(X): \mbox{for all } \al\in (0,1], \ [u]_\al \mbox{ is connected in } X   \},
\\
F_{USCCON} (X) :=  F_{USC}(X) \cap F_{CON}(X),\\
F_{USCGCON}(X):= F_{USCG}(X) \cap F_{CON}(X).
\end{gather*}

Let $u\in F_{CON}(X)$. Then $[u]_0 = \overline{\cup_{\al>0} [u]_\al}$ is connected in $X$.
The proof is as follows.

 If $u=\chi_{\emptyset}$, then $[u]_0 = \emptyset$ is connected in $X$.
 If $u \not= \chi_{\emptyset}$,
 then there is an $\al\in (0,1]$ such that $[u]_\al \not= \emptyset$.
Note that $[u]_\beta \supseteq [u]_\al$ when $\beta\in [0,\al]$.
Hence $\cup_{0<\beta<\al} [u]_\beta$ is connected, and thus
 $[u]_0 = \overline{\cup_{0<\beta<\al} [u]_\beta}$ is connected.

So
 $$
F_{CON}(X) = \{ u\in F(X): \mbox{for all } \al\in [0,1], \ [u]_\al \mbox{ is connected in } X   \}.
$$

Let
$F^1_{USC}(X)$
denote
the family of all normal fuzzy sets in $F_{USC}(X)$,
i.e.,
$$F^1_{USC}(X) :=\{ u\in F(X) : [u]_\al \in  C(X)  \  \mbox{for all} \   \al \in [0,1]   \}.  $$

We introduce some subclasses of $F^1_{USC}(X)$, which will be discussed in this paper.
Define
\begin{gather*}
F^1_{USCB}(X):= F^1_{USC}(X) \cap F_{USCB}(X) ,
\\
F^1_{USCG}(X):= F^1_{USC}(X) \cap F_{USCG}(X),
\\
  F^1_{USCCON} (X) := F^1_{USC}(X) \cap F_{CON}(X), \\
  F^1_{USCGCON} (X) := F^1_{USCG} (X) \cap F_{CON} (X).
 \end{gather*}
Clearly,
\begin{gather*}
F^1_{USCB}(X) \subseteq  F^1_{USCG}(X)   \subseteq  F^1_{USC}(X),\\
 F^1_{USCGCON} (X) \subseteq F^1_{USCCON} (X).
\end{gather*}

Let
$(X,d)$ be a metric space.
 We
use $\bm{H}$ to denote the \emph{\textbf{Hausdorff distance}}
on
 $C(X)$ induced by $d$, i.e.,
\begin{equation} \label{hau}
\bm{H(U,V)}  =   \max\{H^{*}(U,V),\ H^{*}(V,U)\}
\end{equation}
for arbitrary $U,V\in C(X)$,
where
  $$
H^{*}(U,V)=\sup\limits_{u\in U}\,d\, (u,V) =\sup\limits_{u\in U}\inf\limits_{v\in
V}d\, (u,v).
$$

The Hausdorff semi-distance $H^*$ on $C(X)$ can be extended to $C(X) \cup \{\emptyset\}$
as follows:
\[  H^*(M_1, M_2) =\left\{
                \begin{array}{ll}
                 H^*(M_1, M_2) , & \hbox{if} \  M_1, M_2 \in C(X),
                   \\
                  +\infty, & \hbox{if} \  M_1\in C(X)\   \hbox{and} \ M_2=\emptyset,
                   \\
                  0, & \hbox{if}  \  M_1=\emptyset  \hbox{ and } M_2\in C(X)\cup \{\emptyset\}.
                \end{array}
              \right.
 \]
So the
Hausdorff distance
$H$
on $C(X)$ can be extended to $C(X) \cup \{\emptyset\}$
as follows:
\[  H(M_1, M_2) = H^*(M_1, M_2)\vee H^*(M_2, M_1)=\left\{
                \begin{array}{ll}
                 H(M_1, M_2) , & \hbox{if} \  M_1, M_2 \in C(X),
                   \\
                  +\infty, & \hbox{if} \  M_1=\emptyset \   \hbox{and} \ M_2 \in C(X),
                   \\
                  0, & \hbox{if}  \  M_1=M_2=\emptyset.
                \end{array}
              \right.
 \]

If
there is no confusion, we also use $H$ to denote the Hausdorff distance on $C(X\times [0,1])\cup \{\emptyset\}$ induced by $\overline{d}$.

In this paper,
for a metric space $(Y,\rho)$ and a subset $S$ in $Y$,
we still use $\rho$ to denote the induced metric on $S$ by $\rho$.

\begin{re}
{\rm

$\rho$ is said to be a \emph{metric} on $Y$ if $\rho$ is a function from $Y\times Y$ into $\mathbb{R}$
satisfying
positivity, symmetry and triangle inequality. At this time, $(Y, \rho)$ is said to be a metric space.

  $\rho$ is said to be an \emph{extended metric} on $Y$ if $\rho$ is a function from $Y\times Y$ into $\mathbb{R} \cup \{+\infty\} $
satisfying
positivity, symmetry and triangle inequality. At this time, $(Y, \rho)$ is said to be an extended metric space.

We can see that for arbitrary metric space $(X,d)$, the Hausdorff distance $H$ on $K(X)$ induced by $d$ is a metric.
So
the Hausdorff distance $H$ on $K(X\times [0,1])$ induced by $\overline{d}$ on $X\times [0,1]$
is a metric.

The Hausdorff distance $H$ on $C(X)$ induced by $d$ on $X$
is an extended metric, but probably not a metric,
because
$H(A,B)$ could be equal to $+\infty$ for certain metric space $X$ and $A, B \in C(X)$.

The Hausdorff distance $H$ on $C(X)\cup \{\emptyset\}$
is an extended metric. Since for each $x\in X$, $H(\{x\}, \emptyset)=+\infty$,
we have that the Hausdorff distance $H$ on $C(X)\cup \{\emptyset\}$ (respectively, $K(X)\cup \{\emptyset\}$) is not a metric.

Clearly, if $H$ on $C(X)$ induced by $d$
is not a metric, then $H$ on $C(X\times [0,1])$ induced by $\overline{d}$
is also not a metric.
So
the Hausdorff distance $H$ on $C(X\times [0,1])$ induced by $\overline{d}$ on $X\times [0,1]$
 is an extended metric but probably not a metric.

We can see that $H$ on $C(\mathbb{R}^m)$ is an extended metric but not a metric,
and then the same is $H$ on $C(\mathbb{R}^m\times [0,1])$.

When the Hausdorff distance $H$ is a metric, it is also called the Hausdorff metric.
When the Hausdorff distance $H$ is an extended metric, it is also called the Hausdorff extended metric.
In this paper, for simplicity,
 we refer to both the Hausdorff extended metric and the Hausdorff metric as the Hausdorff metric, except 
for the use of the term Hausdorff extended metric 
in this remark.

}
\end{re}

For
$u\in F(X)$,
define
\begin{gather*}
{\rm end}\, u:= \{ (x, t)\in  X \times [0,1]: u(x) \geq t\},
\\
{\rm send}\, u:= \{ (x, t)\in  X \times [0,1]: u(x) \geq t\} \cap  ([u]_0 \times [0,1]).
\end{gather*}
 $
{\rm end}\, u$ and ${\rm send}\, u$
 are called the endograph and the sendograph of $u$, respectively.

\begin{re}\label{bcp}
{\rm
Let $u \in  F(X)$. The following conditions (\romannumeral1)-(\romannumeral3)
are
equivalent:
\\
(\romannumeral1) \ $u\in F_{USC} (X)$;
\\
(\romannumeral2) ${\rm end}\, u$ is closed in $(X\times [0,1],   \overline{d})$;
\\
(\romannumeral3) ${\rm send}\, u$ is closed in $(X\times [0,1],  \overline{d})$.

(\romannumeral1)$\Rightarrow$(\romannumeral2). Assume that (\romannumeral1) is true.
To show that (\romannumeral2) is true, let $\{(x_n, \al_n)\}$ be a sequence
in ${\rm end}\, u$ which converges to $(x,\al)$ in $X\times [0,1]$, we only need to show that $(x,\al) \in {\rm end}\, u$.
Since
 $u$ is upper semi-continuous, then
$u(x) \geq \limsup_{n\to\infty} u(x_n) \geq \lim_{n\to\infty} \al_n = \al$. Thus $(x,\al) \in {\rm end}\, u$.
So (\romannumeral2) is true.

(\romannumeral2)$\Rightarrow$(\romannumeral3).
Assume that (\romannumeral2) is true.
Note that $[u]_0 \times [0,1]$ is closed in $X\times [0,1]$,
then
${\rm send}\, u =  {\rm end}\, u  \cap ([u]_0 \times [0,1])$ is closed in $X\times [0,1]$.
So
(\romannumeral3) is true.

(\romannumeral3)$\Rightarrow$(\romannumeral1).
 Assume that (\romannumeral3) is true.
To show that (\romannumeral1) is true,
let $\al\in [0,1]$ and suppose that $\{x_n\}$ is
 a sequence in $[u]_\al$ which converges to $x$ in $X$,
 we only need to show that $x\in [u]_\al$.
Note that
$\{(x_n, \al)\}$ converges to $(x,\al)$ in $X\times[0,1]$,
and that
the sequence $\{(x_n,\al)\}$ is in ${\rm send}\, u$.
Hence from the closedness of ${\rm send}\, u$, it follows that $(x,\al) \in {\rm send}\, u$,
which means that $x\in [u]_\al$.
So
(\romannumeral1) is true.

}
 \end{re}

Let $u\in F(X)$. Clearly $X\times\{0\} \subseteq {\rm end}\, u$. So
${\rm end}\, u \not=\emptyset$. Thus
by Remark \ref{bcp}, $u\in F_{USC}(X)$ if and only if
${\rm end}\, u\in C(X \times [0,1])$.

We can see that the conditions (\rmn1) $u=\emptyset_{F(X)}$,
(\rmn2) ${\rm send}\, u = \emptyset$, and (\rmn3) ${\rm end}\, u  = X\times\{0\}$,
are equivalent.

Kloeden \cite{kloeden2} introduced the \emph{\textbf{endograph metric}} \bm{$H_{\rm end}$}.
For $u,v \in F_{USC}(X)$,
\begin{gather*}
\bm{  H_{\rm end}(u,v)    }: =  H({\rm end}\, u,  {\rm end}\, v ),
  \end{gather*}
where
 $H$
is
the Hausdorff
metric on $C(X \times [0,1])$ induced by $\overline{d}$ on $X \times [0,1]$.

Rojas-Medar and Rom\'{a}n-Flores \cite{rojas} introduced
the \bm{$\Gamma$}\emph{\textbf{-convergence}} of a sequence
of upper semi-continuous fuzzy sets based on the Kuratowski convergence
of a sequence of sets in a metric space.

Let $(X,d)$ be a metric space.
Let $C$ be a set in $X$ and
$\{C_n\}$ a sequence of sets in $X$.
 $\{C_n\}$ is said to \emph{\textbf{Kuratowski converge}} to
$C$ according to $(X,d)$, if
$$
C
=
\liminf_{n\rightarrow \infty} C_{n}
=
\limsup_{n\rightarrow \infty} C_{n},
$$
where
\begin{gather*}
\liminf_{n\rightarrow \infty} C_{n}
 =
 \{x\in X: \  x=\lim\limits_{n\rightarrow \infty}x_{n},    x_{n}\in C_{n}\},
\\
\limsup_{n\rightarrow \infty} C_{n}
=
\{
 x\in X : \
 x=\lim\limits_{j\rightarrow \infty}x_{n_{j}},x_{n_{j}}\in C_{n_j}
\}
 =
 \bigcap\limits_{n=1}^{\infty}   \overline{   \bigcup\limits_{m\geq n}C_{m}    }.
\end{gather*}
In this case, we'll write
\bm{  $C=\lim^{(K)}_{n\to\infty}C_n    $ } according to $(X,d)$.
If
there is no confusion, we will not emphasize
the metric space
$(X,d)$ and write  $\{C_n\}$ \emph{\textbf{Kuratowski converges}} to
$C$ or \bm{  $C=\lim^{(K)}_{n\to\infty}C_n    $ } for simplicity.

\begin{re} \label{ksc}
{\rm
 Theorem 5.2.10 in \cite{beer} pointed out that, in a first countable Hausdorff topological space, a sequence of sets is Kuratowski convergent is
equivalent to this sequence is Fell topology convergent.
A metric space is of course a first countable Hausdorff topological space.

Definition 3.1.4 in \cite{kle} gives the definitions of
$\liminf C_{n}$, $\limsup C_{n}$
 and
$\lim C_{n}$
for
 a net of subsets $\{C_n, n\in D\}$ in a topological space.
When $\{C_n, n=1,2,\ldots\}$ is
 a sequence of subsets of a metric space,
$\liminf C_{n}$, $\limsup C_{n}$
 and
$\lim C_{n}$
according to Definition 3.1.4 in \cite{kle}
are
$\liminf_{n\rightarrow \infty} C_{n}$, $\limsup_{n\rightarrow \infty} C_{n}$
and $\lim^{(K)}_{n\to\infty}C_n  $
according to
 the above definitions, respectively.

  }
\end{re}

let $u$, $u_n$, $n=1,2,\ldots$, be fuzzy sets in $F(X)$.
   $\{u_n\}$ is said to \bm{$\Gamma$}-\emph{\textbf{converge}}
  to
  $u$, denoted by \bm{$u = \lim_{n\to \infty}^{(\Gamma)}  u_n$},
  if
  ${\rm end}\, u= \lim_{n\to \infty}^{(K)}  {\rm end}\, u_n$
according to
$(X\times [0,1], \overline{d})$.

The following Theorem \ref{hkg}
is an already known conclusion, which is useful in this paper.
We could not find the original file that gave Theorem \ref{hkg}.
 Theorem \ref{hkg} can be shown in a similar fashion to Theorem 4.1 in \cite{huang}.

\begin{tm} \label{hkg}
Suppose that $C$ and $C_n$, $n=1,2,\ldots$ are closed sets in $X$. Then $H(C_n, C) \to 0$ as $n\to\infty$ implies $\lim_{n\to \infty}^{(K)} C_n \, =C$.
\end{tm}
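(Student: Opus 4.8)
The plan is to unwind the definition of Kuratowski convergence and reduce the claim to two set inclusions, each of which follows from the vanishing of one of the two Hausdorff semi-distances $H^*(C_n, C)$ and $H^*(C, C_n)$. By definition, $\lim^{(K)}_{n\to\infty} C_n = C$ means precisely that $\liminf_{n\to\infty} C_n = \limsup_{n\to\infty} C_n = C$, and the inclusion $\liminf_{n\to\infty} C_n \subseteq \limsup_{n\to\infty} C_n$ holds for every sequence of sets. Hence it suffices to establish $C \subseteq \liminf_{n\to\infty} C_n$ and $\limsup_{n\to\infty} C_n \subseteq C$, since together with the trivial inclusion these sandwich both limit sets between $C$ and $C$.

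Before that I would dispose of the degenerate cases forced by the extended-metric conventions. Since $H(C_n, C) \to 0$, the distances $H(C_n, C)$ are finite for all large $n$; but $H$ takes the value $+\infty$ whenever exactly one of its arguments is empty. Thus for large $n$ the sets $C_n$ and $C$ are either all empty or all nonempty. If $C = \emptyset$, then $C_n = \emptyset$ eventually and both Kuratowski limits equal $\emptyset = C$, so the conclusion is immediate; I may therefore assume, after discarding finitely many indices, that $C$ and every $C_n$ are nonempty closed sets.

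For the inclusion $\limsup_{n\to\infty} C_n \subseteq C$, I would take $x \in \limsup_{n\to\infty} C_n$, so that $x = \lim_{j\to\infty} x_{n_j}$ with $x_{n_j} \in C_{n_j}$, and estimate $d(x, C) \le d(x, x_{n_j}) + d(x_{n_j}, C) \le d(x, x_{n_j}) + H^*(C_{n_j}, C)$; letting $j \to \infty$ forces $d(x, C) = 0$, whence $x \in C$ by closedness of $C$. For the reverse inclusion $C \subseteq \liminf_{n\to\infty} C_n$, I would fix $x \in C$ and, using nonemptiness of $C_n$, choose $x_n \in C_n$ with $d(x, x_n) < d(x, C_n) + 1/n \le H^*(C, C_n) + 1/n$; since the right-hand side tends to $0$, the sequence $x_n \to x$ witnesses $x \in \liminf_{n\to\infty} C_n$. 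Combining the two inclusions with $\liminf_{n\to\infty} C_n \subseteq \limsup_{n\to\infty} C_n$ then gives $C = \liminf_{n\to\infty} C_n = \limsup_{n\to\infty} C_n$, i.e. $\lim^{(K)}_{n\to\infty} C_n = C$.

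The argument is essentially routine; the only points requiring care are the bookkeeping of the empty-set cases under the extended metric, so that selecting points $x_n \in C_n$ is legitimate, and the approximate attainment of $d(x, C_n)$ by a genuine point of $C_n$ in the second inclusion. No compactness or completeness of $X$ is invoked — only closedness of $C$ and nonemptiness of the $C_n$ — which is exactly what keeps the statement valid for fuzzy sets in a general metric space.
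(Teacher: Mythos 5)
Your proof is correct. The paper itself does not supply an argument for this theorem---it only remarks that the result can be shown in the same way as Theorem 4.1 of the cited earlier paper---and your proof is exactly the standard one that reference intends: split the Kuratowski limit into the two inclusions $C \subseteq \liminf_{n\to\infty} C_n$ and $\limsup_{n\to\infty} C_n \subseteq C$, and derive each from the vanishing of the corresponding Hausdorff semi-distance. Your explicit treatment of the empty-set cases under the extended metric is a welcome extra precaution that the paper's conventions (where $H(A,\emptyset)=+\infty$ for nonempty $A$) indeed make necessary before points $x_n\in C_n$ can be selected.
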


\begin{re} \label{hmr}
  {\rm

Theorem \ref{hkg} implies that
for a sequence
$\{u_n\}$ in $F_{USC}(X)$ and an element $u$ in $F_{USC}(X)$,
if
$H_{\rm end} (u_n, u) \to 0$ as $n \to \infty$, then $\lim_{n\to\infty}^{(\Gamma)} u_n = u$.
However,
the converse is false. See Example 4.1 in \cite{huang} and Proposition \ref{csm}.

}
\end{re}

\begin{re} \label{gby}
 {\rm
   Let $\{u_n\}$ be a sequence in $F(X)$ and
let $\{v_n\}$ be a subsequence of $\{u_n\}$. We can see that
$$\liminf_{n\to\infty} {\rm end}\, u_n \subseteq \liminf_{n\to\infty} {\rm end}\, v_n \subseteq \limsup_{n\to\infty} {\rm end}\, v_n  \subseteq \limsup_{n\to\infty} {\rm end}\, u_n.$$
So
if there is a $u\in F(X)$ with $\lim_{n\to\infty}^{(\Gamma)} u_n = u$, then $\lim_{n\to\infty}^{(\Gamma)} v_n = u$.

Clearly, $\lim_{n\to\infty}^{(\Gamma)} v_n = u$
does not necessarily imply
 that $\lim_{n\to\infty}^{(\Gamma)} u_n = u$. A simple example is given below.

 For $n=1,2,\ldots$, let
 $v_n= \widehat{1}_{F(\mathbb{R})}$.
 For $n=1,2,\ldots$, define $u_n$ by
 \[
u_n=\left\{
        \begin{array}{ll}
         \widehat{1}_{F(\mathbb{R})}, & n \mbox{ is odd,}\\
          \widehat{3 }_{F(\mathbb{R})}, & n \mbox{ is even.}
        \end{array}
      \right.
 \]
 Then $\{u_n\} \subseteq F_{USCB} (\mathbb{R})$ and $\{v_n\}$ is a subsequence of $\{u_n\}$. We can see that
 $\lim_{n\to\infty}^{(\Gamma)} v_n = \widehat{1}_{F(\mathbb{R})}$. However
 $\lim_{n\to\infty}^{(\Gamma)} u_n $ does not exist
because
 $$\liminf_{n\to\infty} {\rm end}\, u_n = \mathbb{R}\times \{0\}
\subsetneqq {\rm end}\, {\widehat{1}_{F(\mathbb{R})}} \cup {\rm end}\, {\widehat{3}_{F(\mathbb{R})}}
=\limsup_{n\to\infty} {\rm end}\, u_n . $$

 }
 \end{re}

\begin{re}\label{cmu}
  {\rm
Another metric $d'$ on $X \times [0,1]$ is defined
as follows: for each $(x,\al)$, $(y, \beta)$ in $X\times [0,1]$,
$d'((x,\al), (y, \beta)) = \max\{ d(x,y), |\al-\beta| \}$.
Based on $d'$, the corresponding endograph metric on $F_{USC}(X)$ and $\Gamma$-convergence on $F(X)$ can be derived. We denote them by $H'_{\rm end}$ and $\Gamma'$-convergence, respectively.

It's known that (\rmn1)
$d' \leq \overline{d}  \leq 2 d'$ on $X\times [0,1]$,
 (\rmn2)
for each $u,v\in F_{USC}(X)$, $H'_{\rm end}(u,v) \leq H_{\rm end}(u,v) \leq 2H'_{\rm end}(u,v)$, and
(\rmn3)
the $\Gamma$-convergence is equivalent to the
 $\Gamma'$-convergence on $F(X)$ ((\rmn2) and (\rmn3) follows immediately from (\rmn1)).
So
it is easy to see what conclusions in this paper remain true
if ``$\overline{d}$'', ``$H_{\rm end}$'' and ``$\Gamma$-convergence''
are replaced by ``$d'$'', ``$H_{\rm end}'$'' and ``$\Gamma'$-convergence'', respectively.
 Lemma \ref{tbfegnum}, Theorems \ref{rcfegnum}, \ref{tbfegnu},
  \ref{cfegum}, Propositions \ref{hger}, \ref{hge}, \ref{csm}
and so on are such conclusions.
There is no need to list these conclusions  one by one, because it is easy to see.

We use $H'^*$ and $H'$
to denote the Hausdorff semi-distance and the Hausdorff distance on $C(X\times [0,1])\cup\{\emptyset\}$ induced by $d'$, respectively.
It's known that $H'^* \leq H^* \leq 2 H'^*$ and
$H' \leq H \leq 2 H'$ on $C(X\times [0,1])\cup \{\emptyset\}$.
It's easy to see what contents in this paper
remain true
if ``$\overline{d}$'', ``$H_{\rm end}$'', ``$\Gamma$-convergence'', $H^*$ and $H$ on $C(X\times [0,1])\cup\{\emptyset\}$
are replaced by ``$d'$'', ``$H_{\rm end}'$'', ``$\Gamma'$-convergence'', $H'^*$ and $H'$ on $C(X\times [0,1])\cup\{\emptyset\}$, respectively.

The Hausdorff distance $H'$ on $C(X\times [0,1])\cup\{\emptyset\}$
is an extended metric. 
Note that for each $A,B\in C(X\times [0,1])\cup\{\emptyset\}$,
$H(A,B)=+\infty$ if and only if $H'(A,B)=+\infty$.
So for each subset $S$ of $C(X\times [0,1])\cup\{\emptyset\}$,
the restriction of $H$ to $S\times S$ is a metric on $S$
if and only if the restriction of $H'$ to $S\times S$ is a metric on $S$.
}
\end{re}

\section{Representation theorems for various kinds of fuzzy sets}
\label{rcem}

In this section, we give representation theorems for various kinds of fuzzy sets.
These representation theorems
are useful in this paper.

The following representation theorem should be a known conclusion.

\begin{tm} \label{rep}
Let $Y$ be a nonempty set.
If $u\in F(Y)$, then for all
$\al\in (0,1]$,
  $[u]_\al = \cap_{\beta<\al} [u]_\beta.$

Conversely,
suppose that
$\{v_\al: \al \in (0,1]\}$
is
a family of sets in $Y$ with $v_\al = \cap_{\beta<\al} v_\beta$ for all
$\al\in (0,1]$.
Define $u\in F(Y)$ by
$$u(x) := \sup\{  \al:  x\in v_\al    \} $$ for each $x\in Y$.
 Then $u$ is the unique fuzzy set in $Y$ satisfying that
 $[u]_\al = v_\al$ for all $\al\in (0,1]$.
\end{tm}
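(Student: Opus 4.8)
The plan is to prove the two directions separately, treating the forward claim as a quick consequence of the definition of $\al$-cuts and the converse as the substantive construction. For the forward direction, fix $u\in F(Y)$ and $\al\in (0,1]$; I want to show $[u]_\al = \cap_{\beta<\al}[u]_\beta$. The inclusion $[u]_\al \subseteq \cap_{\beta<\al}[u]_\beta$ is immediate because $[u]_\beta \supseteq [u]_\al$ whenever $\beta\le\al$ (smaller threshold, larger cut). For the reverse inclusion, take $x\in \cap_{\beta<\al}[u]_\beta$, so $u(x)\ge \beta$ for every $\beta<\al$; letting $\beta\uparrow\al$ gives $u(x)\ge\al$, i.e. $x\in[u]_\al$. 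Note that here $[u]_\al$ for $\al\in(0,1]$ always means $\{x: u(x)\ge\al\}$, so the $\al=0$ subtlety about topological closure never enters, which is why the theorem is stated for a bare set $Y$ with no metric.

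For the converse, I would first verify that the $u$ defined by $u(x):=\sup\{\al: x\in v_\al\}$ indeed satisfies $[u]_\al = v_\al$ for all $\al\in(0,1]$. The containment $v_\al \subseteq [u]_\al$ is easy: if $x\in v_\al$ then $u(x)=\sup\{\be: x\in v_\be\}\ge\al$. The reverse containment $[u]_\al\subseteq v_\al$ is where the compatibility hypothesis $v_\al=\cap_{\be<\al}v_\be$ does the work. Suppose $u(x)\ge\al$. If there is some $\be\ge\al$ with $x\in v_\be$, then monotonicity of the family (which itself follows from the compatibility condition, since $v_\al=\cap_{\be<\al}v_\be\supseteq v_{\al'}$ for $\al<\al'$) gives $x\in v_\al$. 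The delicate case is the supremum being attained only in the limit: if $x\in v_\gamma$ for all $\gamma<\al$ but possibly not for any $\gamma\ge\al$, then $x\in \cap_{\gamma<\al}v_\gamma = v_\al$ by hypothesis. I expect this last step to be the main obstacle, since it is exactly the point where the left-continuity condition $v_\al=\cap_{\be<\al}v_\be$ is essential and cannot be omitted; I would handle it by splitting into these two cases according to whether the sup $u(x)$ is attained strictly above $\al$, at a value in $[\al,1]$, or only approached from below.

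Finally, for uniqueness, suppose $w\in F(Y)$ also satisfies $[w]_\al=v_\al$ for all $\al\in(0,1]$. Then for every $x\in Y$ we have $w(x)=\sup\{\al\in(0,1]: w(x)\ge\al\}=\sup\{\al\in(0,1]: x\in[w]_\al\}=\sup\{\al: x\in v_\al\}=u(x)$, so $w=u$. The only care needed is the boundary behavior when $w(x)=0$, where the supremum over $\al\in(0,1]$ of an empty set should be read as $0$ to match the convention; I would note this explicitly. Overall the argument is short, and the entire mathematical content concentrates in the reverse containment $[u]_\al\subseteq v_\al$, where the hypothesis $v_\al=\cap_{\be<\al}v_\be$ is used in a way that mirrors the forward direction's computation.
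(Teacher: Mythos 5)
Your proof is correct and follows essentially the same route as the paper's: both directions rest on the monotonicity of the cuts plus the compatibility condition, and your case split in the containment $[u]_\al\subseteq v_\al$ (sup attained at or above $\al$ versus only approached from below) is just a repackaging of the paper's argument, which picks $\beta_n\geq\al-1/n$ with $x\in v_{\beta_n}$ and applies the condition at $\gamma=\sup_n\beta_n\geq\al$. The only nitpick is your parenthetical derivation of monotonicity, which is cleaner stated as $v_{\al'}=\cap_{\beta<\al'}v_\beta\subseteq v_\al$ for $\al<\al'$ (since $v_\al$ is one of the intersected sets) rather than via $\cap_{\beta<\al}v_\beta\supseteq v_{\al'}$.
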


\begin{proof}
Let $u\in F(Y)$ and $\al\in (0,1]$. For each $x\in Y$,
$x\in [u]_\al \Leftrightarrow u(x) \geq \al \Leftrightarrow$ for each $\beta<\al$, $u(x) \geq \beta$
$\Leftrightarrow$ for each $\beta<\al$, $x\in [u]_\beta$.
So
 $[u]_\al = \cap_{\beta<\al} [u]_\beta.$

Conversely,
suppose that
$\{v_\al: \al \in (0,1]\}$
is
a family of sets in $Y$ with $v_\al = \cap_{\beta<\al} v_\beta$ for all
$\al\in (0,1]$.
Let $u\in F(Y)$ defined by
$$u(x) := \sup\{  \al:  x\in v_\al    \} $$ for each $x\in Y$.
Firstly, we show
that for each $\al\in (0,1]$,
$[u]_\al = v_\al$.
To do this, let $\al\in (0,1]$. We only need to verify that $[u]_\al \supseteq v_\al$ and $[u]_\al\subseteq v_\al$.

Let $x\in  v_\al $. Then clearly $u(x) \geq \al$, i.e.
$x\in [u]_\al$.
So $[u]_\al \supseteq v_\al$.

Let $x\in [u]_\al$. Then
$\sup\{ \beta:  x\in v_\beta    \} = u(x)\geq \al$.
Hence
there exists a sequence $\{ \beta_n, \ n=1,2,\ldots\}$ such that
$1\geq\beta_n \geq \al-1/n$ and $x\in v_{\beta_n}$.
Set $\gamma= \sup_{n=1}^{+\infty} \beta_n $. Then $1\geq \gamma\geq \alpha$
and thus $x\in \cap_{n=1}^{+\infty} v_{\beta_n} = v_\gamma \subseteq v_\al$.
So $[u]_\al\subseteq v_\al$.

Now we show the uniqueness of $u$. To do this, assume that
 $v$ is a fuzzy set in $Y$ satisfying that
 $[v]_\al = v_\al$ for all $\al\in (0,1]$. Then for each $x\in Y$,
$$
v(x) = \sup\{  \al:  x\in [v]_\al    \}  = \sup\{  \al:  x\in v_\al    \} = u(x).
$$
So $u=v$.

\end{proof}

\begin{re}
  {\rm

We can't find the original reference which gave Theorem \ref{rep}, so we give a proof here for the self-containing of this paper.
Theorem \ref{rep} and its proof are essentially the same as the Theorem 7.10 in P27 of chinaXiv:202110.00083v4
and its proof since
the uniqueness of $u$ is obvious.

}
\end{re}

From Theorem \ref{rep}, it follows immediately below representation theorems for
$F_{USC} (X)$, $F^1_{USC} (X)$, $F_{USCG} (X)$, $F_{CON} (X)$, $F_{USCB} (X)$,
and $F^1_{USCB} (X)$.

\begin{pp} \label{fus}\
  Let $(X,d)$ be a metric space.
If $u\in F_{USC} (X)$ (respectively, $u\in F^1_{USC} (X)$, $u\in F_{USCG} (X)$, $u\in F_{CON} (X)$), then
\\
(\romannumeral1) \ $[u]_\al\in C(X) \cup \{\emptyset\}$ (respectively, $[u]_\al\in C(X)$, $[u]_\al\in K(X) \cup \{\emptyset\}$, $[u]_\al$ is connected in $(X,d)$) for all $\al\in (0,1]$, and
\\
(\romannumeral2) \ $[u]_\al=\bigcap_{\beta<\al}[u]_\beta$ for all $\al\in (0,1]$.

Conversely, suppose that the family of sets $\{v_\al:\al\in (0,1]\}$ satisfies
conditions (\romannumeral1) and (\romannumeral2).
Define $u\in F(X)$ by
$u(x) := \sup\{  \al:  x\in v_\al    \} $ for each $x\in X$.
Then $u$ is the unique fuzzy set
in $X$
satisfying that $[u]_{\al}=v_\al$ for each $\al\in (0,1]$.
Moreover, $u\in F_{USC}(X)$ (respectively, $u\in F^1_{USC} (X)$, $u\in F_{USCG}(X)$, $u\in F_{CON}(X)$).

\end{pp}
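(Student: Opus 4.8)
The plan is to read the proposition off Theorem \ref{rep}, handling all four classes uniformly and isolating the single class-specific subtlety, which occurs at $\al = 0$. For the forward direction, condition (\romannumeral2) is precisely the first assertion of Theorem \ref{rep}, so nothing new is needed there; and condition (\romannumeral1) is immediate from the definition of each class, since the defining cut-property (closedness, nonempty closedness, compactness, or connectedness) is assumed for all $\al$ in $[0,1]$, hence in particular for $\al\in (0,1]$.

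For the converse, I would invoke the second part of Theorem \ref{rep} to conclude that $u(x)=\sup\{\al: x\in v_\al\}$ is the unique fuzzy set in $X$ with $[u]_\al = v_\al$ for all $\al\in (0,1]$; this settles existence, uniqueness, and the cut identities on $(0,1]$ simultaneously. Because $[u]_\al = v_\al$ for $\al\in (0,1]$, condition (\romannumeral1) transfers verbatim to the positive cuts of $u$, which already yields membership in $F_{CON}(X)$, whose definition constrains only $\al\in (0,1]$.

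The only point needing real care—and hence the main obstacle—is the $0$-cut demanded by the upper semicontinuous classes, whose definitions involve $[u]_\al$ for all $\al\in [0,1]$ while the hypotheses only govern $\al\in (0,1]$. Here I would use the identity $[u]_0 = \overline{\{u>0\}} = \overline{\bigcup_{\al\in (0,1]} v_\al}$, valid because $\{u>0\}=\bigcup_{\al\in(0,1]}\{u\geq\al\}=\bigcup_{\al\in(0,1]} v_\al$. This exhibits $[u]_0$ as a topological closure, so it is automatically closed and lies in $C(X)\cup\{\emptyset\}$ with no extra assumption, giving $u\in F_{USC}(X)$. For $F^1_{USC}(X)$, the nonemptiness of any single $v_\al$ forces $[u]_0$ to be nonempty, hence in $C(X)$; and for $F_{USCG}(X)$, the inclusion $F_{USCG}(X)\subseteq F_{USC}(X)$ lets the upper semicontinuity part follow as above, while compactness of the positive cuts is exactly condition (\romannumeral1). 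I expect no difficulty beyond keeping track of which classes require the $\al=0$ case.
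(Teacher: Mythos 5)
Your proposal is correct and follows essentially the same route as the paper's proof: both directions are read off from Theorem \ref{rep}, with condition (\romannumeral1) handled directly from the definitions of the classes. The paper treats only the $F_{USC}(X)$ case and calls the rest routine, so your explicit treatment of the $\al=0$ cut via $[u]_0=\overline{\bigcup_{\al\in(0,1]}v_\al}$ simply fills in a detail the paper leaves implicit.
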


\begin{proof}
  The proof is routine. We only show the case of $F_{USC} (X)$. The other cases can be verified similarly.

If $x\in F_{USC} (X)$, then clearly (\romannumeral1) is true.
From Theorem \ref{rep},
(\romannumeral2) is true.

Conversely, suppose that the family of sets $\{v_\al:\al\in (0,1]\}$ satisfies
conditions (\romannumeral1) and (\romannumeral2).
Define $u\in F(X)$ by
$u(x) := \sup\{  \al:  x\in v_\al    \} $ for each $x\in X$.
Then by Theorem \ref{rep}, $u$ is the unique fuzzy set
in $X$
satisfying that $[u]_{\al}=v_\al$ for each $\al\in (0,1]$.
Since $\{[u]_\al, \al\in (0,1] \}$ satisfies condition (\romannumeral1),
$u\in F_{USC}(X)$.

\end{proof}

\begin{pp} \label{fuscbchre}\
  Let $(X,d)$ be a metric space.
If $u \in F_{USCB} (X)$ (respectively, $u \in F^1_{USCB} (X)$), then
\\
(\romannumeral1) \ $[u]_\al\in K(X) \cup \{\emptyset\}$ (respectively, $[u]_\al\in K(X)$) for all $\al\in [0,1]$,
\\
(\romannumeral2) \ $[u]_\al=\bigcap_{\beta<\al}[u]_\beta$ for all $\al\in (0,1]$, and
\\
(\romannumeral3) \ $[u]_0=\overline{\bigcup_{\beta>0}[u]_\beta}$.

Conversely, suppose that the family of sets $\{v_\al:\al\in [0,1]\}$ satisfies
conditions (\romannumeral1) through (\romannumeral3).
Define $u\in F(X)$ by
$u(x) := \sup\{  \al:  x\in v_\al    \} $ for each $x\in X$.
Then $u$ is the unique fuzzy set
in $X$
satisfying that $[u]_{\al}=v_\al$ for each $\al\in [0,1]$.
Moreover, $u\in F_{USCB}(X)$
(respectively, $u \in F^1_{USCB} (X)$).

\end{pp}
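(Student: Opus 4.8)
The plan is to reduce the whole statement to Proposition \ref{fus} (equivalently Theorem \ref{rep}) applied to the family of positive cuts $\{v_\al:\al\in(0,1]\}$, and to handle the single extra datum, the $0$-cut, separately by means of condition (\romannumeral3). The underlying reason this works is that $F_{USCB}(X)$ differs from $F_{USC}(X)$ only in the extra demand that $[u]_0$ be compact, while the positive cuts are already governed by Proposition \ref{fus}. A small observation I would record at the outset and use repeatedly is that condition (\romannumeral3) forces $v_\al\subseteq v_0$ for every $\al\in(0,1]$, since $v_0=\overline{\bigcup_{\beta>0}v_\beta}\supseteq v_\al$.

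For the forward implication, assume $u\in F_{USCB}(X)$. Condition (\romannumeral2) is immediate from Theorem \ref{rep}. For condition (\romannumeral3), I would note that $\bigcup_{\beta>0}[u]_\beta=\{u>0\}$, so $\overline{\bigcup_{\beta>0}[u]_\beta}=\overline{\{u>0\}}=[u]_0$ by the definition of the $0$-cut. For condition (\romannumeral1), the case $\al=0$ is exactly the defining clause of $F_{USCB}(X)$; for $\al\in(0,1]$, since $u\in F_{USC}(X)$ the cut $[u]_\al$ is closed in $X$ and contained in the compact set $[u]_0$, hence compact or empty as a closed subset of a compact set. In the normal case $u\in F^1_{USCB}(X)$ every cut is moreover nonempty, giving $[u]_\al\in K(X)$.

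For the converse, I would first apply Proposition \ref{fus} to the subfamily $\{v_\al:\al\in(0,1]\}$. Conditions (\romannumeral1) and (\romannumeral2) restrict to precisely the hypotheses of Proposition \ref{fus} (using $K(X)\subseteq C(X)$, and using $v_\al\subseteq v_0$ to drop the harmless index $\beta=0$ from the intersection in (\romannumeral2)), so the fuzzy set $u$ given by $u(x)=\sup\{\al:x\in v_\al\}$ lies in $F_{USC}(X)$ and satisfies $[u]_\al=v_\al$ for all $\al\in(0,1]$; I would also check that enlarging the index set of the supremum from $(0,1]$ to $[0,1]$ leaves $u$ unchanged, since adjoining the index $\al=0$ can contribute only the value $0$ to the supremum. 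It then remains to identify the $0$-cut: from $[u]_\al=v_\al$ for $\al>0$ one gets $\{u>0\}=\bigcup_{\beta>0}v_\beta$, whence $[u]_0=\overline{\bigcup_{\beta>0}v_\beta}=v_0$ by condition (\romannumeral3). Together with condition (\romannumeral1) at $\al=0$ this gives $[u]_0\in K(X)\cup\{\emptyset\}$, so $u\in F_{USCB}(X)$, and in the normal case every $v_\al$ is nonempty so $u\in F^1_{USCB}(X)$. Uniqueness is inherited from Proposition \ref{fus} (or Theorem \ref{rep}), since a fuzzy set is already determined by its positive cuts.

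I do not expect a deep obstacle: the argument is essentially a bootstrap from Proposition \ref{fus}. The only place that needs genuine care is the bookkeeping around the $0$-cut — verifying that the two formulas for $u$ coincide, and that condition (\romannumeral3) is exactly what is needed to pin down $[u]_0=v_0$ — together with the degenerate cases (such as $u=\chi_{\emptyset}$, or some $v_\al$ empty), which are trivial but should be mentioned so that the ``closed subset of a compact set'' argument is invoked only when $[u]_0\neq\emptyset$.
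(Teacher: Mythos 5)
Your proposal is correct and follows essentially the same route as the paper: both reduce the statement to Theorem \ref{rep} (equivalently Proposition \ref{fus}) applied to the positive cuts and then pin down the $0$-cut via condition (\romannumeral3); your version merely spells out the bookkeeping (dropping the index $\beta=0$ from the intersection, matching the two suprema) that the paper labels ``routine.''
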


\begin{proof}
    The proof is routine. We only show the case of $F_{USCB} (X)$. The case of $F^1_{USCB} (X)$ can be verified similarly.

If $x\in F_{USCB} (X)$, then clearly (\romannumeral1) is true.
By Theorem \ref{rep},
(\romannumeral2) is true.
From the definition of $[u]_0$, (\romannumeral3) is true.

Conversely, suppose that the family of sets $\{v_\al:\al\in [0,1]\}$ satisfies
conditions (\romannumeral1) through (\romannumeral3).
Define $u\in F(X)$ by
$u(x) := \sup\{  \al:  x\in v_\al    \} $ for each $x\in X$.
Then by Theorem \ref{rep}, $u$ is the unique fuzzy set
in $X$
satisfying that $[u]_{\al}=v_\al$ for each $\al\in (0,1]$.
Clearly $[u]_0 = \overline{\bigcup_{\beta>0}[u]_\beta} = \overline{\bigcup_{\beta>0}v_\beta} = v_0$.
Since $\{[u]_\al, \al\in [0,1] \}$ satisfies condition (\romannumeral1), $u\in F_{USCB}(X)$.

\end{proof}

Similarly,
we can obtain the
representation theorems for $F_{USCCON}(X)$, $F_{USCGCON}(X)$, $F^1_{USCCON}(X)$, etc.

Based on these representation theorems, we can define a fuzzy set or a certain type fuzzy set
by giving the family of its $\al$-cuts.
In the sequel,
we will directly point out that what we defined is a fuzzy set or a certain type fuzzy set without saying which  representation theorem is used since it is easy to see.

\section{Basic properties
of $X\times[0,1]$ \label{bpxe}}

In this section, we give some basic properties
of $X\times[0,1]$, which are useful in this paper.

Below conclusion is well-known.

  Let $(Y,\tau_1)$ and $(Z, \tau_2)$ be topological space, and let
$f$ be a continuous function from $(Y,\tau_1)$ to $(Z, \tau_2)$.
 If
$A$ is compact in $(Y,\tau_1)$, then $f(A)$ is compact in $(Z, \tau_2)$.

Let \bm{$p_X$} be the projection mapping from $(X\times [0,1], \overline{d})$ to $(X, d)$;
that is,
$$\mbox{for each $(x,\al) \in  X\times [0,1]$,\ $p_X (x,\al) =x$.}$$
Clearly for $(x,\al), (y,\beta)\in  X\times [0,1]$,
$d(p_X(x,\al), p_X(y,\beta)) = d(x,y) \leq \overline{d} ( (x,\al), (y,\beta)) $.
So $p_X$ is continuous.

For $u \subseteq  X \times [0,1]$ and $\al\in [0,1]$,
define
 $\bm{\langle u \rangle_\al} := \{ x:   (x,\al) \in u\}$.

Let $u \subseteq  X \times [0,1]$. Then
$p_X(u) = \cup_{\al\in [0,1]} \langle u \rangle_\al$.
Clearly
$$
u\not=\emptyset \Leftrightarrow p_X(u) \not= \emptyset
\Leftrightarrow \cup_{\al\in [0,1]} \langle u \rangle_\al\not=\emptyset.
$$
If $u \in  K(X \times [0,1])$, then $p_X(u) \not= \emptyset$
and $p_X(u)$ is compact in $(X,d)$; that is,
$p_X(u)\in K(X)$.

\begin{pp}
  \label{bpe}
Let $u$ be a subset of $X\times [0,1]$.
\\
(\romannumeral1)
If $u\in C(X\times [0,1])$, then $\langle u \rangle_\al \in C(X) \cup \{\emptyset\}$ for all $\al\in [0,1]$.
\\
(\romannumeral2) $u\in K(X\times [0,1])$ if and only if $u\in C(X\times [0,1])$
and
$\bigcup_{\al\in [0,1]} \langle u \rangle_\al \in K(X)$.
\\
(\romannumeral3)
If $u\in K(X\times [0,1])$, then $\langle u \rangle_\al \in K(X) \cup \{\emptyset\}$ for all $\al\in [0,1]$.
\\
(\romannumeral4)
Even if $u\not= \emptyset$, the converse of (\rmn1) and the converse of (\rmn3) are false.
\end{pp}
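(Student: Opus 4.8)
The plan is to handle the four parts in the order (\romannumeral1), (\romannumeral3), (\romannumeral2), (\romannumeral4), since each feeds the next. For (\romannumeral1) I would fix $\al\in[0,1]$ and introduce the map $j_\al\colon X\to X\times[0,1]$, $j_\al(x)=(x,\al)$. Because $\overline{d}((x,\al),(y,\al))=d(x,y)$, the map $j_\al$ is an isometric embedding, in particular continuous, and one checks directly that $\langle u\rangle_\al=j_\al^{-1}(u)$. Hence if $u\in C(X\times[0,1])$, then $\langle u\rangle_\al$ is the preimage of a closed set under a continuous map, so it is closed in $X$; it therefore lies in $C(X)$ when nonempty and equals $\emptyset$ otherwise. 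Equivalently, one may note $\langle u\rangle_\al\times\{\al\}=u\cap(X\times\{\al\})$ is closed while $j_\al$ is a homeomorphism onto $X\times\{\al\}$.

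For (\romannumeral3), I would assume $u\in K(X\times[0,1])$. Then $u$ is closed, so by (\romannumeral1) each $\langle u\rangle_\al$ is closed in $X$. Moreover $\langle u\rangle_\al\subseteq p_X(u)=\bigcup_{\beta\in[0,1]}\langle u\rangle_\beta$, and it was already noted before the statement that $p_X(u)\in K(X)$. Thus $\langle u\rangle_\al$ is a closed subset of $X$ contained in the compact set $p_X(u)$, hence compact, so $\langle u\rangle_\al\in K(X)\cup\{\emptyset\}$. For (\romannumeral2), the forward implication is immediate: a compact subset of the metric space $X\times[0,1]$ is closed, giving $u\in C(X\times[0,1])$, and $\bigcup_\al\langle u\rangle_\al=p_X(u)\in K(X)$ by the same pre-statement remark. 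For the converse I would suppose $u\in C(X\times[0,1])$ and $p_X(u)=\bigcup_\al\langle u\rangle_\al\in K(X)$. Then $u\subseteq p_X(u)\times[0,1]$, and since $\overline{d}$ induces the product topology on $X\times[0,1]$ while $p_X(u)$ and $[0,1]$ are compact, the product $p_X(u)\times[0,1]$ is compact; as $u$ is closed in $X\times[0,1]$, it is a closed subset of this compact set, hence compact, and it is nonempty because $p_X(u)\neq\emptyset$. Therefore $u\in K(X\times[0,1])$.

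For (\romannumeral4) I would exhibit a single counterexample in $X=\mathbb{R}$ refuting both converses at once. Take $u=\{(1/n,1/n):n\in\mathbb{N}\}\subseteq\mathbb{R}\times[0,1]$, which is nonempty. For each $\al$ the slice $\langle u\rangle_\al$ equals $\{1/n\}$ if $\al=1/n$ for some $n$ and is empty otherwise, so every slice lies in $K(\mathbb{R})\cup\{\emptyset\}\subseteq C(\mathbb{R})\cup\{\emptyset\}$. However $(1/n,1/n)\to(0,0)\notin u$, so $u\notin C(\mathbb{R}\times[0,1])$ and a fortiori $u\notin K(\mathbb{R}\times[0,1])$; this one set simultaneously defeats the converse of (\romannumeral1) and the converse of (\romannumeral3) even though $u\neq\emptyset$.

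The parts (\romannumeral1) and (\romannumeral3) are routine, reducing respectively to ``preimage of a closed set is closed'' and ``a closed subset of a compact set is compact.'' The step needing genuine care is the converse half of (\romannumeral2): it rests on $p_X(u)\times[0,1]$ being compact, which uses that $\overline{d}$ generates the product topology on $X\times[0,1]$ so that a finite product of compact sets is compact. I expect this to be the main obstacle, together with verifying that the slices of the (\romannumeral4) example are exactly the claimed singletons so that the same $u$ refutes both converses.
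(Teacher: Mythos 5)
Your proposal is correct and follows essentially the same route as the paper: sections handled as closed sets for (\romannumeral1), $u$ realized as a nonempty closed subset of the compact set $\bigl(\bigcup_{\al\in[0,1]}\langle u\rangle_\al\bigr)\times[0,1]$ for (\romannumeral2) and (\romannumeral3), and a nonempty non-closed set with compact sections for (\romannumeral4). The only notable difference is the counterexample in (\romannumeral4): the paper uses a set whose sections are nested intervals with a jump at $\al=1/2$, while your convergent sequence of isolated points is equally valid and somewhat leaner.
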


\begin{proof}

To show (\rmn1), assume that $u\in C(X\times [0,1])$ and $\al\in [0,1]$. To show that $\langle u \rangle_\al$ is closed in $(X,d)$, let $\{x_n\}$ be a sequence
in $\langle u \rangle_\al$ such that $\{x_n\}$ converges to $x$ in $(X,d)$.
We only need
to show
that $x\in \langle u \rangle_\al$.

Note that $\{(x_n,\al)\}$ converges to $(x,\al)$ in $(X\times [0,1], \overline{d})$.
Thus $(x,\al) \in u$. This means that $x\in \langle u \rangle_\al$.
So $\langle u \rangle_\al$ is closed in $(X,d)$; that is, $\langle u \rangle_\al \in C(X) \cup \{\emptyset\}$.
Thus (\rmn1) is true.

Next we show (\rmn2).
\emph{Necessity}.
Assume that $u\in K(X\times [0,1])$.
Then $\bigcup_{\al\in [0,1]} \langle u \rangle_\al = p_X(u)\in K(X)$.
Clearly $u\in C(X\times [0,1])$.
 So the necessity is proved.

\emph{Sufficiency}.
Assume that $u\in C(X\times [0,1])$
and
$\bigcup_{\al\in [0,1]} \langle u \rangle_\al \in K(X)$.
As $[0,1] \in K([0,1])$, we have that
$\bigcup_{\al\in [0,1]} \langle u \rangle_\al \times [0,1]\in K(X\times [0,1])$.
Note that $u$ is a nonempty closed subset of $\bigcup_{\al\in [0,1]} \langle u \rangle_\al \times [0,1]$.
Thus
$u \in K(X\times [0,1])$. So the sufficiency is proved and (\romannumeral2) is true.

To show (\rmn3), assume that $u\in K(X\times [0,1])$ and $\al\in [0,1]$.
Then by (\rmn1) and (\rmn2),
$\bigcup_{\al\in [0,1]} \langle u \rangle_\al \in K(X)$
and
$\langle u \rangle_\al \in C(X)\cup \{\emptyset\}$.
Thus
$\langle u \rangle_\al \in K(X)\cup \{\emptyset\}$
as $\langle u \rangle_\al$ is a closed subset of $\bigcup_{\al\in [0,1]} \langle u\rangle_\al$.
So (\rmn3) is true.

Finally
we show (\rmn4),
define a nonempty subset $u$ of $\mathbb{R}\times[0,1]$ by putting
\[
\langle u \rangle_\al
=\left\{
   \begin{array}{ll}
    [0,1] , & \al\in [0,1/2), \\
 \empty [0,1/2], & \al\in [1/2, 1].
   \end{array}
 \right.
\]
Then for each $\al\in [0,1]$,
$\langle u \rangle_\al \in K(\mathbb{R})\subset C(\mathbb{R})$.

Let $x$ be the point $(1,\frac{1}{2})$ in $\mathbb{R}\times [0,1]$.
For $n=1,2,\ldots$,
let
$x_n$ be the point $(1,\frac{1}{2}-\frac{1}{2n})$. Then $\{x_n\}$ is a sequence in $u$.
Denote the Euclidean metric on $\mathbb{R}$ by $\rho$.
We can see that
$\overline{\rho} (x_n, x) \to 0$.
However $x\notin u$ and so $u\notin C(\mathbb{R}\times [0,1])$.
This indicates that even if $u\not=\emptyset$, the converse of (\rmn1) and the converse of (\rmn3) are not true.
So (\rmn4) is proved.
\end{proof}

Let $c_0, c_1\in[0,1]$ with $0\leq c_0 \leq c_1 \leq 1$ and
let $u$ be a subset of $X\times [c_0, c_1]$.
The following conclusions (\rmn1) and (\rmn2) are obvious. We will use them without mentioning them.

(\rmn1) Clearly, $u\in C(X\times [c_0, c_1])$ if and only if $u\in C(X\times [0,1])$;
$u\in K(X\times [c_0, c_1])$ if and only if $u\in K(X\times [0,1])$.

(\rmn2) As $\langle u \rangle_\al = \emptyset$ for all $\al\in [0,1] \setminus [c_0, c_1]$, it follows that
 $\langle u \rangle_\al \in C(X) \cup \{\emptyset\}$ for all $\al\in [c_0, c_1]$
if and only if
 $\langle u \rangle_\al \in C(X) \cup \{\emptyset\}$ for all $\al\in [0,1]$;
$\langle u \rangle_\al \in K(X) \cup \{\emptyset\}$ for all $\al\in [c_0, c_1]$
if and only if
 $\langle u \rangle_\al \in K(X) \cup \{\emptyset\}$ for all $\al\in [0,1]$.

For $u \subseteq X \times [0,1] $,
define
$\bm{S_u} := \sup\{ \al:  (x,\al) \in u  \}$.
We call $S_u$ the height of $u$.
In this paper we assume that \bm{$\sup\emptyset = 0$}.

\begin{pp}\label{bpu}
  Let $u$ be a subset of $X\times [0,1]$ and $c\in [0,S_u]$. Suppose the following conditions.
\\
(\rmn1)  $u\cap (X\times [c, 1]) \in K(X\times [0,1])$.
\\
(\rmn2)  $ \cup_{\al\in [c,1]}\langle u \rangle_{\al} \in K(X)$ and $u\cap (X\times [c, 1]) \in C(X\times [0,1])$.
\\
(\rmn3) $\langle u \rangle_{S_u} \in K(X)$.
\\
 Then (\rmn1)$\Leftrightarrow$(\rmn2)$\Rightarrow$(\rmn3).
\end{pp}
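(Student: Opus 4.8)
The plan is to reduce the whole statement to the single set $w := u\cap (X\times[c,1])$ and then feed it into Proposition \ref{bpe}. The key preliminary observation, which I would record first, is that since $w$ lives in the band $X\times[c,1]$ we have $\langle w\rangle_\al = \langle u\rangle_\al$ for $\al\in[c,1]$ and $\langle w\rangle_\al=\emptyset$ for $\al\in[0,c)$; consequently $\bigcup_{\al\in[0,1]}\langle w\rangle_\al = \bigcup_{\al\in[c,1]}\langle u\rangle_\al$. I will also use freely the remark preceding this proposition, namely that closedness/compactness of $w$ in $X\times[c,1]$ and in $X\times[0,1]$ coincide.

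For the equivalence (\rmn1)$\Leftrightarrow$(\rmn2) I would simply apply Proposition \ref{bpe}(\rmn2) to $w$, regarded as a subset of $X\times[0,1]$. That proposition asserts $w\in K(X\times[0,1])$ if and only if $w\in C(X\times[0,1])$ and $\bigcup_{\al\in[0,1]}\langle w\rangle_\al\in K(X)$. Rewriting the union as $\bigcup_{\al\in[c,1]}\langle u\rangle_\al$ by the observation above turns this verbatim into (\rmn1)$\Leftrightarrow$(\rmn2), so this direction is essentially a citation.

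For (\rmn2)$\Rightarrow$(\rmn3) (equivalently (\rmn1)$\Rightarrow$(\rmn3)) I would assume (\rmn1), so that $w$ is a \emph{nonempty} compact subset of $X\times[0,1]$. Since the second-coordinate map $(x,\al)\mapsto\al$ is continuous, it attains its maximum $S_w$ on the compact set $w$. I would first prove $S_w=S_u$: the inclusion $w\subseteq u$ gives $S_w\leq S_u$, while nonemptiness of $w$ forces $S_w\geq c$, so any $(x,\al)\in u$ with $\al>S_w$ would satisfy $\al>c$ and hence lie in $w$, contradicting the definition of $S_w$; thus $S_w\geq S_u$ as well. Because the maximum is attained, $\langle w\rangle_{S_w}\neq\emptyset$, and since $S_u=S_w\geq c$ we get $\langle u\rangle_{S_u}=\langle w\rangle_{S_u}\neq\emptyset$. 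Finally, Proposition \ref{bpe}(\rmn3) applied to the compact set $w$ yields $\langle w\rangle_{S_u}\in K(X)\cup\{\emptyset\}$, and nonemptiness upgrades this to $\langle u\rangle_{S_u}\in K(X)$, which is (\rmn3).

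The only nontrivial point, and the one I expect to be the crux, is the combination $S_w=S_u$ together with the fact that the supremum defining $S_u$ is actually \emph{attained} at the level $\langle u\rangle_{S_u}$. Without compactness the height of a set need not be realized by a genuine point, so it is precisely condition (\rmn1) that guarantees $\langle u\rangle_{S_u}\neq\emptyset$; everything else is bookkeeping with level sets and a direct appeal to Proposition \ref{bpe}.
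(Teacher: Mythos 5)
Your proposal is correct and follows essentially the same route as the paper: both reduce everything to $w=u\cap(X\times[c,1])$ and cite Proposition \ref{bpe}(\rmn2) verbatim for (\rmn1)$\Leftrightarrow$(\rmn2), and both combine Proposition \ref{bpe}(\rmn3) with compactness of $w$ to show the height is actually attained, which is the crux of (\rmn3). The only cosmetic difference is that you establish attainment via the extreme value theorem for the second-coordinate projection (after checking $S_w=S_u$), whereas the paper runs the equivalent sequential-compactness argument by contradiction.
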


\begin{proof}
Set $v:= u\cap (X\times [c, 1]) $. By Proposition \ref{bpe} (\rmn2),
 $v \in K(X\times [0,1])$
if and only if
$v\in C(X\times [0,1])$ and $\cup_{\al\in [0,1]}\langle v \rangle_{\al}   =\cup_{\al\in [c,1]}\langle u \rangle_{\al} \in K(X)$.
So (\rmn1)$\Leftrightarrow$(\rmn2).

To complete the proof, we only need to prove (\rmn1)$\Rightarrow$(\rmn3).
Suppose that (\rmn1) is true. By
 Proposition \ref{bpe} (\rmn3),
 $\langle u \rangle_{S_u}\in K(X) \cup \{\emptyset\}$.
So to show (\rmn3), we only need to show that
$\langle u \rangle_{S_u} \not= \emptyset$. We prove by contradiction.

Assume that $\langle u \rangle_{S_u} = \emptyset$.
Then $c<S_u$ and there is a sequence $\{(x_n, \al_n)\}$ in $u\cap (X\times [c, S_u))$ such that $\al_n\to S_u$ as $n\to\infty$.
From (\rmn1), there is a subsequence $\{(x_{n_k}, \al_{n_k})\}$
of $\{(x_n, \al_n)\}$ which converges to
$(x,\al)$ in $(u\cap (X\times [c, 1]), \overline{d})$.
Hence $\al=S_u$ and then $\langle u \rangle_{S_u} \not= \emptyset$.
This is a contradiction. Thus $\langle u \rangle_{S_u} \not= \emptyset$,
 and (\rmn3) is true. So (\rmn1)$\Rightarrow$(\rmn3). This completes the proof.

The conclusion that (\rmn1)$\Leftrightarrow$(\rmn2)
is equivalent to Proposition \ref{bpe} (\rmn2).
 Above we show that Proposition \ref{bpe} (\rmn2) implies
that (\rmn1)$\Leftrightarrow$(\rmn2). Conversely,
set $c=0$. Then from (\rmn1)$\Leftrightarrow$(\rmn2), we obtain
 Proposition \ref{bpe} (\rmn2).

\end{proof}

\begin{re} \label{bre}
  {\rm The conclusions in this remark are easy to obtain. We will often use them without citation.

Let $c_0, c_1, r_0,r_1 \in [0,1]$ with $r_0\leq r_1$ and $c_0\leq c_1$.
Let $u$ be a subset of $X\times [c_0, c_1]$.
\\
(\rmn1)
Clearly for each
$\al\in [0,1]  \setminus [c_0, S_u]$,
$ \langle u \rangle_\al = \emptyset$.
\\
(\rmn2)
Suppose the following conditions
(\rmn2-1) $u\in C(X\times [0,1])$,
(\rmn2-2)
 $u\cap (X\times [r_0,r_1])\in C(X\times [0,1])$,
(\rmn2-3) $u\in K(X\times [0,1])$,
(\rmn2-4)
 $u\cap (X\times [r_0,r_1])\in K(X\times [0,1])$.
Then
(\rmn2-1)$\Rightarrow$(\rmn2-2) and (\rmn2-3)$\Rightarrow$(\rmn2-4).

As $X\times [r_0,r_1]\in C(X\times [0,1])$, we have that (\rmn2-1)$\Rightarrow$(\rmn2-2). Assume that (\rmn2-3) is true. Then $u\cap (X\times [r_0,r_1])\in C(X\times [0,1])$. This means that $u\cap (X\times [r_0,r_1])$ is a closed subset of $u$. So (\rmn2-3)$\Rightarrow$(\rmn2-4).

Clearly,
(\rmn2-2) does not necessarily  imply (\rmn2-1); (\rmn2-4) does not necessarily imply (\rmn2-3).

}
\end{re}

\begin{pp} \label{bpue}
Let $c_0, c_1\in[0,1]$ with $0\leq c_0 \leq c_1 \leq 1$. Let $u$ be a nonempty subset of $X\times [c_0, c_1]$ satisfying that
$\langle u\rangle_\xi \subseteq \langle u\rangle_\eta$ for all $\xi,\eta \in [c_0, c_1]$ with $\eta\leq \xi$.
\\
(\rmn1) $\langle u \rangle_{c_0} =\cup_{\gamma\in [c_0, c_1]} \langle u\rangle_\gamma =p_X(u) \not= \emptyset$.
\\
(\rmn2) Suppose that $u\in C(X\times [0,1])$ and that there is a $c_2\in [c_0,S_u]$ with $ \langle u \rangle_{c_2} \in K(X)$.
(\rmn2-1) Let $\al\in [0,1]$. Then
$ \langle u \rangle_\al \not= \emptyset$
if and only if
$\al\in [c_0, S_u]$;
(\rmn2-2) for each $\al\in [c_2, S_u]$, $\langle u \rangle_\al \in K(X)$.
\\
(\rmn3) Suppose that $u\in K(X\times [0,1])$. Then
 for each $\al\in [c_0, S_u]$, $ \langle u \rangle_\al \in K(X)$.
\\
(\rmn4) Suppose that there is a $c_2\in [c_0,S_u]$
such that $u\cap (X\times [c_2, 1])\in K(X\times [0,1])$.
 (\rmn4-1) Let $\al\in [0,1]$. Then
$ \langle u \rangle_\al \not= \emptyset$
if and only if
$\al\in [c_0, S_u]$;
(\rmn4-2) for each $\al\in [c_2, S_u]$, $\langle u \rangle_\al \in K(X)$.

\end{pp}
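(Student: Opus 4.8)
The plan is to establish the four parts in order, with the nestedness hypothesis ``$\langle u\rangle_\xi\subseteq\langle u\rangle_\eta$ whenever $\eta\le\xi$'' doing most of the work and Proposition \ref{bpe} supplying the closedness and compactness of individual cuts.

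Part (\rmn1) is pure monotonicity: every $\gamma\in[c_0,c_1]$ has $c_0\le\gamma$, so $\langle u\rangle_\gamma\subseteq\langle u\rangle_{c_0}$ and hence $\bigcup_{\gamma\in[c_0,c_1]}\langle u\rangle_\gamma=\langle u\rangle_{c_0}$; since $u\subseteq X\times[c_0,c_1]$ forces all cuts outside $[c_0,c_1]$ to be empty, this union is exactly $p_X(u)$, which is nonempty because $u\neq\emptyset$. For the ``only if'' half of (\rmn2-1) I note that a nonempty $\langle u\rangle_\al$ gives a point $(x,\al)\in u$, whence $\al\ge c_0$ (as $u\subseteq X\times[c_0,c_1]$) and $\al\le S_u$ (by definition of $S_u$), so $\al\in[c_0,S_u]$.

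The ``if'' half of (\rmn2-1) splits on the value of $\al$. When $c_0\le\al<S_u$, the definition of the supremum produces $\beta\in(\al,S_u]$ with $\langle u\rangle_\beta\neq\emptyset$, and nestedness gives $\langle u\rangle_\al\supseteq\langle u\rangle_\beta\neq\emptyset$; this uses neither closedness nor compactness. The remaining case $\al=S_u$ is the crux of the whole proposition. If $c_2=S_u$ it is immediate since $\langle u\rangle_{S_u}=\langle u\rangle_{c_2}\in K(X)$. If $c_2<S_u$, I choose $(x_n,\al_n)\in u$ with $\al_n\to S_u$ and, discarding finitely many terms, $\al_n\ge c_2$; then nestedness places each $x_n$ in the compact set $\langle u\rangle_{c_2}$, so a subsequence $x_{n_k}\to x$ converges, giving $(x_{n_k},\al_{n_k})\to(x,S_u)$, and the closedness of $u$ forces $(x,S_u)\in u$, i.e.\ $\langle u\rangle_{S_u}\neq\emptyset$. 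For (\rmn2-2), fix $\al\in[c_2,S_u]$: nestedness gives $\langle u\rangle_\al\subseteq\langle u\rangle_{c_2}$ (compact), Proposition \ref{bpe}(\rmn1) gives that $\langle u\rangle_\al$ is closed, and (\rmn2-1) gives that it is nonempty, so it is a nonempty closed subset of a compact set and therefore lies in $K(X)$.

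Parts (\rmn3) and (\rmn4) reduce to (\rmn2). For (\rmn3), $u\in K(X\times[0,1])$ is in particular closed, and by part (\rmn1) together with Proposition \ref{bpe}(\rmn3) the cut $\langle u\rangle_{c_0}=p_X(u)$ is a nonempty element of $K(X)$; so (\rmn2) applies with $c_2=c_0$ and (\rmn2-2) yields $\langle u\rangle_\al\in K(X)$ for all $\al\in[c_0,S_u]$. For (\rmn4), set $v:=u\cap(X\times[c_2,1])$; then $v\in K(X\times[0,1])$ by hypothesis, $v$ inherits nestedness, $\langle v\rangle_\al=\langle u\rangle_\al$ for $\al\ge c_2$, and $S_v=S_u$ (the points of $u$ witnessing levels near $S_u$ survive the cut because $c_2\le S_u$). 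Applying part (\rmn3) to $v$ with $c_2$ in the role of $c_0$ gives $\langle u\rangle_\al=\langle v\rangle_\al\in K(X)$ for every $\al\in[c_2,S_u]$, which is (\rmn4-2) and in particular shows $\langle u\rangle_{S_u}\neq\emptyset$; then (\rmn4-1) follows exactly as (\rmn2-1), the sub-$S_u$ case by the supremum/nestedness argument and the top case $\al=S_u$ by (\rmn4-2). The main obstacle is precisely the top-cut nonemptiness $\langle u\rangle_{S_u}\neq\emptyset$ in (\rmn2-1): it is the only step requiring a genuine limiting argument, and it is where closedness of $u$ and compactness of $\langle u\rangle_{c_2}$ must be used together.
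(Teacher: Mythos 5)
Your proof is correct, but it organizes the argument in the opposite direction from the paper and replaces its key lemma with an inline argument. The paper proves (\rmn4) first by invoking Proposition \ref{bpu}: the hypothesis $u\cap (X\times [c_2,1])\in K(X\times[0,1])$ feeds into that proposition, whose proof contains the limiting argument showing $\langle u\rangle_{S_u}\in K(X)$; it then deduces (\rmn2) from (\rmn4) by using Proposition \ref{bpu} again to upgrade ``$u$ closed and $\langle u\rangle_{c_2}\in K(X)$'' to ``$u\cap(X\times[c_2,1])$ compact'', and finally deduces (\rmn3) from (\rmn2) via Proposition \ref{bpe}(\rmn2). You instead prove (\rmn2) directly -- extracting a convergent subsequence inside the single compact cut $\langle u\rangle_{c_2}$ and using closedness of $u$ to land a point in $\langle u\rangle_{S_u}$ -- and then derive (\rmn3) from (\rmn2) and (\rmn4) from (\rmn3). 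Both orderings are legitimate (the paper itself remarks that (\rmn2), (\rmn3) and (\rmn4) are mutually equivalent), and there is no circularity in yours since (\rmn2) is established independently. What your route buys is self-containment: the only genuinely analytic step, the nonemptiness of the top cut, is isolated and proved with bare sequential compactness of a cut rather than compactness of the truncated set $u\cap(X\times[c_2,1])$, so Proposition \ref{bpu} is never needed. What the paper's route buys is economy -- it reuses machinery already proved and makes the equivalence of the three compactness hypotheses transparent. One small point worth making explicit in your part (\rmn4): when $c_2=S_u$ the claim $S_v=S_u$ does not follow from ``levels near $S_u$ surviving the cut'' but from the standing hypothesis that $v=u\cap(X\times[S_u,1])$ is a member of $K(X\times[0,1])$ and hence nonempty; this is harmless but should be stated.
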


\begin{proof}
Clearly (\rmn1) is true.
Next we show (\rmn4). Set $v:=u\cap (X\times [c_2, 1])$. As $v\in K(X\times [0,1])$,
by Proposition
\ref{bpu}, $\langle u \rangle_{S_u} \in K(X)$.
So $\langle u \rangle_{S_u} \not= \emptyset$ and hence
for each $\al\in [c_0, S_u]$,
 $\langle u \rangle_{\alpha}\not=\emptyset$; whence, by
Remark \ref{bre}(\rmn1),
 (\rmn4-1) is true.
By
 Proposition \ref{bpe}(\rmn3), for each $\al\in [c_2, S_u]$,
 $\langle u \rangle_{\alpha}=\langle v \rangle_{\alpha} \in K(X) \cup \{\emptyset\}$.
Also by (\rmn4-1),
 for each $\al\in [c_2, S_u]$,
 $\langle u \rangle_{\alpha} \not= \emptyset$.
Thus (\rmn4-2) is true.
So (\rmn4) is proved.

Now we show (\rmn2).
As $u\in C(X\times [0,1])$, clearly
$u\cap (X\times [c_2, 1])\in C(X\times [0,1])$.
Since
$ \langle u \rangle_{c_2} = \cup_{\al\in [c_2,1]}\langle u \rangle_{\al} \in K(X)$,
by Proposition \ref{bpu},
 $u\cap (X\times [c_2, 1])\in K(X\times [0,1])$.
Thus (\rmn2) follows from (\rmn4).

Finally we show (\rmn3).
By Proposition \ref{bpe}(\rmn2),
$u\in K(X\times [0,1])$ implies that
$u\in C(X\times [0,1])$ and $\langle u \rangle_{c_0}  = \cup_{\al\in [0,1]} \langle u \rangle_\al \in K(X)$.
So by (\rmn2-2), (\rmn3) is true.
This completes the proof.

We can see that (\rmn3) can also be implied by (\rmn4).
Indeed (\rmn2), (\rmn3) and (\rmn4) are equivalent to each other.
Above we show that (\rmn4) implies (\rmn2),
 (\rmn2) implies (\rmn3).
Below we show that
(\rmn3) implies (\rmn4).

If $u\cap (X\times [c_2, 1])\in K(X\times [0,1])$,
then
by (\rmn3), (\rmn4-2) is true.
So $ \langle u \rangle_{S_u} \not= \emptyset$
and from this we can deduce that (iv-1) is true.
Thus (\rmn3) implies (\rmn4).

\end{proof}

Clearly for each $r\in \mathbb{R}$,
 $(r,r]=\{x: r<x\leq r\}=\emptyset$.

\begin{pp}\label{bpec}
Let $c_0, c_1\in[0,1]$ with $0\leq c_0 \leq c_1 \leq 1$. Let $u$ be a nonempty subset of $X\times [c_0, c_1]$ satisfying that
for each $\al\in (c_0, c_1]$, $\langle u\rangle_\al = \bigcap_{\beta\in [c_0, \al)} \langle u\rangle_\beta$. Then
\\
(\rmn1) For each $\xi,\eta \in [c_0, c_1]$ with $\eta\leq \xi$, $\langle u \rangle_\xi\subseteq \langle u \rangle_\eta$.
 And $\langle u \rangle_{c_0} \not= \emptyset$.
\\
(\rmn2)
$u\in C(X\times [c_0,c_1])$ if and only if $\langle u \rangle_\al \in C(X) \cup \{\emptyset\}$ for all $\al\in [c_0,c_1]$.
\\
(\rmn3) $u\in K(X\times [c_0,c_1])$ if and only if $\langle u \rangle_\al \in K(X) \cup \{\emptyset\}$ for all $\al\in [c_0,c_1]$.
\end{pp}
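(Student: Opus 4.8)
The plan is to handle the three parts in order, using the left-continuity hypothesis $\langle u\rangle_\al=\bigcap_{\beta\in[c_0,\al)}\langle u\rangle_\beta$ first to extract monotonicity of the cuts and then to ``repair'' the converses of Proposition \ref{bpe} (\rmn1) and (\rmn3), which fail in general by Proposition \ref{bpe} (\rmn4). Throughout I would freely switch between $X\times[c_0,c_1]$ and $X\times[0,1]$, since closedness (resp.\ compactness) in the two coincides and since $\langle u\rangle_\al=\emptyset$ for $\al\notin[c_0,c_1]$.

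For (\rmn1), I would read the monotonicity straight off the hypothesis: if $c_0\le\eta\le\xi\le c_1$ with $\xi>c_0$, then $\xi\in(c_0,c_1]$, so $\langle u\rangle_\xi=\bigcap_{\beta\in[c_0,\xi)}\langle u\rangle_\beta$; when $\eta<\xi$ the index $\eta$ lies in $[c_0,\xi)$, so $\langle u\rangle_\eta$ is one of the factors of this intersection and $\langle u\rangle_\xi\subseteq\langle u\rangle_\eta$ (the cases $\eta=\xi$ and $\xi=c_0$ being trivial). Once monotonicity holds, $u$ meets the hypothesis of Proposition \ref{bpue}, and since $u\ne\emptyset$, Proposition \ref{bpue} (\rmn1) gives $\langle u\rangle_{c_0}=p_X(u)\ne\emptyset$.

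For (\rmn2), the forward implication is Proposition \ref{bpe} (\rmn1). The substantive direction is the converse, and I expect it to be the main obstacle, as it is exactly the step absent from the counterexample in Proposition \ref{bpe} (\rmn4). Assuming every $\langle u\rangle_\al\in C(X)\cup\{\emptyset\}$, I would take $(x_n,\al_n)\in u$ converging to $(x,\al)$ and show $x\in\langle u\rangle_\al$. If $\al>c_0$, by left-continuity it suffices to prove $x\in\langle u\rangle_\beta$ for each $\beta\in[c_0,\al)$; fixing such a $\beta$, we have $\al_n>\beta$ for all large $n$, so $x_n\in\langle u\rangle_{\al_n}\subseteq\langle u\rangle_\beta$ by the monotonicity from (\rmn1), and closedness of $\langle u\rangle_\beta$ forces $x\in\langle u\rangle_\beta$. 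If $\al=c_0$, then $\al_n\ge c_0$ gives $x_n\in\langle u\rangle_{\al_n}\subseteq\langle u\rangle_{c_0}$, and closedness of $\langle u\rangle_{c_0}$ (nonempty by (\rmn1)) yields $x\in\langle u\rangle_{c_0}=\langle u\rangle_\al$.

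For (\rmn3), the forward implication is Proposition \ref{bpe} (\rmn3). For the converse, assume $\langle u\rangle_\al\in K(X)\cup\{\emptyset\}$ for all $\al$. Since $K(X)\cup\{\emptyset\}\subseteq C(X)\cup\{\emptyset\}$, part (\rmn2) already gives $u\in C(X\times[c_0,c_1])=C(X\times[0,1])$. By the monotonicity from (\rmn1) together with Proposition \ref{bpue} (\rmn1), $\bigcup_{\al\in[0,1]}\langle u\rangle_\al=\langle u\rangle_{c_0}$, which is nonempty by (\rmn1) and hence lies in $K(X)$ by hypothesis. Then Proposition \ref{bpe} (\rmn2) delivers $u\in K(X\times[0,1])=K(X\times[c_0,c_1])$, finishing the proof.
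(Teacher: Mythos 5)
Your proof is correct and follows essentially the same route as the paper's: monotonicity of the cuts read off the intersection hypothesis plus Proposition \ref{bpue}(\rmn1) for (\rmn1), the same two-case sequence argument ($\al=c_0$ versus $\al>c_0$) for the converse in (\rmn2), and the reduction to Proposition \ref{bpe}(\rmn2) via $\bigcup_\al\langle u\rangle_\al=\langle u\rangle_{c_0}\in K(X)$ for (\rmn3). The only difference is cosmetic: you spell out the trivial cases $\eta=\xi$ and $\xi=c_0$ in (\rmn1), which the paper leaves implicit.
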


\begin{proof}
First we show (\rmn1).
 Let $\xi,\eta \in [c_0, c_1]$ with $\eta\leq \xi$.
As $\langle u\rangle_\xi = \bigcap_{\gamma\in [c_0, \xi)} \langle u\rangle_\gamma$, it follows that $\langle u \rangle_\xi\subseteq \langle u \rangle_\eta$. Thus by Proposition \ref{bpue}(\rmn1), $\langle u \rangle_{c_0} \not= \emptyset$. So (\rmn1) is true.

To show (\rmn2), by Proposition \ref{bpe}(\rmn1), we only need to verify that
if
for each $\al\in [c_0,c_1]$,
$\langle u \rangle_\al \in C(X) \cup \{\emptyset\}$, then $u\in C(X\times [c_0, c_1])$.
To this end,
let $\{(x_n, \al_n)\}$ be a sequence
in $u$ which converges to an element $(x,\al)$ in $(X\times [c_0, c_1],  \overline{d})$.
It suffices to show that
$(x,\al)\in u$.

Note that $d(x_n, x) \to 0$ and $|\al_n-\al|\to 0$.
Assume that $\al=c_0$.
Notice that $\{x_n\}$ is a sequence in $\langle u\rangle_{c_0}$. Thus
$x\in \langle u\rangle_{c_0}$ as $\langle u\rangle_{c_0}\in C(X)$.
This means that $(x,\al)=(x,c_0)\in u$.

Assume that $\al>c_0$.
Given $\beta\in [c_0,\al)$. Then there is an $N
\in \mathbb{N}$ such that $\al_n>\beta$ for all $n\geq N$.
This implies that $x_n\in \langle u \rangle_{\beta}$ for all $n\geq N$.
Thus $x\in \langle u \rangle_{\beta}$ as $d(x_n, x)\to 0$ and $\langle u \rangle_{\beta}$ is closed in $X$.
  Hence
$x\in \bigcap_{\beta\in [c_0,\al)} \langle u \rangle_{\beta} = \langle u \rangle_\al$.
This means that $(x,\al)\in u$. So (\rmn2) is proved.

Now we show (\rmn3).
By Proposition \ref{bpe}(\rmn3), $u\in K(X\times [c_0,c_1])$ implies that $\langle u \rangle_\al \in K(X) \cup \{\emptyset\}$ for all $\al\in [c_0,c_1]$.

Assume that
$\langle u \rangle_\al \in K(X) \cup \{\emptyset\}$ for all $\al\in [c_0,c_1]$.
Then by (\rmn2), $u\in C(X\times [c_0,c_1])$.
Note that
$\bigcup_{\al\in [0,1]} \langle u \rangle_\al = \langle u \rangle_{c_0} \in K(X)$.
Thus by Proposition \ref{bpe}(\rmn2),
$u\in K(X\times [c_0,c_1])$.
So (\rmn3) is proved.

\end{proof}

 Let $u$ be a subset of $X\times [0, 1]$.
Clearly
 $u=\emptyset$ if and only if for each $\al\in [0,1]$, $\langle u \rangle_\al = \emptyset$. So
Proposition \ref{bpec}(\rmn1)
is equivalent to Corollary \ref{bpecun}(\rmn1);
Proposition \ref{bpec}(\rmn2) is equivalent to Corollary \ref{bpecun}(\rmn2);
 Proposition \ref{bpec}(\rmn3) is equivalent to Corollary \ref{bpecun}(\rmn3).

\begin{tl}
\label{bpecun}
Let $c_0, c_1\in[0,1]$ with $0\leq c_0 \leq c_1 \leq 1$. Let $u$ be a subset of $X\times [c_0, c_1]$ satisfying that
for each $\al\in (c_0, c_1]$, $\langle u\rangle_\al = \bigcap_{\beta\in [c_0, \al)} \langle u\rangle_\beta$. Then
\\
(\rmn1) For each $\xi,\eta \in [c_0, c_1]$ with $\eta\leq \xi$, $\langle u \rangle_\xi\subseteq \langle u \rangle_\eta$.
\\
(\rmn2)
$u\in C(X\times [0, 1]) \cup \{\emptyset\}$ if and only if $\langle u \rangle_\al \in C(X) \cup \{\emptyset\}$ for all $\al\in [0,1]$.
\\
(\rmn3) $u\in K(X\times [0,1]) \cup \{\emptyset\}$ if and only if $\langle u \rangle_\al \in K(X) \cup \{\emptyset\}$ for all $\al\in [0,1]$.
  \end{tl}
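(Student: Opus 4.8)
The plan is to reduce everything to Proposition \ref{bpec} by separating the empty and nonempty cases, exactly along the lines of the observation recorded just before the statement.

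First I would dispose of the trivial case $u=\emptyset$. Here $\langle u \rangle_\al = \emptyset$ for every $\al\in[0,1]$, so (\rmn1) holds vacuously (both cuts equal $\emptyset$), while in both (\rmn2) and (\rmn3) the left-hand side holds because $\emptyset$ belongs to $C(X\times[0,1])\cup\{\emptyset\}$ and to $K(X\times[0,1])\cup\{\emptyset\}$, and the right-hand side holds because each $\langle u \rangle_\al=\emptyset$ belongs to $C(X)\cup\{\emptyset\}$ and to $K(X)\cup\{\emptyset\}$. Hence each biconditional is true and (\rmn1)--(\rmn3) all hold when $u=\emptyset$.

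Next, for $u\neq\emptyset$, I would invoke Proposition \ref{bpec}, whose hypotheses are precisely those of the corollary together with nonemptiness. The only gap to bridge is that Proposition \ref{bpec} is phrased with the ambient space $X\times[c_0,c_1]$ and the index range $\al\in[c_0,c_1]$, whereas the corollary uses $X\times[0,1]$ and $\al\in[0,1]$. Both discrepancies are resolved by the two elementary facts noted just after Proposition \ref{bpe}: for a subset of $X\times[c_0,c_1]$, membership in $C(X\times[c_0,c_1])$ (respectively $K(X\times[c_0,c_1])$) is equivalent to membership in $C(X\times[0,1])$ (respectively $K(X\times[0,1])$); and since $\langle u \rangle_\al=\emptyset$ for every $\al\in[0,1]\setminus[c_0,c_1]$, the quantifier ``for all $\al\in[c_0,c_1]$'' may be replaced by ``for all $\al\in[0,1]$'' in the membership conditions on the cuts. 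Substituting these equivalences into the conclusion of Proposition \ref{bpec} yields (\rmn1)--(\rmn3) directly, the auxiliary assertion $\langle u \rangle_{c_0}\neq\emptyset$ of Proposition \ref{bpec}(\rmn1) simply being dropped.

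I expect no real obstacle here: the mathematical content is entirely carried by Proposition \ref{bpec}, and the corollary is merely its restatement that additionally admits $u=\emptyset$. The only point requiring any care is ensuring that empty cuts are correctly absorbed into the ``$\cup\{\emptyset\}$'' on both sides of each biconditional, which is exactly why the conclusions are stated with $C(X\times[0,1])\cup\{\emptyset\}$ and $K(X\times[0,1])\cup\{\emptyset\}$ rather than with $C(X\times[0,1])$ and $K(X\times[0,1])$.
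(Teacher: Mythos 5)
Your proposal is correct and follows essentially the same route as the paper: the paper derives Corollary \ref{bpecun} from Proposition \ref{bpec} in the paragraph preceding the statement, by noting that $u=\emptyset$ exactly when every cut $\langle u\rangle_\al$ is empty (so that case is trivial on both sides of each biconditional) and that the nonempty case is Proposition \ref{bpec}, with the passage from $X\times[c_0,c_1]$ to $X\times[0,1]$ handled by the two elementary facts recorded after Proposition \ref{bpe}. Nothing is missing.
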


We use \bm{$P_{USC}(X)$} to denote the family of all subsets of $X\times[0,1]$
which satisfies that (\rmn1) for each
  $\al\in [0,1]$, $\langle u \rangle_\al \in C(X)\cup \{\emptyset\}$,
and (\rmn2) for each
  $\al\in (0,1]$, $\langle u\rangle_\al = \bigcap_{\beta<\al} \langle u\rangle_\beta$.

We use \bm{$P_{USCB}(X)$} 
to denote the family of all subsets of $X\times[0,1]$
which satisfies that (\rmn1) for each
  $\al\in [0,1]$, $\langle u \rangle_\al \in K(X)\cup \{\emptyset\}$,
and (\rmn2) for each
  $\al\in (0,1]$, $\langle u\rangle_\al = \bigcap_{\beta<\al} \langle u\rangle_\beta$.

Clearly
$P_{USCB}(X) 
=\{u\in P_{USC}(X):
   \langle u \rangle_\al \in K(X)\cup \{\emptyset\} \mbox{ for all } \al\in [0,1]\}
= 
\{u\in P_{USC}(X): \langle u \rangle_0 \in K(X)\cup \{\emptyset\} \}$.

Obviously $\emptyset \in P_{USCB}(X)\subseteq P_{USC}(X)$.
By Corollary \ref{bpecun},
\begin{gather}
P_{USC}(X)=\left\{  u \in C(X\times [0,1])\cup \{\emptyset\}: \mbox{ for each }
  \al\in (0,1], \ \langle u\rangle_\al = \bigcap_{\beta<\al} \langle u\rangle_\beta \right\}, \label{pusc}
\\
P_{USCB}(X)=\left\{  u \in K(X\times [0,1])\cup \{\emptyset\}: \mbox{ for each }
  \al\in (0,1], \ \langle u\rangle_\al = \bigcap_{\beta<\al} \langle u\rangle_\beta \right\}.\label{puscbe}
 \end{gather}
So
$ P_{USCB}(X) = \big(K(X\times [0,1])\cup \{\emptyset\}\big)\cap P_{USC}(X)$.

Clearly $P_{USC}(X)\setminus \{\emptyset\}\subset C(X\times [0,1])$
and
$P_{USCB}(X) \setminus \{\emptyset\} \subset K(X\times [0,1])$.
Choose $x\in X$. Let $u=\{(x,0), (x,1)\}$. Then $u\in K(X\times [0,1])$ but
$u\notin P_{USC}(X)$. This means that
$u\in K(X\times [0,1]) \setminus P_{USCB}(X)$
and
$u\in C(X\times [0,1]) \setminus P_{USC}(X)$.
So
  $P_{USC}(X) \setminus \{\emptyset\}\subsetneqq C(X\times [0,1])$
and $P_{USCB}(X) \setminus\{\emptyset\} \subsetneqq K(X\times [0,1])$.

It is easy to see that the conditions
(\rmn1) $X$ is compact, (\rmn2) $P_{USC}(X)  = P_{USCB}(X)$, and (\rmn3) $F_{USC}(X)=F_{USCB}(X)$, are equivalent.

Let $u\in F_{USC}(X)$.
Define $\bm{u^e}:= {\rm end}\, u $ and $\bm{u^s}:= {\rm send}\, u $.
Clearly ${\emptyset_{F(X)}}^s = \emptyset$.
Let $A$ be a subset of $F_{USC}(X)$.
Define
$\bm{A^e}:= \{u^e:  u\in A\}$ and $\bm{A^s}:= \{u^s:  u\in A\}$.
Then
\begin{equation}\label{areu}
F_{USC}(X)^e \subseteq P_{USC}(X), \ F_{USC}(X)^s \subseteq P_{USC}(X), \
F_{USCB}(X)^s \subseteq P_{USCB}(X).
\end{equation}

Let $A\in C(X)$ and $B\in K(X)$.
Then
\begin{gather*}
 \emptyset\in P_{USC}(X) \setminus F_{USC}(X)^e = \{u\in P_{USC}(X) : \langle u \rangle_0 \not= X\},
\\
A\times\{0\} \in P_{USC}(X) \setminus F_{USC}(X)^s = \{u\in P_{USC}(X) : \langle u \rangle_0 \supsetneqq \overline{\cup_{\al>0}\langle u \rangle_\al}\},
\\
B\times\{0\} \in P_{USCB}(X) \setminus F_{USCB}(X)^s = \{u\in P_{USCB}(X) : \langle u \rangle_0 \supsetneqq \overline{\cup_{\al>0}\langle u \rangle_\al}\}.
\end{gather*}
So each ``$\subseteq$'' in \eqref{areu} can be replaced by ``$\subsetneqq$''.

If $X$ is a singleton, then
$
P_{USC}(X) \setminus F_{USC}(X)^e =\{\emptyset\} $
and
$P_{USC}(X) \setminus F_{USC}(X)^s = P_{USCB}(X) \setminus F_{USCB}(X)^s  = \{X\times\{0\}\}$.

 If $X$ has at least two elements,
then each of $P_{USC}(X) \setminus F_{USC}(X)^e$,
$P_{USC}(X) \setminus F_{USC}(X)^s$ and $P_{USCB}(X) \setminus F_{USCB}(X)^s$
has elements with their heights being positive.

Assume that $X$ has at least two elements.
Choose $x,y\in X$ with $x\not=y$.
Define $u,v$ in $X\times [0,1]$ by putting
\[
\langle u\rangle_{\al}=
\left\{
   \begin{array}{ll}
    \{x\}, & \al\in (0,1],
\\
\{x,y \}, & \al=0,
   \end{array}
 \right.
\mbox{ and }
\langle v\rangle_{\al}=
    \{x\} \mbox{ for all }\al\in [0,1].
\]
Then
 $u,v\in P_{USCB}(X)$ and $S_u=S_v=1$. However,
$u\notin F_{USC}(X)^s$, $v\notin F_{USC}(X)^e$.
So
$v\in P_{USC}(X) \setminus F_{USC}(X)^e$,
$u\in P_{USC}(X) \setminus F_{USC}(X)^s$,
$u\in
P_{USCB}(X) \setminus F_{USCB}(X)^s$.

Let $u\in P_{USC}(X)$. By Proposition \ref{fus},
we can define $\bm{\overleftarrow{u}}\in F_{USC}(X)$ as follows:
for each $\al\in (0,1]$,
$
[\overleftarrow{u}]_\al= \langle u \rangle_\al
$.
 Then $\overleftarrow{u}^s \subseteq u \subseteq \overleftarrow{u}^e$.

Let $A$ be a subset of $P_{USC}(X)$.
Define
$\bm{\overleftarrow{A}}:= \{\overleftarrow{u}:  u\in A\}$.
Clearly for each $u\in F_{USC}(X)$, $\overleftarrow{u^e} = \overleftarrow{u^s} =u$, and for
each $u\in F_{USCB}(X)$, $u^s\in P_{USCB}(X)$.
So $\overleftarrow{P_{USC}(X)}= F_{USC}(X)$ and $\overleftarrow{P_{USCB}(X)}= F_{USCB}(X)$.

Clearly for each $u\in P_{USC}(X)$ and $v\in F_{USC}(X)$,
$\overleftarrow{u} = v$ if and only if $v^s\subseteq u \subseteq v^e$.
The conditions
(\rmn1) $X$ is compact, and (\rmn2) there is no $u\in P_{USC}(X)\setminus P_{USCB}(X)$ with $\overleftarrow{u}\in F_{USCB}(X)$, are equivalent.
 $X$ is compact means that $P_{USC}(X)\setminus P_{USCB}(X)=\emptyset$.
So (\rmn1)$\Rightarrow$(\rmn2).
If $X$ is not compact, pick $x\in X$, then ${\widehat{x}}^e\in P_{USC}(X)\setminus P_{USCB}(X)$ and $\overleftarrow{{\widehat{x}}^e}=\widehat{x}\in F_{USCB}(X)$.
So (\rmn2)$\Rightarrow$(\rmn1).

We can use the results in this section to discuss the properties of $F(X)$.
Below we use Corollary \ref{bpecun}(\rmn2) to show the conclusion in Remark \ref{bcp}.
Conversely, we can also use Remark \ref{bcp} to show Corollary \ref{bpecun}(\rmn2). All these two proofs are routine.

Let $u\in F(X)$. Observe that for each $\al\in (0, 1]$,
 $\langle {\rm end}\,u \rangle_\al = \langle {\rm send}\,u \rangle_\al = [u]_\al$, and  $\langle {\rm end}\,u \rangle_0 = X$ and $\langle {\rm send}\,u \rangle_0 = [u]_0$.
So
 $\langle {\rm end}\,u \rangle_\al = \bigcap_{\beta\in [0, \al)} \langle {\rm end}\,u \rangle_\beta$ and
 $\langle {\rm send}\,u \rangle_\al = \bigcap_{\beta\in [0, \al)} \langle {\rm send}\,u \rangle_\beta$.
As
$\langle {\rm end}\,u \rangle_0 \in C(X)$ and $\langle {\rm send}\,u \rangle_0 = [u]_0\in C(X)$, by Corollary \ref{bpecun}(\rmn2),
the conditions
(\rmn1) ${\rm end}\,u$ is closed in $(X\times [0,1],   \overline{d})$,
(\rmn2) ${\rm send}\,u$ is closed in $(X\times [0,1],   \overline{d})$,
and
(\rmn3) $[u]_\al \in C(X) \cup \{\emptyset\}$ for all $\al\in (0,1]$, are equivalent.
Clearly $u\in F_{USC}(X)$ is equivalent to (\rmn3).
So we show the conclusion in Remark \ref{bcp}.

\section{Characterization of compactness in $(F_{USCG} (X), H_{\rm end})$}
\label{cmg}

In this section, we give the characterizations of
relatively compact sets,
totally bounded sets, and compact sets in
$(F_{USCG} (X), H_{\rm end})$, respectively.
We point out that
these results improve
the
characterizations of
relatively compact sets,
totally bounded sets, and compact sets in
$(F_{USCG} (\mathbb{R}^m), H_{\rm end})$ given in our previous work \cite{huang}, respectively.

\begin{itemize}
 \item A subset $Y$ of a topological space $Z$ is said to be \emph{compact} if for every set $I$
and every family of open sets, $O_i$, $i\in I$, such that $Y\subset \bigcup_{i\in I} O_i$ there exists
a finite family $O_{i_1}$, $O_{i_2}$ \ldots, $O_{i_n}$ such that
$Y\subseteq O_{i_1}\cup O_{i_2}\cup\ldots \cup O_{i_n}$.
In
the case of a metric topology, the criterion for compactness becomes that any sequence in $Y$ has a subsequence convergent in $Y$.

 \item
A \emph{relatively compact} subset $Y$ of a topological space $Z$ is a subset with compact closure. In the case of a metric topology, the criterion for relative compactness becomes that any sequence in $Y$ has a subsequence convergent in $X$.

 \item Let $(X, d)$ be a metric space. A set $U$ in $X$ is \emph{totally bounded} if and only if, for each $\varepsilon>0$, it contains a finite $\varepsilon$ approximation, where an $\varepsilon$ approximation to $U$ is a subset $S$ of $U$ such that $d(x,S)<\varepsilon$ for each $x\in U$.
An $\varepsilon$ approximation to $U$ is also called an $\varepsilon$-net of $U$.

It is known that $U$ in $X$ is totally bounded if and only if, for each $\varepsilon>0$, there is a finite \emph{weak}
$\varepsilon$\emph{-net} of $U$, where
a \emph{weak}
$\varepsilon$\emph{-net} of $U$ is a subset $S$ of $X$
satisfying that $d(x,S)<\varepsilon$ for each $x\in U$.

\end{itemize}

Let $(X, d)$ be a metric space. A set $U$ is compact in $(X,d)$ implies that $U$ is relatively compact
in $(X,d)$, which in turn
implies that $U$ is totally bounded in $(X,d)$.
Let $Y$ be a subset of $X$ and $A$ a subset of $Y$.
Then
$A$ is totally bounded in $(Y,d)$ if and only if
$A$ is totally bounded in $(X,d)$.
See also \cite{huang719} or
the papers: 
H. Huang, Properties of several fuzzy set spaces (v1-v13), chinaXiv:202107.00011, some of them appear earlier than \cite{huang719}.
The paper:
H. Huang, Characterizations of several convergence structures on fuzzy sets, arXiv:1910.02205v1, submitted on 5 Oct 2019, is an earlier version of chinaXiv:202107.00011.

\begin{tm} \cite{huang719}\label{tbe}
  Let $(X,d)$ be a metric space and $\mathcal{D}\subseteq K(X)$.
   Then $\mathcal{D}$ is totally bounded in $(K(X), H)$
if and only if
$\mathbf{ D} =   \bigcup \{C:  C \in  \mathcal{D} \} $ is totally bounded in $(X,d)$.
\end{tm}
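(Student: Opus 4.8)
The plan is to prove both implications directly from the weak-$\varepsilon$-net characterization of total boundedness recalled just before the statement, exploiting the two Hausdorff semi-distances $H^*$ together with the triangle inequality. Throughout I would use that every $C\in\mathcal{D}$ is compact, hence totally bounded in $(X,d)$, and that a finite union of compact sets is compact.

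For the forward direction, suppose $\mathcal{D}$ is totally bounded in $(K(X),H)$ and fix $\varepsilon>0$. First I would take a finite weak $\frac{\varepsilon}{2}$-net $\{C_1,\dots,C_n\}\subseteq K(X)$ of $\mathcal{D}$. Since each $C_i$ is compact, the union $\bigcup_{i=1}^n C_i$ is compact and therefore totally bounded in $(X,d)$, so it admits a finite $\frac{\varepsilon}{2}$-net $S\subseteq X$. I would then check that this same finite set $S$ is a weak $\varepsilon$-net of $D$: given $x\in D$, choose $C\in\mathcal{D}$ with $x\in C$ and an index $i$ with $H(C,C_i)<\frac{\varepsilon}{2}$; then $d(x,C_i)\le H^*(C,C_i)<\frac{\varepsilon}{2}$ produces some $y\in C_i$ with $d(x,y)<\frac{\varepsilon}{2}$, and $d(y,S)<\frac{\varepsilon}{2}$ produces some $z\in S$ with $d(y,z)<\frac{\varepsilon}{2}$, so the triangle inequality gives $d(x,z)<\varepsilon$ and hence $d(x,S)<\varepsilon$. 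Thus $D$ is totally bounded in $(X,d)$.

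For the converse, suppose $D$ is totally bounded and fix $\varepsilon>0$. I would take a finite weak $\frac{\varepsilon}{2}$-net $S=\{s_1,\dots,s_m\}\subseteq X$ of $D$ and build an approximating family in $K(X)$ out of its nonempty subsets: every nonempty $T\subseteq S$ is finite, hence a member of $K(X)$, and there are only finitely many such $T$. For each $C\in\mathcal{D}$ I would set $T_C:=\{s\in S: d(s,C)<\frac{\varepsilon}{2}\}$ and verify three things: that $T_C\neq\emptyset$, because any $x\in C\subseteq D$ admits some $s\in S$ with $d(x,s)<\frac{\varepsilon}{2}$, which forces $s\in T_C$; that $H^*(T_C,C)\le\frac{\varepsilon}{2}$, immediately from the defining inequality of $T_C$; and that $H^*(C,T_C)\le\frac{\varepsilon}{2}$, since the witness $s$ found for each $x\in C$ lies in $T_C$ and is within $\frac{\varepsilon}{2}$ of $x$. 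Consequently $H(C,T_C)\le\frac{\varepsilon}{2}<\varepsilon$, so the finite family $\{T:\emptyset\neq T\subseteq S\}\subseteq K(X)$ is a weak $\varepsilon$-net of $\mathcal{D}$, and $\mathcal{D}$ is totally bounded in $(K(X),H)$.

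The triangle-inequality estimates are routine; the step I would single out as the crux is the converse construction, where the approximating compact sets are the finitely many nonempty subsets of a fixed finite net $S$ of $D$, rather than sets drawn from $\mathcal{D}$ itself. The subtlety is that one must control both semi-distances $H^*(C,T_C)$ and $H^*(T_C,C)$ at once, and the definition of $T_C$ as exactly the net points close to $C$ is what makes both bounds hold simultaneously; passing from $\frac{\varepsilon}{2}$ to $\varepsilon$ also conveniently upgrades the non-strict bound $H(C,T_C)\le\frac{\varepsilon}{2}$ to the strict inequality required by the weak-net definition.
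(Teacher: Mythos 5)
Your proof is correct. Note that this paper does not actually prove Theorem \ref{tbe}; it is imported from \cite{huang719} as a known result, so there is no in-paper argument to compare against. Your two-sided argument is the standard one and is complete: the forward direction correctly chains $d(x,C_i)\le H^*(C,C_i)<\varepsilon/2$ with an $\varepsilon/2$-net of the compact set $\bigcup_i C_i$, and the converse correctly uses the finitely many nonempty subsets of a finite weak $\varepsilon/2$-net of $\mathbf{D}$ as the approximating family in $K(X)$, with the definition of $T_C$ guaranteeing both semi-distance bounds simultaneously (this is the same device used in the related compactness literature, e.g.\ Greco's treatment of Theorem \ref{rce}).
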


\begin{tm} \cite{greco} \label{rce}
  Let $(X,d)$ be a metric space and $\mathcal{D}\subseteq K(X)$. Then $\mathcal{D} $ is relatively compact in $(K(X), H)$
if and only if
$\mathbf{ D} =   \bigcup \{C:  C \in  \mathcal{D} \} $ is relatively compact in $(X,d)$.
\end{tm}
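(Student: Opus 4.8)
The plan is to prove both implications through the sequential criterion for relative compactness recalled above: a subset $Y$ of a metric space is relatively compact if and only if every sequence in $Y$ admits a subsequence converging in the ambient space. Throughout I will repeatedly use the elementary estimate that if $x \in C'$ then $d(x, C) \leq H^{*}(C', C) \leq H(C', C)$.

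For the necessity (\emph{$\mathcal{D}$ relatively compact in $(K(X), H)$ $\Rightarrow$ $\mathbf{D}$ relatively compact in $(X,d)$}), I would take an arbitrary sequence $\{x_n\}$ in $\mathbf{D}$ and choose $C_n \in \mathcal{D}$ with $x_n \in C_n$. By relative compactness of $\mathcal{D}$, pass to a subsequence with $C_{n_k} \to C$ in $(K(X), H)$ for some $C \in K(X)$. Since $x_{n_k} \in C_{n_k}$, the estimate above gives $d(x_{n_k}, C) \leq H(C_{n_k}, C) \to 0$; because $C$ is compact, this distance is attained at some $y_k \in C$, and $\{y_k\}$ has a subsequence converging to a point $y \in C \subseteq X$. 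The triangle inequality then forces the corresponding subsequence of $\{x_{n_k}\}$ to converge to $y$, so $\{x_n\}$ has a subsequence convergent in $X$. This direction uses only the metric estimate and compactness of the limit set $C$.

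For the sufficiency (\emph{$\mathbf{D}$ relatively compact in $(X,d)$ $\Rightarrow$ $\mathcal{D}$ relatively compact in $(K(X), H)$}), the key observation is that $\overline{\mathbf{D}}$ is compact and each $C \in \mathcal{D}$ satisfies $C \subseteq \mathbf{D} \subseteq \overline{\mathbf{D}}$, so $\mathcal{D} \subseteq K(\overline{\mathbf{D}})$ and I may work inside the compact metric subspace $(\overline{\mathbf{D}}, d)$. Given a sequence $\{C_n\}$ in $\mathcal{D}$, I want a subsequence converging in $(K(X), H)$. Since relative compactness implies total boundedness, Theorem \ref{tbe} shows $\mathcal{D}$ is totally bounded in $(K(X), H)$, so $\{C_n\}$ possesses a Cauchy subsequence $\{C_{n_k}\}$. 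To upgrade this from Cauchy to convergent, I would invoke completeness of the hyperspace over the complete (indeed compact) space $\overline{\mathbf{D}}$: the limit $C$ then lies in $K(\overline{\mathbf{D}}) \subseteq K(X)$, and Hausdorff convergence computed inside $\overline{\mathbf{D}}$ coincides with Hausdorff convergence in $X$.

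The main obstacle is precisely this final convergence step in the sufficiency direction: Theorem \ref{tbe} alone yields only a Cauchy subsequence, and promoting it to a genuine $H$-limit requires the classical fact that the hyperspace $(K(Z), H)$ of a compact metric space $Z$ is compact (equivalently, complete). I would either cite this standard hyperspace result with $Z = \overline{\mathbf{D}}$, or, to keep the argument self-contained, exhibit the limit explicitly as the Kuratowski limit $C = \bigcap_{n} \overline{\bigcup_{k \geq n} C_{n_k}}$ and verify directly, using compactness of $\overline{\mathbf{D}}$, that $C \in K(X)$ and $H(C_{n_k}, C) \to 0$.
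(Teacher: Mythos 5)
The paper does not actually prove Theorem \ref{rce}: it is imported from \cite{greco} as a known result, so there is no in-paper argument to compare yours against. Judged on its own terms, your proof is correct. The necessity direction is complete as written: $d(x_{n_k},C)\leq H^{*}(C_{n_k},C)\leq H(C_{n_k},C)\to 0$, compactness of the limit set $C$ gives a convergent subsequence of the nearest points $y_k$, and the triangle inequality transfers convergence to $\{x_{n_k}\}$. The sufficiency direction is also sound, but its entire weight rests on the one external ingredient you name: compactness (equivalently, completeness together with total boundedness) of the hyperspace $(K(Z),H)$ over a compact metric space $Z$, applied to $Z=\overline{\mathbf{D}}$. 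Note that once you grant compactness of $(K(\overline{\mathbf{D}}),H)$, the appeal to Theorem \ref{tbe} becomes redundant --- any sequence in $\mathcal{D}\subseteq K(\overline{\mathbf{D}})$ already has an $H$-convergent subsequence with limit in $K(\overline{\mathbf{D}})\subseteq K(X)$, and $H$ restricted to $K(\overline{\mathbf{D}})$ agrees with $H$ on $K(X)$; Theorem \ref{tbe} is only genuinely needed if you instead invoke just completeness of the hyperspace and must first manufacture a Cauchy subsequence. If you pursue the self-contained variant via the Kuratowski limit $C=\bigcap_{n}\overline{\bigcup_{k\geq n}C_{n_k}}$, be aware that the real work is in checking $C\neq\emptyset$ and the half-distance $H^{*}(C,C_{n_k})\to 0$, for which the Cauchy property of $\{C_{n_k}\}$ is used in an essential way. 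Either route closes the argument, so there is no gap --- only a standard hyperspace fact that should carry an explicit citation.
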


\begin{tm} \cite{huang719} \label{come}
 Let $(X,d)$ be a metric space and $\mathcal{D}\subseteq K(X)$. Then
    the following are equivalent:
    \\
    (\romannumeral1) \ $\mathcal{D} $ is compact in $(K(X), H)$;
        \\
 (\romannumeral2) \
$\mathbf{ D} =   \bigcup \{C:  C \in  \mathcal{D} \} $ is relatively compact in $(X, d)$
and
$\mathcal{D} $ is closed in $(K(X), H)$;
    \\
 (\romannumeral3) \
$\mathbf{ D} =   \bigcup \{C:  C \in  \mathcal{D} \} $ is compact in $(X, d)$
and
$\mathcal{D} $ is closed in $(K(X), H)$.
\end{tm}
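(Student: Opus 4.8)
The plan is to reduce the whole equivalence to two ingredients: the general metric-space fact that a subset $A$ of a metric space is compact if and only if $A$ is relatively compact and closed, and Theorem \ref{rce}, which translates relative compactness of $\mathcal{D}$ in $(K(X),H)$ into relative compactness of $\mathbf{D}$ in $(X,d)$. First I would establish the equivalence (i)$\Leftrightarrow$(ii). Applying the metric-space fact in the space $(K(X),H)$, the set $\mathcal{D}$ is compact if and only if $\mathcal{D}$ is relatively compact in $(K(X),H)$ and $\mathcal{D}$ is closed in $(K(X),H)$. By Theorem \ref{rce}, relative compactness of $\mathcal{D}$ in $(K(X),H)$ is the same as relative compactness of $\mathbf{D}$ in $(X,d)$. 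Substituting this into the preceding line yields exactly condition (ii), so (i)$\Leftrightarrow$(ii) is immediate.

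It then remains to prove (ii)$\Leftrightarrow$(iii); since both conditions share the clause ``$\mathcal{D}$ is closed in $(K(X),H)$'', the real task is to show that, under this clause, relative compactness of $\mathbf{D}$ is equivalent to compactness of $\mathbf{D}$. One direction, (iii)$\Rightarrow$(ii), is trivial and needs no hypothesis on $\mathcal{D}$: a compact subset of a metric space is closed and hence coincides with its closure, so $\mathbf{D}$ compact forces $\overline{\mathbf{D}}=\mathbf{D}$ compact, i.e. $\mathbf{D}$ relatively compact. For the converse (ii)$\Rightarrow$(iii), since $\mathbf{D}$ is already assumed relatively compact, $\overline{\mathbf{D}}$ is compact, and it suffices to prove that $\mathbf{D}$ is closed in $(X,d)$; then $\mathbf{D}=\overline{\mathbf{D}}$ is compact.

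The heart of the argument, and the step I expect to be the main obstacle, is this closedness of $\mathbf{D}$, where the hypothesis that $\mathcal{D}$ is closed in $(K(X),H)$ must finally be used. I would take $x\in\overline{\mathbf{D}}$ together with a sequence $x_n\in\mathbf{D}$ satisfying $x_n\to x$, and choose $C_n\in\mathcal{D}$ with $x_n\in C_n$. Because $\mathbf{D}$ is relatively compact, Theorem \ref{rce} gives that $\mathcal{D}$ is relatively compact in $(K(X),H)$, so $\{C_n\}$ has a subsequence $\{C_{n_j}\}$ converging to some $C\in K(X)$; the closedness of $\mathcal{D}$ then forces $C\in\mathcal{D}$. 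Finally, the estimate $d(x,C)\leq d(x,x_{n_j})+d(x_{n_j},C)\leq d(x,x_{n_j})+H(C_{n_j},C)$, valid since $x_{n_j}\in C_{n_j}$, has both terms tending to $0$, so $d(x,C)=0$ and hence $x\in C\subseteq\mathbf{D}$ as $C$ is closed. This shows $\overline{\mathbf{D}}\subseteq\mathbf{D}$, completing (ii)$\Rightarrow$(iii) and thereby the whole theorem. Alternatively, one may invoke Theorem \ref{hkg} to pass from $H(C_{n_j},C)\to 0$ to Kuratowski convergence and conclude $x\in\liminf_{j}C_{n_j}=C$, but the direct Hausdorff estimate is shorter.
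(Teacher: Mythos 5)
Your proof is correct and follows essentially the same route as the paper: Theorem \ref{come} is imported from \cite{huang719} without a written proof here, but the paper's proof of the analogous Theorem \ref{cfegum} (which it explicitly says establishes this result ``in a similar fashion'') has exactly your structure --- (i)$\Leftrightarrow$(ii) via the relative-compactness characterization together with ``compact $=$ relatively compact and closed'', (iii)$\Rightarrow$(ii) trivially, and (ii)$\Rightarrow$(iii) by extracting an $H$-convergent subsequence of the $C_n$ and using closedness of $\mathcal{D}$ to place the limit in $\mathcal{D}$. Your direct Hausdorff estimate $d(x,C)\leq d(x,x_{n_j})+H(C_{n_j},C)$ in the final step is a harmless variant of the paper's passage through Kuratowski convergence, which you also note as an alternative.
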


Let $u\in F_{USC}(X)$.
Then $F_{USC}(X)^e\subseteq C(X\times [0,1])$
(see the paragraph below Remark \ref{bcp}).
Define
$g: (F_{USC}(X), H_{\rm end}) \to (C(X\times [0,1]), H)$ by
$g(u) = {\rm end}\, u$, for every $u\in F_{USC}(X)$.
Then $g(F_{USC}(X))=F_{USC}(X)^e$.
Clearly for each $u,v\in F_{USC}(X)$, $H_{\rm end} (u,v)=H(g(u), g(v))$.
This means that $g$ is an isometric embedding
of
 $(F_{USC}(X), H_{\rm end}) $ in $(C(X\times [0,1]), H)$.

The following representation theorem for $F_{USC}(X)^e$ and $F_{USCG}(X)^e$ follows immediately
from Proposition \ref{fus}.

\begin{pp} \label{repu}
Let $U$ be a subset of $X\times [0,1]$.
Then $U\in F_{USC}(X)^e$ (respectively, $U\in F_{USCG}(X)^e$)
if and only if the following properties (\romannumeral1)-(\romannumeral3) are true.
\\
(\romannumeral1) For each $\alpha \in (0, 1]$, $\langle U \rangle_\al \in C(X) \cup \{\emptyset\}$ (respectively, $\langle U \rangle_\al \in K(X) \cup \{\emptyset\}$).
\\
(\romannumeral2) \
For each $\alpha \in (0, 1]$,
$\langle U\rangle_\al = \bigcap_{\beta<\al} \langle U\rangle_\beta $.
\\
(\romannumeral3) \ $\langle U\rangle_0 = X $.
\end{pp}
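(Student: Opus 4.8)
The plan is to read Proposition \ref{fus} through the elementary dictionary between a set $U\subseteq X\times[0,1]$ and the fuzzy set whose endograph it is. The decisive observation, already recorded above, is that for any $u\in F(X)$ one has $\langle {\rm end}\,u\rangle_\al=[u]_\al$ for every $\al\in(0,1]$, while $\langle {\rm end}\,u\rangle_0=X$ because $(x,0)\in{\rm end}\,u$ holds for every $x\in X$. I will also use repeatedly that a subset of $X\times[0,1]$ is completely determined by its slices $\langle\,\cdot\,\rangle_\al$, $\al\in[0,1]$, since $U=\{(x,\al):x\in\langle U\rangle_\al\}$; hence two such subsets coincide precisely when all their slices coincide. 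With this dictionary in hand the statement becomes a direct transcription of Proposition \ref{fus}.

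For the forward implication, suppose $U=u^e={\rm end}\,u$ with $u\in F_{USC}(X)$ (respectively $u\in F_{USCG}(X)$). By the dictionary, $\langle U\rangle_\al=[u]_\al$ for $\al\in(0,1]$, so conclusions (\romannumeral1) and (\romannumeral2) of Proposition \ref{fus} applied to $u$ deliver exactly conditions (\romannumeral1) and (\romannumeral2) here, with $C(X)$ respectively $K(X)$ according to whether $u$ lies in $F_{USC}(X)$ or $F_{USCG}(X)$; condition (\romannumeral3) is immediate from $\langle U\rangle_0=X$. One should note that in (\romannumeral2) the intersection $\bigcap_{\beta<\al}\langle U\rangle_\beta$ formally includes the term $\langle U\rangle_0=X$, which, being the whole space, leaves the intersection unchanged; thus (\romannumeral2) says precisely $[u]_\al=\bigcap_{0<\beta<\al}[u]_\beta$, which is the content of Proposition \ref{fus}(\romannumeral2).

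For the converse, assume $U$ satisfies (\romannumeral1)--(\romannumeral3) and set $v_\al:=\langle U\rangle_\al$ for $\al\in(0,1]$. Using (\romannumeral3) to discard the $\beta=0$ term as above, condition (\romannumeral2) reads $v_\al=\bigcap_{\beta<\al}v_\beta$ for all $\al\in(0,1]$, so the family $\{v_\al:\al\in(0,1]\}$ meets the hypotheses of the converse part of Proposition \ref{fus}, with the closedness (respectively compactness) of each $v_\al$ supplied by (\romannumeral1). Hence there is a unique $u\in F_{USC}(X)$ (respectively $u\in F_{USCG}(X)$) with $[u]_\al=v_\al=\langle U\rangle_\al$ for every $\al\in(0,1]$. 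It remains to verify $U={\rm end}\,u$, which by the dictionary follows by comparing slices: for $\al\in(0,1]$ one has $\langle {\rm end}\,u\rangle_\al=[u]_\al=\langle U\rangle_\al$, and for $\al=0$ both slices equal $X$, the latter by (\romannumeral3). Therefore $U=u^e\in F_{USC}(X)^e$ (respectively $F_{USCG}(X)^e$).

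I do not expect a genuine obstacle: the statement is, as the author indicates, an immediate corollary of Proposition \ref{fus}. The only point demanding care is the treatment of the ground slice $\al=0$, where the endograph forces $\langle U\rangle_0=X$ rather than the support $[u]_0$; this is exactly why condition (\romannumeral3) reads $\langle U\rangle_0=X$, and why the $\beta=0$ term may be inserted into or removed from the intersection in (\romannumeral2) without altering its meaning.
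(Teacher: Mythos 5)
Your proof is correct and follows exactly the route the paper intends: the paper states Proposition \ref{repu} without proof, asserting it ``follows immediately from Proposition \ref{fus}'', and your argument is precisely that derivation spelled out via the slice dictionary $\langle {\rm end}\,u\rangle_\al=[u]_\al$ for $\al\in(0,1]$ and $\langle {\rm end}\,u\rangle_0=X$. Your attention to the $\beta=0$ term in the intersection of condition (\romannumeral2) is a worthwhile clarification but does not constitute a departure from the paper's approach.
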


\begin{pp}
  \label{ucen}
Let $c_0, c_1$ in $[0,1]$ with $c_0\leq c_1$ and
let $U\in C(X\times [c_0,c_1])\cup\{\emptyset\}$. Then the following (\romannumeral1) and (\romannumeral2)
are equivalent.
\\
(\romannumeral1) For each $\alpha$ with $c_0 < \al \leq c_1$,
$\langle U\rangle_\al = \bigcap_{\beta\in [c_0, \al)} \langle U\rangle_\beta $.
\\
(\romannumeral2)
For each $\alpha,\beta$ with $c_0\leq \beta < \al \leq c_1$,
$\langle U\rangle_\al \subseteq \langle U\rangle_\beta $.
\end{pp}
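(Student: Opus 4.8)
The plan is to notice that one of the two implications is purely set-theoretic, so that the closedness hypothesis is needed in only one place, and to clear the degenerate case $U=\emptyset$ at the very start. If $U=\emptyset$, then $\langle U\rangle_\al=\emptyset$ for every $\al\in[c_0,c_1]$, so both (\romannumeral1) and (\romannumeral2) hold trivially and the equivalence is immediate; hence from now on I may assume $U\in C(X\times[c_0,c_1])$.

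For (\romannumeral1)$\Rightarrow$(\romannumeral2) I would argue formally. Let $c_0\leq\beta<\al\leq c_1$. Since $\beta\in[c_0,\al)$, the set $\langle U\rangle_\beta$ is one of the members of the intersection appearing in (\romannumeral1), so $\bigcap_{\gamma\in[c_0,\al)}\langle U\rangle_\gamma\subseteq\langle U\rangle_\beta$. Combining this with the hypothesis $\langle U\rangle_\al=\bigcap_{\gamma\in[c_0,\al)}\langle U\rangle_\gamma$ gives $\langle U\rangle_\al\subseteq\langle U\rangle_\beta$, which is exactly (\romannumeral2). This uses nothing about $U$ beyond the definition of the cuts.

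For (\romannumeral2)$\Rightarrow$(\romannumeral1) I fix $\al\in(c_0,c_1]$ and prove the two inclusions between $\langle U\rangle_\al$ and $\bigcap_{\beta\in[c_0,\al)}\langle U\rangle_\beta$. The inclusion ``$\subseteq$'' is again purely formal: for each $\beta\in[c_0,\al)$ the monotonicity (\romannumeral2) gives $\langle U\rangle_\al\subseteq\langle U\rangle_\beta$, and intersecting over all such $\beta$ yields $\langle U\rangle_\al\subseteq\bigcap_{\beta\in[c_0,\al)}\langle U\rangle_\beta$. The reverse inclusion is the only step that genuinely uses the closedness of $U$, and this is where I expect the real content to lie. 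Given $x\in\bigcap_{\beta\in[c_0,\al)}\langle U\rangle_\beta$, I use $\al>c_0$ to choose a sequence $\beta_n\in[c_0,\al)$ with $\beta_n\to\al$; then $(x,\beta_n)\in U$ for all $n$ and $(x,\beta_n)\to(x,\al)$ in $(X\times[c_0,c_1],\overline{d})$, so the closedness of $U$ forces $(x,\al)\in U$, that is, $x\in\langle U\rangle_\al$. This proves $\bigcap_{\beta\in[c_0,\al)}\langle U\rangle_\beta\subseteq\langle U\rangle_\al$ and hence (\romannumeral1).

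The main obstacle, such as it is, is precisely this reverse inclusion in (\romannumeral2)$\Rightarrow$(\romannumeral1): it is the unique point at which the topological hypothesis $U\in C(X\times[c_0,c_1])$ is indispensable, and the argument rests on sequential closedness together with the existence of an approach $\beta_n\to\al$ from below inside $[c_0,\al)$, which is available exactly because $\al>c_0$. Everything else is elementary manipulation of the cuts $\langle U\rangle_\al$ and their intersections.
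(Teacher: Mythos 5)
Your proof is correct and follows essentially the same route as the paper's: the implication (\romannumeral1)$\Rightarrow$(\romannumeral2) and the inclusion $\langle U\rangle_\al \subseteq \bigcap_{\beta\in [c_0,\al)}\langle U\rangle_\beta$ are formal, and the reverse inclusion is obtained by letting $\beta\to\al^-$ and invoking the closedness of $U$, exactly as in the paper. Your separate treatment of $U=\emptyset$ is harmless but unnecessary, since for $\al>c_0$ the intersection is over a nonempty index set and is therefore empty when $U$ is, so the argument goes through uniformly.
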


\begin{proof} The proof is routine.
  (\romannumeral1)$\Rightarrow$(\romannumeral2) is obviously.

  Suppose that (\romannumeral2) is true.
 To show that (\romannumeral1) is true, let $\al\in (c_0, c_1]$.
      From (\romannumeral2), $\langle U\rangle_\al \subseteq \bigcap_{\beta\in [c_0, \al)} \langle U\rangle_\beta $.
So we only need to prove
that
$\langle U\rangle_\al \supseteq \bigcap_{\beta\in [c_0, \al)} \langle U\rangle_\beta $.
   To do this, let
   $x\in \bigcap_{\beta\in [c_0, \al)} \langle U\rangle_\beta$.
   This means that for each $\beta\in [c_0,\al)$, $(x,\beta) \in U$.
  Observe that $\lim_{\beta\to \al-} \overline{d}((x,\beta), (x, \al)) = 0$, so $(x, \al)\in U$, by the closedness of
$U$.
Hence $x\in \langle U \rangle_\al$.
Since $x \in \bigcap_{\beta\in [c_0, \al)} \langle U\rangle_\beta$ is arbitrary,
we have that $\langle U\rangle_\al \supseteq \bigcap_{\beta\in [c_0, \al)} \langle U\rangle_\beta $.
   So (\rmn1) is true. Thus
 (\romannumeral2)$\Rightarrow$(\romannumeral1) is proved.

\end{proof}

\begin{pp} \label{repc}
Let $U\in C(X\times [0,1])$.
Then
$U\in F_{USC}(X)^e$
if and only if $U$ has the following properties:
\\
(\romannumeral1) \
for each $\alpha,\beta$ with $0\leq \beta < \al \leq 1$,
$\langle U\rangle_\al \subseteq \langle U\rangle_\beta $, and
\\
(\romannumeral2) \ $\langle U\rangle_0 = X$.
\end{pp}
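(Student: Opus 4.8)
The statement to prove is Proposition \ref{repc}: for $U \in C(X \times [0,1])$, we have $U \in F_{USC}(X)^e$ if and only if (\romannumeral1) the cuts $\langle U \rangle_\al$ are nested downward and (\romannumeral2) $\langle U \rangle_0 = X$. The plan is to reduce this to the already-established representation Proposition \ref{repu} together with the cut-intersection characterization in Proposition \ref{ucen}. The key observation is that the three conditions in Proposition \ref{repu} are, for a set already known to be closed, exactly matched by the two conditions here once Proposition \ref{ucen} is invoked with $c_0 = 0$ and $c_1 = 1$.

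First I would prove the forward direction. Assume $U \in F_{USC}(X)^e$. Then by Proposition \ref{repu}, $U$ satisfies: for each $\al \in (0,1]$, $\langle U \rangle_\al \in C(X) \cup \{\emptyset\}$ and $\langle U \rangle_\al = \bigcap_{\beta < \al} \langle U \rangle_\beta$, and $\langle U \rangle_0 = X$. Condition (\romannumeral2) is then immediate. For condition (\romannumeral1), the intersection property in Proposition \ref{repu}(\romannumeral2), rewritten as $\langle U \rangle_\al = \bigcap_{\beta \in [0,\al)} \langle U \rangle_\beta$ (using $\langle U \rangle_0 = X$, so the $\beta = 0$ term is harmless), is precisely condition (\romannumeral1) of Proposition \ref{ucen} with $c_0 = 0$, $c_1 = 1$. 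Since $U \in C(X \times [0,1])$, Proposition \ref{ucen} applies and yields its condition (\romannumeral2), which is exactly the nestedness $\langle U \rangle_\al \subseteq \langle U \rangle_\beta$ for $\beta < \al$. This gives (\romannumeral1).

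For the converse, assume $U \in C(X \times [0,1])$ satisfies (\romannumeral1) and (\romannumeral2). Since (\romannumeral1) is the condition (\romannumeral2) of Proposition \ref{ucen}, the equivalence in Proposition \ref{ucen} (again with $c_0 = 0$, $c_1 = 1$) gives back its condition (\romannumeral1): $\langle U \rangle_\al = \bigcap_{\beta \in [0,\al)} \langle U \rangle_\beta$ for all $\al \in (0,1]$. Because $\langle U \rangle_0 = X$ by (\romannumeral2), this intersection coincides with $\bigcap_{\beta < \al} \langle U \rangle_\beta$, establishing Proposition \ref{repu}(\romannumeral2). Next, since $U$ is closed in $X \times [0,1]$, Proposition \ref{bpe}(\romannumeral1) guarantees $\langle U \rangle_\al \in C(X) \cup \{\emptyset\}$ for every $\al$, which is Proposition \ref{repu}(\romannumeral1). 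Condition (\romannumeral2) here directly supplies Proposition \ref{repu}(\romannumeral3). Having verified all three hypotheses of Proposition \ref{repu}, I conclude $U \in F_{USC}(X)^e$.

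I do not anticipate a genuine obstacle, since the result is essentially a repackaging of Propositions \ref{repu}, \ref{ucen} and \ref{bpe}. The only point requiring minor care is the bookkeeping at $\beta = 0$: one must confirm that including or excluding the endpoint $\beta = 0$ in the intersection $\bigcap_{\beta < \al} \langle U \rangle_\beta$ makes no difference, which holds precisely because $\langle U \rangle_0 = X$ is the largest possible cut and therefore never constrains the intersection. Everything else is a direct translation between the two equivalent formulations of the downward-nesting/cut-intersection property, so the proof should be routine.
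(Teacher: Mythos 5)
Your proposal is correct and follows essentially the same route as the paper: the paper's proof likewise observes that closedness of $U$ gives $\langle U\rangle_\al\in C(X)\cup\{\emptyset\}$ for all $\al$ and then invokes Propositions \ref{repu} and \ref{ucen} (with $c_0=0$, $c_1=1$) to translate between the cut-intersection and nestedness formulations. Your extra remark about the $\beta=0$ endpoint is harmless but not needed, since both intersections already range over $\beta\in[0,\al)$.
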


\begin{proof}
Since
 $U\in C(X\times [0,1])$, then clearly $\langle U \rangle_\al \in C(X) \cup \{ \emptyset \}$
for all $\al\in [0,1]$.
Thus the desired result follows immediately from Propositions \ref{repu} and \ref{ucen}.

\end{proof}

As a shorthand, we denote the sequence $x_1, x_2, \ldots, x_n, \ldots$ by $\{x_n\}$.

\begin{pp}\label{usce}
$F_{USC}(X)^e$ is a closed subset of $(C(X \times [0,1]), H)$.
\end{pp}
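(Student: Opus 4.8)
The plan is to show that $F_{USC}(X)^e$ is sequentially closed in $(C(X\times [0,1]),H)$. So I take a sequence $\{U_n\}$ in $F_{USC}(X)^e$ and an element $U\in C(X\times [0,1])$ with $H(U_n,U)\to 0$, and I aim to prove $U\in F_{USC}(X)^e$. Since $U$ is already known to lie in $C(X\times [0,1])$, the representation theorem Proposition \ref{repc} reduces the task to checking two properties of $U$: (i) $\langle U\rangle_\al\subseteq\langle U\rangle_\be$ whenever $0\leq\be<\al\leq 1$, and (ii) $\langle U\rangle_0=X$. The main tool will be Theorem \ref{hkg}: from $H(U_n,U)\to 0$ it gives the Kuratowski convergence $U=\liminf_{n\to\infty}U_n=\limsup_{n\to\infty}U_n$, and I will extract (i) and (ii) from these two equalities, using that each $U_n$ already satisfies (i) and (ii) by Proposition \ref{repc}.

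Property (ii) is immediate. Each $U_n\in F_{USC}(X)^e$ satisfies $\langle U_n\rangle_0=X$, so $(x,0)\in U_n$ for every $x\in X$ and every $n$. Since the constant sequence $(x,0)$ lies in $U_n$ and converges to $(x,0)$, we get $(x,0)\in\liminf_{n\to\infty}U_n=U$. Hence $X\subseteq\langle U\rangle_0$, and as the reverse inclusion is trivial, $\langle U\rangle_0=X$.

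For property (i), fix $\al,\be$ with $0\leq\be<\al\leq 1$ and let $x\in\langle U\rangle_\al$, so $(x,\al)\in U=\liminf_{n\to\infty}U_n$. By definition of $\liminf$ there are points $(x_n,\gamma_n)\in U_n$ with $\overline{d}((x_n,\gamma_n),(x,\al))\to 0$, so that $x_n\to x$ in $X$ and $\gamma_n\to\al$. Because $\be<\al$, we have $\gamma_n>\be$ for all large $n$; for such $n$ the nesting of the cuts of $U_n$ (property (i) applied to $U_n$) gives $\langle U_n\rangle_{\gamma_n}\subseteq\langle U_n\rangle_\be$, whence $x_n\in\langle U_n\rangle_\be$, i.e. $(x_n,\be)\in U_n$. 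The sequence $(x_n,\be)$ then converges to $(x,\be)$, so $(x,\be)\in\limsup_{n\to\infty}U_n=U$, that is $x\in\langle U\rangle_\be$. This proves $\langle U\rangle_\al\subseteq\langle U\rangle_\be$, and Proposition \ref{repc} yields $U\in F_{USC}(X)^e$. I expect property (i) to be the only real obstacle: one must transport the monotonicity of the cuts through the limit, which is precisely where the approximating sequence $(x_n,\gamma_n)$ and the descent from level $\gamma_n$ to the fixed lower level $\be$ (legitimate once $\gamma_n>\be$) do the work; property (ii) and the reduction to (i)--(ii) are routine.
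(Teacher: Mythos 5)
Your proposal is correct and follows essentially the same route as the paper's own proof: both invoke Theorem \ref{hkg} to pass from $H$-convergence to Kuratowski convergence, then verify the two conditions of Proposition \ref{repc} by approximating $(x,\al)\in U$ with points $(x_n,\gamma_n)\in U_n$, dropping to level $\beta$ once $\gamma_n>\beta$, and recovering $(x,\beta)\in U$ from the Kuratowski limit. The only cosmetic difference is that you land the limit point in $\limsup_{n\to\infty}U_n$ while the paper uses $\liminf_{n\to\infty}U_n$; since both coincide with $U$, this is immaterial.
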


\begin{proof}
  Let $\{u_n^e:   n=1,2,\ldots \}$ be a sequence in $F_{USC}(X)^e$
with
$\{u_n^e\}$ converging to $U$ in $(C(X \times [0,1]), H)$.
To show the desired result, we
only need to show
that $U \in F_{USC}(X)^e$.

We claim that
\\
(\romannumeral1) \
for each $\alpha,\beta$ with $0\leq \beta < \alpha \leq 1$,
$\langle U\rangle_\al \subseteq \langle U\rangle_\beta $;
\\
(\romannumeral2) \ $\langle U\rangle_0 = X $.

To show (\romannumeral1),
let
 $\alpha,\beta$ in $[0,1]$ with $\beta < \alpha$,
and let
 $x \in \langle U\rangle_\al$, i.e. $(x,\al) \in U$.
By Theorem \ref{hkg},
$\lim_{n\to \infty}^{(K)} u_n^e \, = U$.
Then there is a sequence $\{(x_n, \al_n)\}$ satisfying $(x_n, \al_n) \in u_n^e$ for $n=1,2,\ldots$
and
$\lim_{n\to\infty} \overline{d} ((x_n, \al_n), (x,\al)) = 0$.
Hence there is an $N$ such that $\al_n > \beta$ for all $n\geq N$.
Thus $(x_n, \beta) \in u_n^e$ for all $n\geq N$. Note that
$\lim_{n\to\infty} \overline{d} ((x_n, \beta), (x,\beta)) = 0$.
Then $(x,\beta) \in \lim_{n\to \infty}^{(K)} u_n^e = U$.
This means that
$x\in \langle U\rangle_\beta$.
So (\romannumeral1) is true.

Clearly $\langle U \rangle_0 \subseteq X$.
From $\lim_{n\to \infty}^{(K)} u_n^e \, = U$
and $\langle u_n^e \rangle_0 = X$,
we have that
$\langle U \rangle_0 \supseteq X$.
Thus
$\langle U \rangle_0 = X$.
So (\romannumeral2) is true.

By Proposition \ref{repc}, (\romannumeral1) and (\romannumeral2) imply that
 $U \in F_{USC}(X)^e$.

\end{proof}

\begin{re}
  {\rm
(\rmn1) By \eqref{pusc}, \eqref{puscbe} and
  Proposition \ref{ucen},
\\
$P_{USC}(X)=\left\{  u \in C(X\times [0,1])\cup \{\emptyset\}: \mbox{ for each } \alpha,\beta \mbox{ with } 0\leq \beta < \alpha \leq 1, \langle u\rangle_\al \subseteq \langle u\rangle_\beta \right\}$.
\\
$P_{USCB}(X)=\left\{  u \in K(X\times [0,1])\cup \{\emptyset\}: \mbox{ for each } \alpha,\beta \mbox{ with } 0\leq \beta < \alpha \leq 1, \langle u\rangle_\al \subseteq \langle u\rangle_\beta \right\}$.

(\rmn2) $P_{USC}(X)$ is a closed subset of $(C(X \times [0,1])\cup\{\emptyset\}, H)$.

Let $\{u_n:   n=1,2,\ldots \}$ be a sequence in $P_{USC}(X)$
with
$\{u_n\}$ converging to $u$ in $(C(X \times [0,1])\cup\{\emptyset\}, H)$.
To show (\rmn2), we
only need to show
that $u \in P_{USC}(X)$.
By (\rmn1), to do this,
it suffices to
show that (a)
for each $\alpha,\beta$ with $0\leq \beta < \alpha \leq 1$,
$\langle u\rangle_\al \subseteq \langle u\rangle_\beta $.
The proof of (a) is similar to that of (\rmn1) in
the proof of
Proposition \ref{usce}.
Thus $u \in P_{USC}(X)$. So (\rmn1) is proved.

(\rmn2)
Let $a\in [0,1]$.
Proposition \ref{ace}(\rmn2) says that
$F^{'a}_{USC}(X)$ is a closed subset of $(F_{USC}(X), H_{\rm end})$.
Then by Proposition \ref{usce}, we have that $F^{'a}_{USC}(X)^e$ is a closed subset of $(C(X \times [0,1]), H)$.

}
\end{re}

We use $(\widetilde{X}, \widetilde{d})$ to denote the completion of $(X, d)$.
We see $(X, d)$ as a subspace of $(\widetilde{X}, \widetilde{d})$.

If there is no confusion,
 we also
use $H$ to denote the Hausdorff metric
on
 $C(\widetilde{X})$ induced by $\widetilde{d}$.
  We also
use $H$ to denote the Hausdorff metric
 on $C(\widetilde{X}\times [0,1])$ induced by $\overline{\widetilde{d}}$.
 We
 also use $H_{\rm end}$ to denote the endograph metric on $F_{USC}(\widetilde{X})$
given by using $H$ on
$C(\widetilde{X} \times [0,1])$.

$F(X)$ can be naturally embedded into $F(\widetilde{X})$. An embedding $\bm{j}$
from $F(X)$ to $F(\widetilde{X})$ is defined as follows.

Let $u\in F(X)$. We can define $j(u)\in F(\widetilde{X})$ as
\[
j(u) (t)
=
\left\{
  \begin{array}{ll}
    u(t), & t\in X,\\
0, & t\in \widetilde{X} \setminus X.
  \end{array}
\right.
\]

Let $U\subseteq X$. If $U$ is compact in $(X,d)$, then $U$ is compact in $(\widetilde{X}, \widetilde{d})$.
So
if $u\in F_{USCG}(X)$, then $j(u)\in F_{USCG} (\widetilde{X})$
because $[j(u)]_\al = [u]_\al \subseteq K(\widetilde{X})\cup \{\emptyset\}$ for each $\al\in (0,1]$.
Also, for $u,v\in F_{USCG}(X)$, $H_{\rm end}(u, v) = H_{\rm end} (j(u), j(v))$.

Define
$k: F_{USCG}(X) \to F_{USCG}(\widetilde{X})$ as
$k(u) = j(u)$ for each $u\in F_{USCG}(X)$.
Then
$k$ is an isometric embedding of $(F_{USCG}(X), H_{\rm end})$ in $(F_{USCG}(\widetilde{X}), H_{\rm end})$.

In the sequel, if there is no confusion, we treat
$(F_{USCG}(X), H_{\rm end})$ as a subspace of $(F_{USCG}(\widetilde{X}), H_{\rm end})$ by
identifying $u$ in $F_{USCG}(X)$ with $j(u)$ in $F_{USCG}(\widetilde{X})$,
and so we treat a subset $U$ of $F_{USCG} (X)$ as a subset of $F_{USCG}(\widetilde{X})$.

Let $u\in F_{USCG}(X)$.
Clearly
  for each $\al\in (0,1]$,
$\langle j(u)^e\rangle_\al= \langle u^e\rangle_\al$.

Suppose that
$U$ is a subset of $F_{USC} (X)$ and $\al\in [0,1]$.
For
writing convenience,
we denote
\begin{itemize}
  \item  $U(\al):= \bigcup_{u\in U} [u]_\al$, and

\item  $U_\al : =  \{[u]_\al: u \in U\}$.
\end{itemize}
Here we mention that an empty union is $\emptyset$.

\begin{pp}\label{aeng}
 Let $u,v\in F_{USC}(X)$ and
$\varepsilon\in (0,1]$. Let $\alpha,\beta\in [0,1]$ with $\alpha-\beta\geq \varepsilon$. Set $a:= H^*([u]_{\al}, \overline{\{v>\beta\}})$
and $b:= \sup\{\overline{d}((y,\al), {\rm end}\, v): y\in [u]_\al\}$.
\\
(\rmn1) If $b<\varepsilon$, then $a\leq b$.
\\
(\rmn2)
If
$
H^*({\rm end}\, u, {\rm end}\, v)<\varepsilon$, then $H^*([u]_{\alpha}, [v]_\beta)\leq H^*({\rm end}\, u, {\rm end}\, v)<\varepsilon$.
\end{pp}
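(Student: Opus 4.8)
The plan is to establish (i) by a direct ``near-infimum'' argument on the endograph of $v$, and then obtain (ii) as a short deduction from (i) together with the elementary observation that $y\in[u]_\al$ forces $(y,\al)\in{\rm end}\,u$.

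For (i), I would first dispose of the degenerate case: if $[u]_\al=\emptyset$, then $a=H^*(\emptyset,\overline{\{v>\beta\}})=0$ and $b=\sup\emptyset=0$, so $a\leq b$ holds trivially; hence I may assume $[u]_\al\neq\emptyset$. Now fix $y\in[u]_\al$ and choose $\eta>0$ with $\eta<\al-\beta-b$, which is possible because $\al-\beta\geq\varepsilon>b$. Since $\overline{d}((y,\al),{\rm end}\,v)\leq b$, there is a point $(z,t)\in{\rm end}\,v$ with $d(y,z)+|\al-t|<b+\eta$. From $(z,t)\in{\rm end}\,v$ we have $v(z)\geq t$, and from $\al-t\leq|\al-t|<b+\eta$ we get $t>\al-b-\eta$; combining these and using the choice of $\eta$ gives $v(z)\geq t>\al-b-\eta>\beta$, so $z\in\{v>\beta\}\subseteq\overline{\{v>\beta\}}$. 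Moreover $d(y,z)\leq d(y,z)+|\al-t|<b+\eta$, whence $d(y,\overline{\{v>\beta\}})<b+\eta$. Letting $\eta\to 0$ yields $d(y,\overline{\{v>\beta\}})\leq b$, and taking the supremum over $y\in[u]_\al$ gives $a\leq b$.

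For (ii), set $h:=H^*({\rm end}\,u,{\rm end}\,v)<\varepsilon$. For each $y\in[u]_\al$ we have $u(y)\geq\al$, i.e. $(y,\al)\in{\rm end}\,u$, so $\overline{d}((y,\al),{\rm end}\,v)\leq h$; taking the supremum over $y\in[u]_\al$ gives $b\leq h<\varepsilon$. Thus the hypothesis of (i) is met, and (i) supplies $a\leq b\leq h$. Finally, since $v\in F_{USC}(X)$ the cut $[v]_\beta$ is closed, and $\{v>\beta\}\subseteq[v]_\beta$ for $\beta\in(0,1]$ while $\overline{\{v>\beta\}}=[v]_0$ when $\beta=0$; in either case $\overline{\{v>\beta\}}\subseteq[v]_\beta$, so $d(y,[v]_\beta)\leq d(y,\overline{\{v>\beta\}})$ for every $y$. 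Therefore $H^*([u]_\al,[v]_\beta)\leq H^*([u]_\al,\overline{\{v>\beta\}})=a\leq h=H^*({\rm end}\,u,{\rm end}\,v)<\varepsilon$, which is exactly the assertion.

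I expect the only delicate point to be the strict/non-strict bookkeeping in (i): one must use $\al-\beta\geq\varepsilon$ together with $b<\varepsilon$ to push the witness point $z$ into the \emph{open} super-level set $\{v>\beta\}$ rather than merely $\{v\geq\beta\}$, since it is the closure of the open cut that appears in the definition of $a$. Choosing $\eta<\al-\beta-b$ is precisely what guarantees $v(z)>\beta$, and the passage $\eta\to 0$ is what upgrades the bound from $<b+\eta$ to the sharp $\leq b$. The remaining inclusions and the reduction of (ii) to (i) are routine.
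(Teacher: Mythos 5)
Your proof is correct and follows essentially the same route as the paper's: both arguments hinge on the observation that $\al-\beta\geq\varepsilon>b$ forces any near-minimizer $(z,\gamma)\in{\rm end}\,v$ of $\overline{d}((y,\al),\cdot)$ to satisfy $\gamma>\beta$, hence $z\in\{v>\beta\}$, and part (\rmn2) is reduced to (\rmn1) in the same way via $H^*([u]_\al,[v]_\beta)\leq a$ and $b\leq H^*({\rm end}\,u,{\rm end}\,v)$. The only difference is presentational: the paper splits the infimum over ${\rm end}\,v$ into the parts $\gamma\leq\beta$ and $\gamma>\beta$ and compares the two partial infima, whereas you work with an explicit $\eta$-approximate minimizer and let $\eta\to 0$.
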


\begin{proof}

First we show (\rmn1).
Assume that $b<\varepsilon$. If $[u]_\al=\emptyset$, then $a=0\leq b$.

Suppose that $[u]_\al \not=\emptyset$.
Let $y\in [u]_\al$.
Set
\begin{gather*}
  \xi:=\inf\{\overline{d}((y,\al), (z,\gamma)): (z,\gamma)\in {\rm end}\, v \mbox{ with } \gamma\leq \beta  \},\\
\eta:=\left\{
        \begin{array}{ll}
                  \inf\{\overline{d}((y,\al), (z,\gamma)): (z,\gamma)\in {\rm end}\, v \mbox{ with } \gamma>\beta  \}, & \{v>\beta\}\not=\emptyset, \\
+\infty, & \{v>\beta\} = \emptyset.
\end{array}
      \right.
\end{gather*}
 Then
$
\overline{d}((y,\al), {\rm end}\, v)=\inf\{\overline{d}((y,\al), (z,\gamma)): (z,\gamma)\in {\rm end}\, v \}=\xi
\wedge
\eta
$. This implies that $
\overline{d}((y,\al), {\rm end}\, v)=\xi$ or $\overline{d}((y,\al), {\rm end}\, v)=\eta$.

Notice that for each $(z,\gamma) \in {\rm end}\, v$ with $\gamma \leq \beta$,
$\overline{d}((y,\al), (z,\gamma)) \geq |\al-\gamma| \geq |\al-\beta|\geq \varepsilon$.
Thus $\xi\geq \varepsilon$.
Since
$ \overline{d}((y,\al), {\rm end}\, v) \leq b <\varepsilon$,
it follows that
 $\{v>\beta\}\not=\emptyset$ and
 \begin{equation}\label{denr}
\overline{d}((y,\al), {\rm end}\, v)=\eta=\inf\{\overline{d}((y,\al), (z,\gamma)): (z,\gamma)\in {\rm end}\, v \mbox{ with } \gamma> \beta\}.
\end{equation}
As
for each
$(z,\gamma)\in {\rm end}\, v \mbox{ with } \gamma > \beta$,
$\overline{d}((y, \al), (z, \gamma)) \geq d(y, z) \geq d(y, \{v>\beta\}) = d(y, \overline{\{v>\beta\}})$,
it follows from \eqref{denr} that
\begin{equation*}
 \overline{d}((y,\al), {\rm end}\, v)
\geq d(y, \overline{\{v>\beta\}}).
\end{equation*}
Thus
$b = \sup\{\overline{d}((y,\al), {\rm end}\, v): y\in [u]_\al\} \geq
 \sup\{d(y, \overline{\{v>\beta\}}: y\in [u]_\al \} = a$.
So (\rmn1) is true.

Now we show (\rmn2).
Clearly $H^*([u]_{\alpha}, [v]_\beta)\leq a$ and $b \leq H^*({\rm end}\, u, {\rm end}\, v)$.
If $
H^*({\rm end}\, u, {\rm end}\, v)<\varepsilon$, then
$b<\varepsilon$, and hence by (\rmn1), $H^*([u]_{\alpha}, [v]_\beta)\leq a\leq
b \leq H^*({\rm end}\, u, {\rm end}\, v)<\varepsilon$.
So (\rmn2) is true.

\end{proof}

Here we mention that Proposition \ref{aeng} and its proof remain true if $\overline{d}$ is replaced by $d'$ and
$H^*$ on $C(X\times [0,1])\cup\{\emptyset\}$ is replaced
 by
$H'^*$ on $C(X\times [0,1])\cup\{\emptyset\}$.

\begin{lm}
   \label{tbfegnum}
  Let $U$ be a subset of $F_{USCG} (X)$. If $U$ is totally bounded in $(F_{USCG} (X), H_{\rm end})$,
then
$U(\al)$
is totally bounded in $(X,d)$ for each $\al \in (0,1]$.
\end{lm}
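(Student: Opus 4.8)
The plan is to show directly that $U(\al)$ admits a finite weak $\varepsilon$-net in $(X,d)$ for every $\varepsilon>0$, transferring the total boundedness of $U$ in the endograph metric down to the $\al$-cuts by means of Proposition \ref{aeng}(\rmn2). Fix $\al\in(0,1]$ and $\varepsilon>0$. First I would set $\varepsilon':=\min\{\varepsilon/2,\ \al/2\}$, so that $\varepsilon'\in(0,1]$, $2\varepsilon'\le\varepsilon$ and, crucially, $\al-\varepsilon'>0$. Since $U$ is totally bounded in $(F_{USCG}(X),H_{\rm end})$, there is a finite subset $\{u_1,\dots,u_n\}\subseteq U$ forming an $\varepsilon'$-net, i.e.\ for every $u\in U$ some index $i$ satisfies $H_{\rm end}(u,u_i)<\varepsilon'$, whence $H^*({\rm end}\,u,{\rm end}\,u_i)<\varepsilon'$.

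Next I would apply Proposition \ref{aeng}(\rmn2) with $v=u_i$, $\beta=\al-\varepsilon'$ (so $\al-\beta=\varepsilon'\ge\varepsilon'$) and the proposition's parameter taken to be $\varepsilon'$; this gives $H^*([u]_\al,[u_i]_{\al-\varepsilon'})\le H^*({\rm end}\,u,{\rm end}\,u_i)<\varepsilon'$. Because $u_i\in F_{USCG}(X)$ and $\al-\varepsilon'\in(0,1]$, each $[u_i]_{\al-\varepsilon'}$ lies in $K(X)\cup\{\emptyset\}$, hence is compact and therefore totally bounded; I would choose a finite $\varepsilon'$-net $S_i\subseteq X$ of $[u_i]_{\al-\varepsilon'}$ and set $S:=\bigcup_{i=1}^{n}S_i$, a finite subset of $X$. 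For any $x\in U(\al)$, say $x\in[u]_\al$, selecting $i$ as above yields $d(x,[u_i]_{\al-\varepsilon'})\le H^*([u]_\al,[u_i]_{\al-\varepsilon'})<\varepsilon'$ (the standing estimate forces $[u_i]_{\al-\varepsilon'}\neq\emptyset$, since otherwise this semidistance would be $+\infty$), so there is $y\in[u_i]_{\al-\varepsilon'}$ with $d(x,y)<\varepsilon'$ and then $s\in S_i$ with $d(y,s)<\varepsilon'$; the triangle inequality gives $d(x,S)\le d(x,s)<2\varepsilon'\le\varepsilon$. Thus $S$ is a finite weak $\varepsilon$-net of $U(\al)$, and as $\varepsilon>0$ is arbitrary, $U(\al)$ is totally bounded in $(X,d)$.

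The main point to handle carefully is the level shift $\beta=\al-\varepsilon'$: Proposition \ref{aeng}(\rmn2) only compares the $\al$-cut of $u$ with a \emph{strictly lower} cut of the approximating $u_i$, and this lower level must remain positive so that the cut of $u_i\in F_{USCG}(X)$ is guaranteed to be compact; this is exactly where membership in $F_{USCG}(X)$ rather than merely $F_{USC}(X)$ is used, and it dictates the restriction $\varepsilon'\le\al/2$. A secondary point is that Proposition \ref{aeng} supplies only one-sided ($H^*$) control, which is precisely what a weak $\varepsilon$-net requires; for this reason I argue directly for total boundedness in $(X,d)$ rather than attempting to establish total boundedness of $\{[u]_\al:u\in U\}$ in $(K(X),H)$ and invoking Theorem \ref{tbe}, an approach that would demand two-sided control not available here. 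The empty-cut cases are harmless, since empty cuts contribute nothing to $U(\al)$ and the $+\infty$ convention on $H^*$ excludes them under the estimate.
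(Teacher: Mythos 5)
Your proof is correct. It rests on exactly the same key estimate as the paper's proof --- Proposition \ref{aeng}(\rmn2) applied with the level shift $\beta=\al-\varepsilon'$, $\varepsilon'=\min\{\varepsilon/2,\al/2\}$, so that the comparison cut $[u_i]_{\al-\varepsilon'}$ still sits at a positive level and is therefore compact for $u_i\in F_{USCG}(X)$ --- but the packaging is genuinely different. The paper argues sequentially: it takes an arbitrary sequence $\{x_n\}$ in $U(\al)$ with $x_n\in[u_n]_\al$, extracts a Cauchy subsequence $\{u_{n_l}\}$ from $\{u_n\}$, shows that $\bigcup_l [u_{n_l}]_\al$ is totally bounded by building a weak $\lambda$-net out of the compact set $\bigcup_{l=1}^{K-1}[u_{n_l}]_\al\cup[u_{n_K}]_{\al-\varepsilon}$, and concludes that $\{x_n\}$ has a Cauchy subsequence. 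You instead work directly with a finite $\varepsilon'$-net $\{u_1,\dots,u_n\}$ of $U$ itself and assemble a finite weak $\varepsilon$-net of all of $U(\al)$ from finite $\varepsilon'$-nets of the compact cuts $[u_i]_{\al-\varepsilon'}$. Your version is shorter and more quantitative (it exhibits the net explicitly and avoids the intermediate claim about unions along a subsequence), while the paper's sequential formulation is chosen to parallel the proof of Theorem 7.8 in \cite{huang719}, which it cites. Your two side remarks are also on point: the one-sided $H^*$ control is exactly what a weak net needs (so no detour through Theorem \ref{tbe} is possible or necessary), and the $+\infty$ convention for $H^*$ against the empty set is what forces $[u_i]_{\al-\varepsilon'}\neq\emptyset$ whenever it is needed.
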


\begin{proof} The proof is similar to that of the necessity part of Theorem 7.8 in \cite{huang719}. See also the papers: H. Huang, Properties of several fuzzy set spaces (v1-v13), chinaXiv:202107.00011.

Let $\al \in (0,1]$.
To show
that
$U(\al)$
is totally bounded in $X$, we only need to show that
each sequence in $U(\al)$
has a Cauchy subsequence.

Let $\{x_n\}$ be a sequence in $U(\al)$. Then there is a sequence $\{u_n\}$ in $U$ with $x_n \in  [u_n]_\al$ for $n=1,2,\ldots$.
Since $U$ is totally bounded in $(F_{USCG} (X), H_{\rm end})$,
$\{u_n\}$ has
 a Cauchy subsequence $\{u_{n_l}\}$ in $(F_{USCG} (X), H_{\rm end})$. So given $\varepsilon\in (0, \al)$, there is a $K( \varepsilon) \in \mathbb{N}$
such that
$
H_{\rm end} (u_{n_l},  u_{n_K})  <   \varepsilon
$
for all $l \geq K$.
Thus by Proposition \ref{aeng}(\rmn2),
\begin{equation}\label{cutsgm}
H^*([u_{n_l}]_\al,       [u_{n_K}]_{\al-\varepsilon}) <  \varepsilon
\end{equation}
for all $l \geq K$.
We claim that
(a) $\bigcup_{l=1} ^{+\infty} [u_{n_l}]_\al $ is totally bounded in $(X,d)$.

To show (a), it suffices to show that
for each $\lambda>0$, there exists a finite weak $\lambda$-net of  $ \bigcup_{l=1} ^{+\infty} [u_{n_l}]_\al$.

Let $\lambda>0$. Set $\varepsilon= \min\{\lambda/2, \al/2\} $.
Then $\varepsilon \in (0, \al)$. Hence there is a $K(\varepsilon)$ such that \eqref{cutsgm} holds for all $l\geq K$.
Set $A:=\bigcup_{l=1} ^{K-1}[u_{n_l}]_\al  \cup [u_{n_K}]_{\al-\varepsilon}$.
Since $A$ is compact,
there is a finite $\varepsilon$ approximation $\{z_j\}_{j=1}^{m}$ to $A$. Below we show that $\{z_j\}_{j=1}^{m}$
is a finite weak $\lambda$-net of $\bigcup_{l=1} ^{+\infty} [u_{n_l}]_\al$.

Let $z \in \bigcup_{l=1} ^{+\infty} [u_{n_l}]_\al$.
If $z \in \bigcup_{l=1} ^{K-1} [u_{n_l}]_\al$, then clearly $d(z, \{z_j\}_{j=1}^{m}) < \varepsilon<\lambda$.
If $z \in \bigcup_{l=K} ^{+\infty} [u_{n_l}]_\al$,
then
by \eqref{cutsgm},
  there exists a $y_z \in [u_{n_K}]_{\al-\varepsilon}$
such that
$d(z, y_z) <\varepsilon $,
and hence
$d(z, \{z_j\}_{j=1}^{m}) \leq  d(z, y_z) + d(y_z, \{z_j\}_{j=1}^{m} ) < 2\varepsilon \leq \lambda$.
Thus for each $z \in \bigcup_{l=1} ^{+\infty} [u_{n_l}]_\al$,
$d(z, \{z_j\}_{j=1}^{m}) < \lambda$.
Hence
$\{z_j\}_{j=1}^{m}$
is a finite weak $\lambda$-net of $\bigcup_{l=1} ^{+\infty} [u_{n_l}]_\al$.
So (a) is proved.

As $\{x_{n_l}\}$ is a sequence in $\bigcup_{l=1} ^{+\infty} [u_{n_l}]_\al$,
 (a) implies that
$\{x_{n_l}\}$
has a Cauchy subsequence.
 Then $\{x_n\}$ also has a Cauchy subsequence
because $\{x_{n_l}\}$ is a subsequence of $\{x_n\}$.
So $U(\al)$
is totally bounded in $X$. This completes the proof.

\end{proof}

\begin{re}{\rm
 It is easy to see that for a totally bounded set $U$ in $(F_{USCG} (X), H_{\rm end})$ and $\al\in (0,1]$,
$U(\al) = \emptyset$ is possible even if $U \not= \emptyset$.
}
\end{re}

Let $u\in F(X)$ and $0\leq r \leq t\leq 1$.
We use the symbol $\bm{{\rm end}_r^t\, u }$ to denote
the subset of ${\rm end}\, u $ given by
$$
{\rm end}_r^t\, u  :=    {\rm end}\, u \cap  ([u]_r \times [r, t] ).$$
For simplicity, we write
${\rm end}_r^1\, u  $ as ${\rm end}_r\, u  $.
We can see that ${\rm end}_0\, u  = {\rm send}\, u$.

\begin{tm}
 \label{rcfegnum}
  Let $U$ be a subset of $F_{USCG} (X)$. Then $U$ is relatively compact in $(F_{USCG} (X), H_{\rm end})$
if and only if
$U(\al)$
is relatively compact in $(X, d)$ for each $\al \in (0,1]$.

\end{tm}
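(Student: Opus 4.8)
The plan is to prove both implications through the sequential criteria for relative compactness recorded at the start of Section~\ref{cmg}, using Proposition~\ref{aeng} to pass between endographs and cuts, Theorem~\ref{rce} to control the cuts level by level, and Proposition~\ref{fus} to assemble a limit.

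For the necessity I would argue directly. Fix $\al\in(0,1]$ and let $\{x_n\}$ be a sequence in $U(\al)$; choose $u_n\in U$ with $x_n\in[u_n]_\al$. Relative compactness of $U$ yields a subsequence $\{u_{n_k}\}$ with $H_{\rm end}(u_{n_k},u)\to0$ for some $u\in F_{USCG}(X)$. Applying Proposition~\ref{aeng}(\rmn2) with $\be=\al/2$ (legitimate once $H_{\rm end}(u_{n_k},u)<\var\leq\al/2$) gives $H^*([u_{n_k}]_\al,[u]_{\al/2})\leq H_{\rm end}(u_{n_k},u)\to0$, so $d(x_{n_k},[u]_{\al/2})\to0$. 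Since $u\in F_{USCG}(X)$ the cut $[u]_{\al/2}$ is compact in $X$, so a further subsequence of $\{x_{n_k}\}$ converges in $X$ to a point of $[u]_{\al/2}$. Hence $U(\al)$ is relatively compact in $(X,d)$. (Note this is exactly where relative compactness, not mere total boundedness, is used: the limiting cut lands in $K(X)$.)

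For the sufficiency the plan is a diagonal construction of the limit through its cuts. Enumerate $\mathbb{Q}\cap(0,1]=\{q_1,q_2,\dots\}$ and take any $\{u_n\}\subseteq U$. For each $q_i$ the cuts $[u_n]_{q_i}$ lie in $K(X)\cup\{\emptyset\}$ and $\bigcup_n[u_n]_{q_i}\subseteq U(q_i)$ is relatively compact, so by Theorem~\ref{rce} the nonempty ones lie in the compact hyperspace $K(\overline{U(q_i)})$. After separating the empty from the nonempty cuts and diagonalizing, I obtain a subsequence (still written $\{u_n\}$) and sets $C_q\in K(X)\cup\{\emptyset\}$ with $[u_n]_q\to C_q$ in $(K(X)\cup\{\emptyset\},H)$ for every rational $q\in(0,1]$; monotonicity of cuts forces $C_{q'}\subseteq C_q$ whenever $q<q'$. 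I then set $[u]_\al:=\bigcap_{q\in\mathbb{Q}\cap(0,\al)}C_q$ for $\al\in(0,1]$. This family is nested, left-continuous, and consists of compact-or-empty sets, so Proposition~\ref{fus} yields a unique $u\in F_{USCG}(X)$ with these cuts, and the inclusion $C_q\subseteq\bigcap_{p<q}C_p=[u]_q$ holds for each rational $q$.

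The crux is to show $H_{\rm end}(u_n,u)\to0$. Here I use the elementary fact that truncating an endograph below a level $\var$ costs at most $\var$: any $(x,t)\in{\rm end}\,w$ with $t<\var$ is within $\overline{d}$-distance $t<\var$ of $(x,0)\in X\times\{0\}\subseteq{\rm end}\,w$. Fixing $\var>0$, I take a finite grid of rationals $0<p_0<\cdots<p_L=1$ with $p_0<\var$ and consecutive gaps below $\var$, so that the finitely many convergences $[u_n]_{p_j}\to C_{p_j}$ hold simultaneously for all $n$ beyond a common threshold $N$. For such $n$ I bound both Hausdorff semi-distances. A point $(x,t)\in{\rm end}\,u_n$ with $t\geq\var$ has $x\in[u_n]_{p_j}$ for the grid level $p_j\in(t-\var,t)$, hence within $\var$ of $C_{p_j}\subseteq[u]_{p_j}$, giving a point of ${\rm end}\,u$ at $\overline{d}$-distance below $2\var$; symmetrically, $(x,t)\in{\rm end}\,u$ with $t\geq\var$ has $x\in[u]_t\subseteq C_{p_j}$ for $p_j\in(t-\var,t)$, and $H^*(C_{p_j},[u_n]_{p_j})<\var$ supplies a nearby point of ${\rm end}\,u_n$; the remaining points, with $t<\var$, are matched through the shared floor $X\times\{0\}$. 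These estimates combine to $H_{\rm end}(u_n,u)\leq2\var$ for $n\geq N$, so $u_n\to u$ in $(F_{USCG}(X),H_{\rm end})$, which is what relative compactness demands. The main obstacle I anticipate is exactly this last step: obtaining uniformity in the level $t$ (handled by the finite rational grid) together with the careful bookkeeping of empty cuts during the diagonal extraction.
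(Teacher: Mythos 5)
Your proof is correct, but it follows a genuinely different route from the paper's in both directions. For the necessity, the paper first invokes Lemma~\ref{tbfegnum} to get total boundedness of $U(\al)$, embeds everything into the completion $(\widetilde{X},\widetilde{d})$, and rules out a limit point in $\widetilde{X}\setminus X$ by a contradiction argument through $\Gamma$-convergence (Remark~\ref{hmr}); you instead argue directly inside $X$ via Proposition~\ref{aeng}(\rmn2), landing $\{x_{n_k}\}$ near the compact cut $[u]_{\al/2}$ of the subsequential limit $u$ --- shorter, and it makes transparent where membership of the limit in $F_{USCG}(X)$ is used (you should just note explicitly that $[u]_{\al/2}\neq\emptyset$ is forced, since $x_{n_k}\in[u_{n_k}]_\al$ would otherwise give $H^*([u_{n_k}]_\al,\emptyset)=+\infty$). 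For the sufficiency, the paper diagonalizes over the truncated endographs ${\rm end}_{\al_k}\,u_n$ viewed as elements of $K(X\times[\al_k,1])$, applying Theorem~\ref{rce} in the hyperspace of the product space and assembling the limit endograph $v=\bigcup_k v^k\cup(X\times\{0\})$ directly, which forces the case split on $\liminf_{n\to\infty} S_{u_n}$ and the verification $v\in F_{USCG}(X)^e$ via Proposition~\ref{usce}; you diagonalize level-by-level over rational cuts in $(K(X)\cup\{\emptyset\},H)$, build the limit through the representation theorem (Proposition~\ref{fus}), and prove convergence with a finite rational grid plus the shared floor $X\times\{0\}$. Your decomposition avoids the case split entirely (empty cuts are absorbed into the subsequence extraction), stays at the more elementary level of $K(X)$ rather than the product hyperspace, and your final $2\var$-estimate is the standard skeleton-of-levels computation; the paper's version, in exchange, reuses its product-space machinery (Propositions~\ref{bpe}, \ref{bpec}, \ref{usce}) and produces the limit endograph in closed form. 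Both arguments ultimately rest on the same key external input, Theorem~\ref{rce}.
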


\begin{proof}

\textbf{\emph{Necessity}}. Suppose that $U$ is relatively compact in $(F_{USCG} (X), H_{\rm end})$.
Let $\al\in (0,1]$.
Then by Lemma \ref{tbfegnum},
 $U(\al)$ is totally bounded.
Hence
$U(\al)$ is relatively compact in $(\widetilde{X}, \widetilde{d})$.

To show that $U(\al)$ is relatively compact in $(X,d)$, we proceed by  contradiction.
If this were not the case, then
there exists a sequence $\{x_n\}$ in $U(\al)$
such that $\{x_n\}$ converges to $x\in \widetilde{X} \setminus X$ in $(\widetilde{X}, \widetilde{d})$.

Assume that $x_n \in [u_n]_\al$ and $u_n \in U$, $n=1,2,\ldots$.
From the relative compactness of $U$,
there is a subsequence $\{u_{n_k}, k=1,2,\ldots\}$ of $\{u_n\}$
such that 
$\{u_{n_k}\}$
converges to $u$ in $(F_{USCG}(X), H_{\rm end})$, which is equivalent to
 $\{j(u_{n_k})\}$ converges to $j(u)$ in $(F_{USCG}(\widetilde{X}), H_{\rm end})$.
Then by Remark \ref{hmr}, $\lim_{k\to\infty}^{(\Gamma)} j(u_{n_k}) \, = j(u)$;
that is,
$\lim_{k\to \infty}^{(K)} j(u_{n_k})^e \, = j(u)^e$ according to $(\widetilde{X}\times[0,1], \overline{\widetilde{d}})$.
Notice
that
$(x_{n_k}, \al) \in j(u_{n_k})^e $ for $k=1,2,\ldots$, and that
$\{(x_{n_k}, \al)\}$ converges to $(x, \al)$ in $(\widetilde{X}\times[0,1], \overline{\widetilde{d}})$.
Thus
$(x,\al) \in j(u)^e$. This means that $x\in \langle j(u)^e\rangle_\al =\langle u^e\rangle_\al$, which contradicts $x\in \widetilde{X} \setminus X$.

It can be seen that
the necessity part of Theorem 7.9 in \cite{huang719} can be verified in a similar fashion to the necessity
part of this theorem.

\textbf{\emph{Sufficiency}}. \
Suppose that
$U(\al)$
is relatively compact in $(X, d)$ for each $\al \in (0,1]$.
To show that $U$ is relatively compact in $(F_{USCG} (X), H_{\rm end})$,
we only need to show
that each sequence in $U$ has a convergent subsequence in $(F_{USCG} (X), H_{\rm end})$.

Let $\{u_n\}$ be a sequence in $U$. We split the proof into two cases.

\textbf{Case (\rmn1)}
$\liminf_{n\to\infty} S_{u_n} = 0$; that is, there is a subsequence $\{u_{n_k}\}$ of $\{u_n\}$
such that
$\lim_{k\to\infty} S_{u_{n_k}} = 0$. Then clearly $H_{\rm end} (u_{n_k}, \emptyset_{F(X)}) = S_{u_{n_k}} \to 0$
as $k \to \infty$.
Since $\emptyset_{F(X)} \in F_{USCG} (X)$, $\{u_{n_k}\}$ is a convergent subsequence in $(F_{USCG} (X), H_{\rm end})$.

\textbf{Case (\rmn2)}
$\liminf_{n\to\infty} S_{u_n} >0$; that is, there is a $\xi>0$ and an $N \in \mathbb{N}$
such that $[u_n]_{\xi} \not= \emptyset$
for all $n\geq N$.

First we claim the following property:
\begin{description}
  \item[(a)]
Let $\al\in (0,1]$ and let $S$ be a subset of $U$ with $[u]_\al \not= \emptyset$ for each $u\in S$. Then
$\{ {\rm end}_\al u: u\in S \}$ is a relatively compact set in $(K(X\times [\al,1]), H)$.
\end{description}

Below we show (a).
First we verify that for each $u\in S$, ${\rm end}_\al\, u \in K(X\times [\al,1])$. Let $u\in S$. Clearly ${\rm end}_\al\, u \not= \emptyset$ as
${\rm end}_\al\, u\supseteq [u]_\al\times \{\al\}$.
Note that for each $\beta\in [\al,1]$,
$\langle {\rm end}_\al\, u  \rangle_\beta = [u]_\beta\in K(X)\cup\{\emptyset\}$.
So
by Proposition \ref{bpec}(\rmn3),
${\rm end}_\al\, u \in K(X\times [\al,1])$.

As $U(\al)$ is relatively compact in $(X,d)$,
$U(\al) \times [\al, 1]$ is relatively compact in $(X\times [\al,1], \overline{d})$.
Since
$\bigcup_{u\in S} {\rm end}_\al u $ is
 a subset of $ U(\al) \times [\al, 1]$,
then
$\bigcup_{u\in S} {\rm end}_\al u $
is also a
relatively compact set in $(X\times [\al,1], \overline{d})$.
Thus
by Theorem \ref{rce}, $\{ {\rm end}_\al u: u\in S \}$ is relatively compact in $(K(X\times [\al,1]), H)$.
So affirmation (a) is true.

 Take a sequence $ \{\alpha_k, \ k=1,2,\ldots\}$ which satisfies that $0<\al_{k+1} <  \al_k \leq \min\{\xi, \frac{1}{k}\}$ for $k=1,2,\ldots$. We can see that $\alpha_k \to 0$ as $k\to\infty$.

By affirmation (a),
$\{ {\rm end}_{\al_1}\, u_n: n=N,N+1,\ldots\}$ is relatively compact in $(K(X\times [\al_1,1]), H)$.
So there is a subsequence $\{u_{n}^{(1)}\}$ of $\{u_n: n\geq N\}$ and $v^1 \in K(X\times [\al_1,1])$ such that $H({\rm end}_{\al_1}\, u_{n}^{(1)},  v^1 ) \to 0$.
Clearly, $\{u_{n}^{(1)}\}$ is also a subsequence of $\{u_n\}$.

Again using affirmation (a),
$\{ {\rm end}_{\al_2}\, u_n^{(1)} \}$ is relatively compact in $(K(X\times [\al_2,1]), H)$.
So there is a subsequence $\{u_{n}^{(2)}\}$ of $\{u_n^{(1)}\}$ and $v^2 \in K(X\times [\al_2,1])$ such that $H({\rm end}_{\al_2}\, u_{n}^{(2)},  v^2 ) \to 0$ as $n\to\infty$.

Repeating the above procedure, we can obtain $\{u_{n}^{(k)}\}$
and
$v^{k}\in K(X\times [\al_k, 1])$, $k=1,2,\ldots$,
such that
for each $k=1,2,\ldots$, $\{u_{n}^{(k+1)}\}$ is a subsequence of $\{u_{n}^{(k)}\}$
and
$H({\rm end}_{\al_k}\, u_{n}^{(k)},  v^k ) \to 0$ as $n\to\infty$.

We can see that
the sequence $\{u_n^{(n)}:n=1,2,\ldots\}$ satisfies that
for each $\al_k$, $k=1,2,\ldots$,
$\lim_{n\to\infty} H({\rm end}_{\al_k}\, u_n^{(n)},   v^{k}) = 0$.
Since for each $k\in \mathbb{N}$, $\{u_{n}^{(k)}\}$ is a subsequence of $\{u_n: n\geq N\}$, we have that
for each $n,k\in \mathbb{N}$, $[u_{n}^{(k)}]_\xi \not= \emptyset$.

We claim that
\begin{description}
\item[(b)] Let $k_1$ and $k_2$ be in $\mathbb{N}$
with $k_1 < k_2$. Let $k\in \mathbb{N}$. Then
\\
(b-\romannumeral1) \ $
\langle v^{k_1} \rangle_{\al_{k_1}} \subseteq \langle v^{k_2} \rangle_{\al_{k_1}}$,
\\
(b-\romannumeral2)
 $\langle v^{k_1} \rangle_\al = \langle v^{k_2} \rangle_\al$ when $\al\in (\al_{k_1}, 1]$,
 \\
(b-\romannumeral3) $v^k \subseteq v^{k+1}$,
\\
(b-\rmn4) for each $\alpha,\beta$ in $[\al_k,1]$ with $\beta < \alpha$,
$\langle v^k\rangle_\al \subseteq \langle v^k\rangle_\beta $,
\\
(b-\rmn5) for each $\al \in (\al_k, 1]$,
$\langle v^k\rangle_\al = \bigcap_{\beta\in [\al_k, \al)} \langle v^k\rangle_\beta $,
\\
(b-\rmn6) for each $\al\in [0,1]$,
$\langle v^k \rangle_\alpha \in K(X) \cup \{\emptyset\}$.
 \end{description}

Below we show (b).
Note that $\{ u_n^{(k_2)}\}$ is a subsequence of $\{ u_n^{(k_1)}\}$
and that
$\al_{k_2} < \al_{k_1}$. Thus
by Theorem \ref{hkg}, for each $\al\in [\al_{k_1}, 1]$,
 \begin{align} \label{cner}
    &             \langle v^{k_1} \rangle_{\al}  \times \{\al\}   \nonumber \\
&=  \liminf_{n\to \infty} {\rm end}_{\al_{k_1}}\, u_n^{(k_1)} \cap (X \times \{\al\}) \nonumber\\
&
\subseteq \liminf_{n\to \infty} {\rm end}_{\al_{k_2}}\, u_n^{(k_2)} \cap (X \times \{\al\}) \nonumber\\
&
=\langle v^{k_2} \rangle_{\al}  \times \{\al\}.
                       \end{align}
So (b-\romannumeral1) is true.

Let $\al\in [0,1]$ with $\al > \al_{k_1}$.
Set $A :=  \limsup_{n\to \infty} {\rm end}_{\al_{k_1}}\, u_n^{(k_2)}  \cap (X \times \{\al\})$
and
$B
:=\limsup_{n\to \infty} {\rm end}_{\al_{k_2}}\, u_n^{(k_2)}  \cap (X \times \{\al\})$. We show that $A=B$.

Clearly $A\subseteq B$.
Given
$(x,\al) \in B$.
Then there is a sequence
$\{(x_m, \beta_m)\}$ and a subsequence
$\{u_{n_m}^{(k_2)}\}$ of $\{u_n^{(k_2)}\}$
such that for each $m=1,2,\ldots$, $(x_m, \beta_m) \in
 {\rm end}_{\al_{k_2}}\, u_{n_m}^{(k_2)}$,
and
$\overline{d} ((x_m, \beta_m), (x,\al))\to 0$
as $m\to\infty$.
So
there is an $M$ such that for all $m>M$, $\beta_m >  \al_{k_1}$, i.e.
$(x_m, \beta_m) \in
{\rm end}_{\al_{k_1}}\, u_{n_m}^{(k_2)}
$.
Thus
$(x, \al) \in
\limsup_{n\to \infty} {\rm end}_{\al_{k_1}}\, u_n^{(k_2)}
$.
Clearly $(x, \al) \in X\times \{\al\}$.
Hence $(x, \al) \in A$.
Since $(x,\al) \in B$ is arbitrary,
it follows
that
$
B\subseteq A$.
So
$A=B$.

Now by Theorem \ref{hkg}, for each $\al\in (\al_{k_1}, 1]$,
 \begin{align}  \label{cme}
    &             \langle v^{k_1} \rangle_\al  \times \{\al\} \nonumber \\
&=  \limsup_{n\to \infty} {\rm end}_{\al_{k_1}}\, u_n^{(k_1)}  \cap (X \times \{\al\})\nonumber \\
&\supseteq\limsup_{n\to \infty} {\rm end}_{\al_{k_1}}\, u_n^{(k_2)}  \cap (X \times \{\al\})\nonumber\\
&
=\limsup_{n\to \infty} {\rm end}_{\al_{k_2}}\, u_n^{(k_2)}  \cap (X \times \{\al\}) \nonumber\\
&
=\langle v^{k_2} \rangle_\al  \times \{\al\}.
                       \end{align}
Hence
 by \eqref{cner} and \eqref{cme},
$ \langle v^{k_1} \rangle_\al =  \langle v^{k_2} \rangle_\al $ for $\al\in (\al_{k_1}, 1]$. So (b-\romannumeral2) is true.

(b-\romannumeral3)
follows immediately from (b-\romannumeral1) and (b-\romannumeral2).
The proof of (b-\rmn4) is similar to that of
(\rmn1) in the proof of Proposition \ref{usce}.
As $v^k\in K(X\times [\al_k,1])$,
by Proposition \ref{ucen}, (b-\rmn4)
is equivalent to
(b-\rmn5).
As
$v^{k}\in K(X\times [\al_k, 1])\subset K(X\times [0, 1])$,
by Proposition \ref{bpe}(\rmn3), (b-\rmn6) is true.
Thus (b) is proved.

Define a subset $v$ of $X \times [0,1]$
given by
\begin{equation}\label{fve}
v = \bigcup_{k=1}^{+\infty} v^k \cup ( X\times \{0\}).
\end{equation}
Let $k\in \mathbb{N}$.
Then from affirmation (b),
$\langle v \rangle_\al=  \langle v^k \rangle_\al$ for each $\al\in (\al_{k}, 1]$. This means that
\begin{gather}\label{vceg}
 v \cap (X\times (\al_k, 1]) = v^{k} \cap (X\times (\al_k, 1]) \subseteq v^k.
   \end{gather}
So
\begin{equation}\label{vce}
   \langle v\rangle_\al =
\left\{
  \begin{array}{ll}
 \langle v^k \rangle_\al,  & \mbox{ if }  k\in \mathbb{N} \hbox{ and }  \al\in (\al_k,1],\\
 X, &  \mbox{ if } \al=0,
  \end{array}
\right.
\end{equation}

We show that
$v\in C(X\times [0,1])$. To this end,
let $\{(x_l, \gamma_l)\}$ be a sequence in $v$
which converges to an element $(x,\gamma)$
in $X\times [0,1]$. If $\gamma=0$, then clearly $(x,\gamma) \in v$. If $\gamma>0$,
then there is a $k_0\in \mathbb{N}$ such that
$\gamma > \al_{k_0}$. Hence there is an $L$ such that $\gamma_l > \al_{k_0}$ when $l\geq L$.
So by \eqref{vceg},
 $(x_l, \gamma_l) \in v^{k_0}$ when $l\geq L$.
Since
$v^{k_0} \in K(X\times [\al_{k_0}, 1])$, it follows that $(x,\gamma) \in v^{k_0}\subset v$.

We claim that
\begin{description}
  \item[(c)] $\lim_{n\to\infty} H({\rm end}\, u_n^{(n)},  v) =0$ and $v\in F_{USCG} (X)^e$.
\end{description}

Let $n,k\in \mathbb{N}$.
Obviously $S_{{\rm end}_0^{\al_k} u_n^{(n)}} \leq \al_k$.
As $\langle {\rm end}_0^{\al_k}\, u_n^{(n)}\rangle_{\al_k} = [u_n^{(n)}]_{\al_k}\supseteq [u_n^{(n)}]_\xi \not= \emptyset$, it follows that $S_{{\rm end}_0^{\al_k} u_n^{(n)}} = \al_k$.
Clearly
$S_{v \cap (X\times [0, \al_k])} \leq \al_k$. So
\begin{gather}
 H^* ({\rm end}_0^{\al_k} \, u_n^{(n)},  v) \leq H^* ({\rm end}_0^{\al_k} \, u_n^{(n)}, X\times\{0\})
=\al_k, \label{fem}
\\
 H^*( v \cap (X\times [0, \al_k]),\, {\rm end}\, u_n^{(n)})
\leq H^* (v \cap (X\times [0, \al_k]), X\times\{0\})
\leq \al_k.\label{pem}
\end{gather}
(Let $k\in \mathbb{N}$. As $v^k\not=\emptyset$, by (b-\rmn4), $\langle v^k\rangle_{\al_k}=\bigcup_{\gamma\in [\al_k,1]} \langle v^k\rangle_{\gamma} \not= \emptyset$.
Then by \eqref{fve}, $\langle v\rangle_{\al_k} \supseteq \langle v^k\rangle_{\al_k} \not= \emptyset$.
Hence
$\langle v \cap (X\times [0, \al_k]) \rangle_{\al_k}= \langle v\rangle_{\al_k} \not=\emptyset$.
Thus $S_{v \cap (X\times [0, \al_k])} = \al_k$.
So
 the last ``$\leq$'' in \eqref{pem} can be replaced
by ``=''. )

By \eqref{fve} and \eqref{fem}, for each $n,k\in \mathbb{N}$,
\begin{align}\label{hle}
  H^* ({\rm end}\, u_n^{(n)},  v) &= \max\{  H^* ({\rm end}_{\al_k} \, u_n^{(n)},  v) ,\
 H^* ({\rm end}_0^{\al_k} \, u_n^{(n)},  v)    \} \nonumber \\
&
\leq \max\{H^*({\rm end}_{\al_k}\, u_n^{(n)},\,  v^{k}),\ \al_k\}.
\end{align}
By \eqref{vceg} and \eqref{pem}, for each $n,k\in \mathbb{N}$,
\begin{align}\label{hre}
H^* ( v, {\rm end}\, u_n^{(n)}) &  = \max\{
\sup_{(x, \gamma) \in  v \cap (X\times (\al_k, 1]} \overline{d} ( (x, \gamma),  \, {\rm end}\, u_n^{(n)} ),
\ H^*( v \cap (X\times [0, \al_k]),\, {\rm end}\, u_n^{(n)})\} \nonumber
\\
& \leq \max\{H^* ( v^{k},\, {\rm end}_{\al_{k}}\, u_n^{(n)}),    \ \al_k\}.
\end{align}
Clearly,
\eqref{hle} and \eqref{hre} imply that for each $n,k\in \mathbb{N}$,
\begin{align}\label{hlre}
  H ({\rm end}\, u_n^{(n)}, \, v) & \leq     \max\{  H ({\rm end}_{\al_k} \, u_n^{(n)}, \,  v^{k}),\ \al_k\}.
\end{align}

Now we show that
\begin{equation}\label{lgce}
\lim_{n\to\infty} H ({\rm end}\, u_n^{(n)},  v) =0.
\end{equation}
To see this,
let $\varepsilon>0$.
Notice that $\al_k \to 0$ and for each $\al_k$, $k=1,2,\ldots$,
$\lim_{n\to\infty} H({\rm end}_{\al_k}\, u_n^{(n)},   v^{k}) = 0$.
Then there is an $\al_{k_0}$ and an $N\in \mathbb{N}$ such that
 $\al_{k_0} < \varepsilon$
and
$ H({\rm end}_{\al_{k_0}}\, u_n^{(n)},   v^{k_0}) <\varepsilon$ for all $n\geq N$.
Thus by \eqref{hlre},
$H ({\rm end}\, u_n^{(n)},  v)<\varepsilon$ for all $n\geq N$. So \eqref{lgce} is true.

Since the sequence $\{ {\rm end}\, u_n^{(n)} \}$ is in $F_{USC}(X)^e$
and
$\{ {\rm end}\, u_n^{(n)} \}$ converges to $v$ in $(C(X \times [0,1]), H)$,
by
Proposition \ref{usce}, it follows that $v\in F_{USC}(X)^e$.
To show $v\in F_{USCG}(X)^e$, we only need
to
show that for each $\al\in (0,1]$, $\langle v\rangle_\al \in K(X) \cup \{\emptyset\}$.

Let $\al\in (0,1]$. Then
there is a $k\in \mathbb{N}$ such that
 $\al\in (\al_k,1]$.
 Thus
by \eqref{vce} and (b-\rmn6), $\langle v\rangle_\al =   \langle v^k \rangle_\al \in K(X) \cup \{\emptyset\}$. Hence $v\in F_{USCG}(X)^e$.
So affirmation (c) is proved.

From
 affirmation (c), we have
that
 $\{u_n^{(n)}\}$ is a convergent sequence in $(F_{USCG} (X), H_{\rm end})$.
Note that $\{u_n^{(n)}\}$ is a subsequence of $\{u_n\}$.
Thus the proof is completed.

\end{proof}

\begin{re}
{\rm
We make some additions for Theorem \ref{rcfegnum} and its proof.
Below, we continue to use the notations used for the proof.

(\uppercase\expandafter{\romannumeral1})
We can also show that $v\in F_{USCG}(X)^e$ as follows.

By Proposition \ref{repu},
to show $v\in F_{USCG}(X)^e$, it suffices
to
show that (\rmn1) for each $\al\in(0,1]$, $\langle v\rangle_\al \in K(X) \cup \{\emptyset\}$; (\romannumeral2) \ $\langle v\rangle_0 = X $;
(\rmn3)
 for each $\al\in(0,1]$, $\langle v\rangle_\al = \bigcap_{\beta < \al} \langle v \rangle_\beta$.

(\rmn1) is shown (see the penultimate paragraph of the proof of Theorem
\ref{rcfegnum}). (\rmn2) is obvious.
Now we show (\rmn3). First we
verify the following property (f):
\\
(f) Let $\al,\beta\in [0,1]$ with $\beta<\al$. Then $\langle v \rangle_\al \subseteq \langle v \rangle_\beta$.

If $\beta=0$ then $\langle v \rangle_\beta= X $ and hence $\langle v \rangle_\al \subseteq \langle v \rangle_\beta$.
If $\beta>0$ then there is an $\al_k$ such that $\al, \beta \in (\al_k, 1]$.
From \eqref{vceg} and (b-\rmn4), it follows that $\langle v \rangle_\al
=\langle v^k \rangle_\al  \subseteq \langle v^k \rangle_\beta = \langle v \rangle_\beta$.
So (f) is true.

Let $\al\in(0,1]$. Then there is an $\al_k$ such that $\al\in (\al_k, 1]$.
So by \eqref{vceg}, (b-\rmn5) and (f),
$\langle v\rangle_\al = \langle v^k\rangle_\al
= \bigcap_{\beta\in [\al_k, \al)} \langle v^k\rangle_\beta
= \bigcap_{\beta < \al} \langle v \rangle_\beta$.
Thus (\rmn3) is proved.
So $v\in F_{USCG}(X)^e$.

(\uppercase\expandafter{\romannumeral2})
We can also show that for each $k\in \mathbb{N}$, $S_{v \cap (X\times [0, \al_k])} = \al_k$ by using affirmation (c).

Let $k\in \mathbb{N}$. As $\lim_{n\to\infty} H({\rm end}\, u_n^{(n)},  v) =0$ and for each $n\in \mathbb{N}$, $S_{{\rm end}\, u_n^{(n)}}=S_{u_n^{(n)}} \geq \xi$,
by Proposition \ref{semg}, $S_{v}\geq \xi>0$. Then $v\not=\emptyset$.
As $v\in F_{USCG} (X)^e \setminus \{\emptyset\}$ and $\al_k \in [0,\xi] \subseteq [0,S_v]$, by Proposition \ref{mar}(\rmn2-2),
$\langle v \cap (X\times [0, \al_k]\rangle_{\al_k} =  \langle v \rangle_{\al_k} \not=\emptyset$.
Thus $S_{v \cap (X\times [0, \al_k]}=\al_k$.

(\uppercase\expandafter{\romannumeral3})
We claim that
\begin{description}
  \item[(d)]
(d-\rmn1) For each $k\in \mathbb{N}$, $S_v= S_{v^k} \geq \xi$.
\\
(d-\rmn2)
For each $k\in \mathbb{N}$ with $k>1$,
$\langle v\rangle_{S_v} = \langle v^k\rangle_{S_{v^k}}$.
\\
(d-\rmn3) Let $k\in \mathbb{N}$. Then for each $\al\in [\al_k, S_{v^k}]$, $\langle v^k \rangle_{\al} \in K(X)$; for each $\al\in [0,1]\setminus [\al_k, S_{v^k}]$, $\langle v^k \rangle_{\al}=\emptyset$.
\end{description}

Observe that
for each $n,k\in \mathbb{N}$,
$S_{{\rm end}_{\al_k}\, u_{n}^{(k)}} = S_{u_{n}^{(k)}} \geq \xi$.
Let $k\in \mathbb{N}$.
As $H({\rm end}_{\al_k}\, u_{n}^{(k)},  v^k ) \to 0$, by Proposition \ref{semg},
$S_{v^k} = \lim_{n\to\infty} S_{{\rm end}_{\al_k}\, u_{n}^{(k)}}\geq \xi$.

Let $k\in \mathbb{N}$ with $k>1$. Since
$\{u_{n}^{(k)} \}$ is a subsequence of $\{u_{n}^{(1)} \}$,
we have that
$S_{v^k} = \lim_{n\to\infty} S_{u_{n}^{(k)}} =  \lim_{n\to\infty} S_{u_{n}^{(1)}} = S_{v^1}$.
Note that
$\al_k< \xi\leq 1$.
Thus
by \eqref{vceg}, $S_v= S_{v^k}$ and $\langle v\rangle_{S_v} = \langle v^k\rangle_{S_{v^k}}$.
So (d-\rmn1) and (d-\rmn2) are true.

Let $k\in \mathbb{N}$.
As $v^k\in K(X\times [\al_k,1])$, by (b-\rmn4) and Proposition \ref{bpue}(\rmn3), and by Remark \ref{bre}(\rmn1),
(d-\rmn3) is true.
So (d) is proved.
}
\end{re}

\begin{tm} \label{tbfegnu}
  Let $U$ be a subset of $F_{USCG} (X)$. Then $U$ is totally bounded in $(F_{USCG} (X), H_{\rm end})$
if and only if
$U(\al)$
is totally bounded in $(X,d)$ for each $\al \in (0,1]$.
\end{tm}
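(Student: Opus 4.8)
The plan is to handle the two implications separately, with the forward (necessity) direction being essentially free and the reverse (sufficiency) direction routed through the completion $(\widetilde{X}, \widetilde{d})$ so as to reduce total boundedness to relative compactness, which is already characterized.

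For necessity I would simply invoke Lemma \ref{tbfegnum}: if $U$ is totally bounded in $(F_{USCG}(X), H_{\rm end})$, then $U(\al)$ is totally bounded in $(X,d)$ for every $\al \in (0,1]$. There is nothing further to do in this direction.

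For sufficiency, suppose $U(\al)$ is totally bounded in $(X,d)$ for each $\al \in (0,1]$. The key idea is that total boundedness in $X$ upgrades to relative compactness once we pass to the complete space $\widetilde{X}$, at which point Theorem \ref{rcfegnum} becomes available. Concretely, since total boundedness of a subset does not depend on the ambient space and $\widetilde{X}$ is complete, each $U(\al)$ is totally bounded in $(\widetilde{X}, \widetilde{d})$ and hence relatively compact there. Viewing $U$ as a subset of $F_{USCG}(\widetilde{X})$ through the isometric embedding $k$, and recalling that $[j(u)]_\al = [u]_\al$ for $\al \in (0,1]$ so that the union $U(\al)$ denotes the same set whether computed in $X$ or in $\widetilde{X}$, Theorem \ref{rcfegnum} applied over the space $\widetilde{X}$ yields that $U$ is relatively compact in $(F_{USCG}(\widetilde{X}), H_{\rm end})$.

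To finish, I would use that a relatively compact set is always totally bounded (this direction needs no completeness), so $U$ is totally bounded in $(F_{USCG}(\widetilde{X}), H_{\rm end})$; and since $(F_{USCG}(X), H_{\rm end})$ sits isometrically inside $(F_{USCG}(\widetilde{X}), H_{\rm end})$ via $k$ and total boundedness of a set is the same measured in the subspace or in the ambient space, $U$ is totally bounded in $(F_{USCG}(X), H_{\rm end})$, as desired. I do not expect a genuine obstacle here: the proof is largely bookkeeping, and the only points requiring care are confirming that the $\al$-cuts are preserved under $j$ (so that $U(\al)$ is unambiguous across $X$ and $\widetilde{X}$) and using only the one-directional implication \emph{relatively compact $\Rightarrow$ totally bounded}, rather than its converse, which would require completeness of $F_{USCG}(X)$ that we neither have nor need.
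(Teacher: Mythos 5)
Your proposal is correct and follows essentially the same route as the paper: necessity via Lemma \ref{tbfegnum}, and sufficiency by passing to the completion $\widetilde{X}$, upgrading total boundedness of each $U(\al)$ to relative compactness there, applying Theorem \ref{rcfegnum} over $\widetilde{X}$, and then descending from relative compactness to total boundedness, which transfers back to $(F_{USCG}(X), H_{\rm end})$. The extra care you take over the preservation of $\al$-cuts under $j$ and over using only the implication \emph{relatively compact} $\Rightarrow$ \emph{totally bounded} is exactly the bookkeeping the paper leaves implicit.
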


\begin{proof}
 \textbf{\emph{Necessity}}.
The necessity part is Lemma \ref{tbfegnum}.

\textbf{\emph{Sufficiency}}.
Suppose that
$U(\al)$ is totally bounded
in $(X, d)$ for each $\al \in (0,1]$.
Then
$U(\al)$ is relatively compact
in $(\widetilde{X}, \widetilde{d})$ for each $\al \in (0,1]$.
Thus by Theorem \ref{rcfegnum},
$U$ is
 relatively compact in
 $(F_{USCG} (\widetilde{X}), H_{\rm end})$.
Hence
$U$ is
 totally bounded in
 $(F_{USCG} (\widetilde{X}), H_{\rm end})$.
So clearly
$U$
is
 totally bounded in
 $(F_{USCG} (X), H_{\rm end})$.

\end{proof}

\begin{tm}\label{cfegum}
  Let $U$ be a subset of $F_{USCG} (X)$. Then the following are equivalent:
\\
(\rmn1)
 $U$ is compact in $(F_{USCG} (X), H_{\rm end})$;
\\
(\rmn2)  $U(\al)$
is relatively compact in $(X, d)$ for each $\al \in (0,1]$ and $U$ is closed in $(F_{USCG} (X), H_{\rm end})$;
\\
(\rmn3)
  $U(\al)$
is compact in $(X, d)$ for each $\al \in (0,1]$ and $U$ is closed in $(F_{USCG} (X), H_{\rm end})$.

\end{tm}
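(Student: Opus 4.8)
The plan is to deduce all three equivalences from Theorem \ref{rcfegnum} together with the elementary fact that, in any metric space, a subset is compact if and only if it is both relatively compact and closed. I would first settle $(\rmn1)\Leftrightarrow(\rmn2)$, then prove $(\rmn1)\Rightarrow(\rmn3)$ and observe that $(\rmn3)\Rightarrow(\rmn2)$ is trivial; these facts close the loop and give all the stated equivalences.

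For $(\rmn1)\Leftrightarrow(\rmn2)$, I would apply the above fact in the space $(F_{USCG}(X),H_{\rm end})$: condition $(\rmn1)$ is, by definition, the statement that $U$ is relatively compact in $(F_{USCG}(X),H_{\rm end})$ and is closed there. By Theorem \ref{rcfegnum}, relative compactness of $U$ in $(F_{USCG}(X),H_{\rm end})$ is equivalent to relative compactness of $U(\al)$ in $(X,d)$ for every $\al\in(0,1]$; substituting this into the previous sentence yields exactly $(\rmn2)$. The implication $(\rmn3)\Rightarrow(\rmn2)$ needs no work, because a compact subset of $(X,d)$ is relatively compact.

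The only substantive step is $(\rmn1)\Rightarrow(\rmn3)$. Assume $U$ is compact. By the equivalence $(\rmn1)\Leftrightarrow(\rmn2)$ already obtained, $U(\al)$ is relatively compact in $(X,d)$ for each $\al\in(0,1]$, so its closure is compact; it therefore suffices to show that each $U(\al)$ is closed in $(X,d)$. Fix $\al\in(0,1]$ and let $x\in\overline{U(\al)}$, so that $x=\lim_n x_n$ with $x_n\in U(\al)$, say $x_n\in[u_n]_\al$ for some $u_n\in U$. Since $U$ is compact in $(F_{USCG}(X),H_{\rm end})$, some subsequence $\{u_{n_k}\}$ converges to an element $u\in U$ in the metric $H_{\rm end}$. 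By Remark \ref{hmr} this yields $\lim_{k\to\infty}^{(\Gamma)}u_{n_k}=u$, i.e. ${\rm end}\,u=\lim_{k\to\infty}^{(K)}{\rm end}\,u_{n_k}$ in $(X\times[0,1],\overline{d})$. Now $(x_{n_k},\al)\in{\rm end}\,u_{n_k}$ for every $k$ and $(x_{n_k},\al)\to(x,\al)$, so by the definition of the Kuratowski lower limit $(x,\al)\in\liminf_{k\to\infty}{\rm end}\,u_{n_k}={\rm end}\,u$; hence $u(x)\geq\al$, that is $x\in[u]_\al\subseteq U(\al)$. Thus $U(\al)$ is closed, and being relatively compact it is compact, which is $(\rmn3)$.

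I expect the closedness argument in $(\rmn1)\Rightarrow(\rmn3)$ to be the main obstacle. Relative compactness of $U(\al)$ alone only places limit points in the completion $\widetilde{X}$ (as in the necessity part of Theorem \ref{rcfegnum}); the extra input is the compactness of $U$ itself, which produces a genuine limit fuzzy set $u\in U\subseteq F_{USCG}(X)$, forcing $[u]_\al\subseteq X$ and hence $x\in X$. Transferring the scalar convergence $x_n\to x$ into the membership $x\in[u]_\al$ is exactly where one must pass from $H_{\rm end}$-convergence to $\Gamma$-convergence and invoke the Kuratowski lower limit. Everything else reduces to Theorem \ref{rcfegnum} and the standard metric characterization of compactness.
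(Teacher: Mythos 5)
Your proposal is correct and follows essentially the same route as the paper: both reduce $(\rmn1)\Leftrightarrow(\rmn2)$ to Theorem \ref{rcfegnum} via the fact that compact means relatively compact plus closed, note that $(\rmn3)\Rightarrow(\rmn2)$ is trivial, and close the loop by proving each $U(\al)$ is closed through extracting an $H_{\rm end}$-convergent subsequence from the compactness of $U$, passing to $\Gamma$-convergence by Remark \ref{hmr}, and placing $(x,\al)$ in $\liminf_{k\to\infty}{\rm end}\,u_{n_k}={\rm end}\,u$. The only cosmetic difference is that the paper labels this last step as $(\rmn2)\Rightarrow(\rmn3)$ rather than $(\rmn1)\Rightarrow(\rmn3)$, which is immaterial once $(\rmn1)\Leftrightarrow(\rmn2)$ is in hand.
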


\begin{proof}
By Theorem \ref{rcfegnum},
  (\romannumeral1) $\Leftrightarrow$ (\romannumeral2).
  Obviously (\romannumeral3) $\Rightarrow$ (\romannumeral2).
We
shall complete the proof by showing that
 (\romannumeral2) $\Rightarrow$
 (\romannumeral3).
 Suppose that (\romannumeral2) is true.
To verify (\romannumeral3),
it suffices to
show that $U(\al)$ is closed in $(X,d)$ for each $\al\in (0,1]$.
To do this,
let $\al\in (0,1]$
and
let $\{x_n\}$ be a sequence in $U(\al)$ with $\{x_n\}$ converges to an element $x$ in $(X,d)$.
We only need to
show that $x\in U(\al)$.

Pick
a sequence $\{u_n\}$ in $U$
such that
$x_n \in [u_n]_\al$ for $n=1,2,\ldots$, which means
 that $(x_n,\al) \in {\rm end}\, u_n$
for $n=1,2,\ldots$.

From the equivalence of (\romannumeral1) and (\romannumeral2),
 $U$ is compact in $(F_{USCG} (X), H_{\rm end})$.
So
there exists a subsequence $\{u_{n_k}\}$ of $\{u_n\}$ and $u\in U$
such that $H_{\rm end}(u_{n_k}, u) \to 0$.
Hence
by Remark \ref{hmr}, $\lim_{n\to \infty}^{(\Gamma)}  u_{n_k} = u $.
Note that $(x,\al) = \lim_{k\to\infty}(x_{n_k}, \al)$.
Thus
$$(x,\al) \in \liminf_{n\to \infty}{\rm end}\, u_{n_k}
=
{\rm end}\, u.$$
So
$x\in [u]_\al$, and therefore $x\in U(\al)$.

It can be seen that
Theorem 7.10 in \cite{huang719} can be verified in a similar fashion to this theorem.

\end{proof}

We \cite{huang} gave the following characterizations of compactness in $(F_{USCG}(\mathbb{R}^m), H_{\rm end})$.

\begin{tm}\label{fuscbrcn}
(Theorem 7.1 in \cite{huang})
Let $U$ be a subset of $F_{USCG}(\mathbb{R}^m)$.
Then
  $U$ is a relatively compact set in $(F_{USCG}(\mathbb{R}^m), H_{\rm end})$
  if and only if $U(\al)$ is a bounded set in $\mathbb{R}^m$ when $\al \in (0,1]$.
\end{tm}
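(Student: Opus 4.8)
The plan is to obtain this statement as an immediate corollary of Theorem \ref{rcfegnum}, which has already settled the analogous question for an arbitrary metric space $(X,d)$. Specializing that theorem to the case $X=\mathbb{R}^m$ equipped with the Euclidean metric $d_m$, we obtain that $U$ is relatively compact in $(F_{USCG}(\mathbb{R}^m), H_{\rm end})$ if and only if $U(\al)$ is relatively compact in $(\mathbb{R}^m, d_m)$ for each $\al\in(0,1]$. Thus the only work remaining is to reconcile ``relatively compact in $(\mathbb{R}^m, d_m)$'' with ``bounded in $\mathbb{R}^m$''.

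This reconciliation is provided by the Heine--Borel theorem: since $(\mathbb{R}^m, d_m)$ is a complete, finite-dimensional normed space, a subset of $\mathbb{R}^m$ is relatively compact if and only if it is bounded. Consequently, for each fixed $\al\in(0,1]$, the set $U(\al)$ is relatively compact in $(\mathbb{R}^m, d_m)$ precisely when $U(\al)$ is bounded in $\mathbb{R}^m$.

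Chaining these two equivalences together yields the result: $U$ is relatively compact in $(F_{USCG}(\mathbb{R}^m), H_{\rm end})$ $\Leftrightarrow$ $U(\al)$ is relatively compact in $(\mathbb{R}^m, d_m)$ for every $\al\in(0,1]$ (by Theorem \ref{rcfegnum}) $\Leftrightarrow$ $U(\al)$ is bounded in $\mathbb{R}^m$ for every $\al\in(0,1]$ (by Heine--Borel). Both directions of the desired equivalence follow at once, so no separate necessity/sufficiency argument is needed.

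There is no genuine obstacle here: the entire analytic content of the theorem has been absorbed into the general Theorem \ref{rcfegnum}, and the passage from $\mathbb{R}^m$ to boundedness is the standard Heine--Borel fact. The only point worth isolating is that the equivalence ``relatively compact $\Leftrightarrow$ bounded'' is special to finite-dimensional spaces such as $\mathbb{R}^m$, and is exactly the place where the specific structure of $\mathbb{R}^m$ (as opposed to that of a general metric space) enters; this is precisely why the $\mathbb{R}^m$-statement is a corollary of the general one rather than the reverse.
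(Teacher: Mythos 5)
Your proposal is correct and is exactly the argument the paper gives: immediately after stating Theorems \ref{fuscbrcn}--\ref{fuscbcn}, the paper notes that for a set $S\subseteq\mathbb{R}^m$ boundedness, total boundedness and relative compactness are equivalent, and concludes that Theorem \ref{rcfegnum} implies Theorem \ref{fuscbrcn}. Nothing is missing; the specialization to $X=\mathbb{R}^m$ plus Heine--Borel is the whole proof.
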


\begin{tm} \label{fusbtoundcn} (Theorem 7.3 in \cite{huang})
Let $U$ be a subset of $ F_{USCG}(\mathbb{R}^m) $.
Then
  $U$ is a totally bounded set in $(F_{USCG}(\mathbb{R}^m), H_{\rm end})$
  if and only if, for each $\al \in (0,1]$,
 $ U(\al) $ is a bounded set in $\mathbb{R}^m $.

\end{tm}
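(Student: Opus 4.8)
The plan is to deduce this statement directly from the general characterization already established in Theorem \ref{tbfegnu}, which applies to fuzzy sets over an arbitrary metric space $X$. Specializing Theorem \ref{tbfegnu} to $(X,d) = (\mathbb{R}^m, d_m)$ yields at once that $U$ is totally bounded in $(F_{USCG}(\mathbb{R}^m), H_{\rm end})$ if and only if $U(\al)$ is totally bounded in $(\mathbb{R}^m, d_m)$ for every $\al \in (0,1]$. Thus the theorem follows as soon as one knows that, in $\mathbb{R}^m$, total boundedness and boundedness of a set coincide, and the remaining work is purely to record this classical equivalence.

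First I would note the two directions of that equivalence. A totally bounded set in any metric space has finite diameter and is therefore bounded. Conversely, a bounded subset of $\mathbb{R}^m$ is contained in a sufficiently large closed cube (equivalently, a closed ball), which is compact by the Heine--Borel theorem; since every subset of a compact metric space is totally bounded, the bounded set is itself totally bounded. Combining these with the specialized form of Theorem \ref{tbfegnu} gives the chain of equivalences: $U$ is totally bounded in $(F_{USCG}(\mathbb{R}^m), H_{\rm end})$ if and only if each $U(\al)$, $\al \in (0,1]$, is totally bounded in $\mathbb{R}^m$, which in turn holds if and only if each $U(\al)$ is bounded in $\mathbb{R}^m$. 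This last condition is exactly the asserted statement.

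No genuine obstacle arises here: all the analytic content has already been supplied in the proof of Theorem \ref{tbfegnu}, namely via Lemma \ref{tbfegnum} for the necessity and the relative-compactness argument of Theorem \ref{rcfegnum}, which passes through the completion $(\widetilde{X}, \widetilde{d})$, for the sufficiency. The only additional ingredient is the finite-dimensional fact that boundedness equals total boundedness in $\mathbb{R}^m$, so the result is properly a corollary of the general theorem, which is precisely the sense in which the paper claims that the conclusions of \cite{huang} are corollaries of the present ones. I would also remark that, because $\mathbb{R}^m$ is already complete, so that $\widetilde{\mathbb{R}^m} = \mathbb{R}^m$, the passage to the completion built into the proof of Theorem \ref{tbfegnu} becomes vacuous here; this is exactly why the characterization can be phrased solely in terms of boundedness rather than total boundedness of the cut-unions.
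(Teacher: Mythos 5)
Your proposal is correct and is essentially the paper's own argument: the paper likewise derives Theorem \ref{fusbtoundcn} by specializing Theorem \ref{tbfegnu} to $X=\mathbb{R}^m$ and invoking the classical equivalence of boundedness and total boundedness (and relative compactness) for subsets of $\mathbb{R}^m$. Your added remark about the completion being vacuous is a correct observation, though not needed for the deduction.
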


\begin{tm} \label{fuscbcn}(Theorem 7.2 in \cite{huang})
$U$ is a compact set in $(F_{USCG}(\mathbb{R}^m), H_{\rm end})$
  if and only if $U$ is a closed set in $(F_{USCG}(\mathbb{R}^m), H_{\rm end})$
  and
   $ U(\al) $ is a bounded set in $\mathbb{R}^m $
 when $\al \in (0,1]$.
\end{tm}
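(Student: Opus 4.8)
The plan is to obtain Theorem \ref{fuscbcn} as an immediate specialization of the general compactness characterization, Theorem \ref{cfegum}, to the particular metric space $X=\mathbb{R}^m$. Since $\mathbb{R}^m$ equipped with the Euclidean metric $d_m$ is a specific instance of the metric space $(X,d)$, Theorem \ref{cfegum} applies verbatim with $(X,d)$ replaced by $(\mathbb{R}^m, d_m)$. The only additional ingredient is the classical Heine--Borel property of finite-dimensional Euclidean space: a subset of $\mathbb{R}^m$ is relatively compact if and only if it is bounded.

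First I would invoke the equivalence (\rmn1)$\Leftrightarrow$(\rmn2) of Theorem \ref{cfegum} in the case $X=\mathbb{R}^m$. This says that $U$ is compact in $(F_{USCG}(\mathbb{R}^m), H_{\rm end})$ if and only if $U$ is closed in $(F_{USCG}(\mathbb{R}^m), H_{\rm end})$ and $U(\al)$ is relatively compact in $(\mathbb{R}^m, d_m)$ for every $\al\in(0,1]$. This already delivers the topological half of the desired statement (the closedness condition) with no further work. Next I would replace the phrase ``$U(\al)$ is relatively compact in $\mathbb{R}^m$'' by ``$U(\al)$ is bounded in $\mathbb{R}^m$''. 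For each fixed $\al\in(0,1]$ these two conditions coincide by Heine--Borel, since in $\mathbb{R}^m$ a set has compact closure precisely when it is bounded. Substituting this equivalence into the characterization from the previous step yields exactly the asserted statement: $U$ is compact if and only if $U$ is closed and $U(\al)$ is bounded for all $\al\in(0,1]$.

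I do not anticipate a genuine obstacle here, because the result is a corollary rather than an independent theorem, with all the substantive work already carried out in establishing Theorem \ref{cfegum}. The only point requiring mild care is to respect the quantifier structure of Theorem \ref{cfegum}(\rmn2): boundedness is to be verified cut-by-cut, separately for each $\al\in(0,1]$, and no uniformity across $\al$ is claimed. It is also worth remarking that one could equally route the argument through the equivalence (\rmn1)$\Leftrightarrow$(\rmn3) of Theorem \ref{cfegum}, since in $\mathbb{R}^m$ a bounded closed cut is compact; both paths reach the same conclusion.
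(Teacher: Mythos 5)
Your proposal is correct and matches the paper's own derivation: the paper likewise obtains Theorem \ref{fuscbcn} as a corollary of Theorem \ref{cfegum} by specializing to $X=\mathbb{R}^m$ and invoking the equivalence of boundedness and relative compactness (Heine--Borel) for subsets of $\mathbb{R}^m$. No gaps.
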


 Let $S$ be a set in $\mathbb{R}^m$.
Then the following properties are equivalent.
\\
(\romannumeral1) \ $S$ is a bounded set in $\mathbb{R}^m $.
\\
(\romannumeral2) \ $S$ is a totally bounded set in $\mathbb{R}^m $.
\\
(\romannumeral3) \ $S$ is a relatively compact set in $\mathbb{R}^m $.

Using the above well-known fact, we can see
that Theorem \ref{rcfegnum} implies Theorem \ref{fuscbrcn};
Theorem \ref{tbfegnu} implies Theorem \ref{fusbtoundcn};
Theorem \ref{cfegum} implies Theorem \ref{fuscbcn}.

So
the
characterizations of
relative compactness,
total boundedness, and compactness in
$(F_{USCG} (\mathbb{R}^m), H_{\rm end})$ given in our previous work \cite{huang}
are corollaries of
the characterizations of relative compactness,
total boundedness, and compactness of $(F_{USCG} (X), H_{\rm end})$
given in this section, respectively.

Furthermore,
the characterizations of relative compactness,
total boundedness, and compactness of $(F_{USCG} (X), H_{\rm end})$
given in this section illustrate
the relationship between relative compactness,
total boundedness, and compactness of a set in $F_{USCG} (X)$
and that of the union of its elements' $\al$-cuts.

From above discussions, we can see
that
the characterizations of relative compactness,
total boundedness, and compactness of $(F_{USCG} (X), H_{\rm end})$
given in this section significantly
improve
the
characterizations of
relative compactness,
total boundedness, and compactness in
$(F_{USCG} (\mathbb{R}^m), H_{\rm end})$ given in our previous work \cite{huang}.

\begin{re}
  {\rm
The following clauses (\romannumeral1) and (\romannumeral2) are
pointed out
in Remark 5.1 of chinaXiv:202107.00011v2 (we submitted it on 2021-07-22).
\\
(\romannumeral1) $(F^1_{USCG}(X), H_{\rm end})$ can be treated as a subspace of $(C(X \times [0, 1]), H)$ by seeing
each $u \in F^1_{USCG}(X)$ as its endograph.
 \\
(\romannumeral2)  We can discuss the properties of $(F^1_{USCG}(X), H_{\rm end})$ by treating $(F^1_{USCG}(X), H_{\rm end})$ as a subspace of $(C(X \times [0, 1]), H)$.
These properties include characterizations of
total boundedness, relative compactness and compactness of $(F^1_{USCG}(X), H_{\rm end})$.

In this paper, we treat $(F_{USCG}(X), H_{\rm end})$ as a subspace of $(C(X \times [0, 1]), H)$ to discuss the properties of $(F_{USCG}(X), H_{\rm end})$.

  }
\end{re}

At the end of this section, we illustrate that
Theorems
\ref{rce}, \ref{tbe} and \ref{come} can be seen as special cases of
Theorems \ref{rcfegnum},
\ref{tbfegnu},
and \ref{cfegum}, respectively. We begin with some propositions.

The following proposition follows immediately from the basic definitions.

\begin{pp}\label{brn}
Let $A$ be a subset of $X$.
\\
(\romannumeral1)
 The conditions
(\romannumeral1-1) $A$ is a set in $C(X)$,
and
(\romannumeral1-2) $\chi_{A}$ is a fuzzy set in $F_{USC}(X)$,
are equivalent.
\\
(\romannumeral2)
The conditions
(\romannumeral2-1)
$A$ is a set in $K(X)$, (\romannumeral2-2) $\chi_{A}$ is a fuzzy set in $F_{USCB}(X)$, and (\romannumeral2-3) $\chi_{A}$ is a fuzzy set in $F_{USCG}(X)$,
are equivalent.

\end{pp}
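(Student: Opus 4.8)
The plan is to reduce both equivalences to direct computations of the $\al$-cuts of the characteristic function $\chi_A$. First I would record the elementary fact that, for any $A\subseteq X$, the cuts of $\chi_A$ are $[\chi_A]_\al = A$ for every $\al\in(0,1]$ and $[\chi_A]_0 = \overline{\{\chi_A>0\}} = \overline{A}$. With these formulas in hand, each of the three memberships in question becomes a statement purely about $A$ and its closure $\overline{A}$, so the proposition falls out by matching definitions.

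For part (\rmn1), I would note that $\chi_A\in F_{USC}(X)$ means precisely that $[\chi_A]_\al\in C(X)\cup\{\emptyset\}$ for all $\al\in[0,1]$. The cut at $\al=0$ equals $\overline{A}$, which is always closed and hence imposes no restriction; the cuts at $\al\in(0,1]$ all equal $A$, so the upper semi-continuity condition is equivalent to $A$ being closed. For a nonempty $A$ this is exactly $A\in C(X)$, giving (\rmn1-1)$\Leftrightarrow$(\rmn1-2).

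For part (\rmn2), I would run the cycle (\rmn2-1)$\Rightarrow$(\rmn2-2)$\Rightarrow$(\rmn2-3)$\Rightarrow$(\rmn2-1), using only two standard metric-space facts: a compact set is closed, and a closed subset of a compact set is compact. If $A\in K(X)$, then $A$ is closed and compact, so $\overline{A}=A$ is compact and, by part (\rmn1), $\chi_A\in F_{USC}(X)$; since $[\chi_A]_0=\overline{A}\in K(X)$, we get $\chi_A\in F_{USCB}(X)$. If $\chi_A\in F_{USCB}(X)$, then $\chi_A\in F_{USC}(X)$ forces $A$ closed, whence $[\chi_A]_0=\overline{A}=A$ is compact; therefore $[\chi_A]_\al=A\in K(X)\cup\{\emptyset\}$ for each $\al\in(0,1]$, i.e. $\chi_A\in F_{USCG}(X)$. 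Finally, if $\chi_A\in F_{USCG}(X)$, then $[\chi_A]_\al=A\in K(X)\cup\{\emptyset\}$ for $\al\in(0,1]$, and for nonempty $A$ this is precisely $A\in K(X)$.

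The argument is entirely routine, so I expect no serious obstacle; the only point requiring attention is the bookkeeping distinction between the positive cuts (all equal to $A$) and the zero cut (equal to the closure $\overline{A}$), together with the convention that $C(X)$ and $K(X)$ contain only nonempty sets, which is why I would restrict to $A\neq\emptyset$ when converting closedness or compactness of $A$ into membership in $C(X)$ or $K(X)$.
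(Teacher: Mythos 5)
Your proof is correct and is essentially the paper's approach: the paper offers no argument at all, stating only that the proposition ``follows immediately from the basic definitions,'' and your computation of the cuts $[\chi_A]_\al=A$ for $\al\in(0,1]$ and $[\chi_A]_0=\overline{A}$ is precisely the definitional check being alluded to. Your closing caveat is also well taken: as literally stated the equivalence fails for $A=\emptyset$ (since $\chi_{\emptyset}\in F_{USC}(X)$ while $\emptyset\notin C(X)$), so the implicit restriction to nonempty $A$ that you impose is genuinely needed.
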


Let $\mathcal{D}\subseteq P(X)$. We use the symbol $\mathcal{D}_{F(X)}$ to denote the set $\{C_{F(X)}: C\in \mathcal{D} \}$.

 Let $A, B\in C(X)$. Then ${\rm end}\, \chi_A = (A\times[0,1]) \cup (X\times\{0\})$ and
  \begin{gather}
H^*({\rm end}\, \chi_A,  {\rm end}\, \chi_B)= \min\{H^*(A,B), \ 1 \},\label{efm}
\\
   H_{\rm end} (\chi_A, \chi_B) = \min\{H(A,B), \ 1 \}.\label{efn}
  \end{gather}
Let $(x,\al)\in X\times[0,1]$. Then
$\overline{d}((x,\al),  {\rm end}\, \chi_B) =  \overline{d}((x,\al),  B\times[0,1]) \wedge\overline{d}((x,\al),  X\times\{0\})=  \overline{d}((x,\al),  B\times\{\al\}) \wedge\overline{d}((x,\al),  X\times\{0\})
= d(x, B) \wedge \al$.
Clearly if $(x,\al)\in X\times\{0\}$ then
$\overline{d}((x,\al),  {\rm end}\, \chi_B)=0$.
Thus
$H^*({\rm end}\, \chi_A,  {\rm end}\, \chi_B)= \sup\{\overline{d}((x,\al),  {\rm end}\, \chi_B): (x,\al) \in {\rm end}\, \chi_A  \}
=\sup\{d(x, B) \wedge \al: (x,\al) \in A\times[0,1] \}
=\sup\{d(x, B) \wedge 1 : x  \in A  \} =  \min\{H^*(A,B), \ 1 \}.$
So \eqref{efm} is true.

By \eqref{efm},
$H_{\rm end} (\chi_A, \chi_B) = H^*({\rm end}\, \chi_A,  {\rm end}\, \chi_B) \vee H^*({\rm end}\, \chi_B,  {\rm end}\, \chi_A) =  \min\{H^*(A,B), \ 1 \} \vee \min\{H^*(B,A), \ 1 \} = \min\{H^*(A,B)\vee H^*(B,A), \ 1 \} =   \min\{H(A,B), \ 1 \}$. So \eqref{efn} is true.

Here we mention that
 the above two paragraphs including \eqref{efm} and \eqref{efn} remain true if $\overline{d}$ is replaced by $d'$,
$H^*$ on $C(X\times [0,1])\cup\{\emptyset\}$ is replaced
 by
$H'^*$ on $C(X\times [0,1])\cup\{\emptyset\}$, and
$H_{\rm end}$ is replaced
 by $H'_{\rm end}$.

\begin{pp}\label{mrn}
 Let $\mathcal{D}$ be a subset of $K(X)$.
 \\
 (\romannumeral1) $\mathcal{D}$ is totally bounded in $(K(X), H)$
if and only if
$\mathcal{D}_{F(X)}$ is totally bounded in $(F_{USCG}(X), H_{\rm end})$;
\\
(\romannumeral2)
$\mathcal{D}$ is compact in $(K(X), H)$
if and only if
$\mathcal{D}_{F(X)}$ is compact in $(F_{USCG}(X), H_{\rm end}).$
\end{pp}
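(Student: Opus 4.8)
The plan is to reduce everything to the elementary metric identity \eqref{efn}, which says that for $A,B\in K(X)$ we have $H_{\rm end}(\chi_A,\chi_B)=\min\{H(A,B),1\}$, together with the fact (Proposition \ref{brn}(\romannumeral2)) that $\chi_C\in F_{USCG}(X)$ for every $C\in K(X)$, so that $\mathcal{D}_{F(X)}\subseteq F_{USCG}(X)$. First I would record that the assignment $C\mapsto \chi_C$ is a bijection from $\mathcal{D}$ onto $\mathcal{D}_{F(X)}$ (since $\chi_A=\chi_B$ forces $A=B$) and that, by \eqref{efn}, it behaves like an isometry on the scale below $1$: whenever one of $H(A,B)$ and $H_{\rm end}(\chi_A,\chi_B)$ is smaller than $1$, the two distances coincide. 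This single observation drives both parts.

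For (\romannumeral1), I would transfer finite nets through this bijection. Given total boundedness of $\mathcal{D}$ in $(K(X),H)$ and $\var\in(0,1)$, a finite $\var$-net $\{C_1,\dots,C_n\}$ of $\mathcal{D}$ yields, via $H_{\rm end}(\chi_C,\chi_{C_i})=\min\{H(C,C_i),1\}\leq H(C,C_i)<\var$, a finite $\var$-net $\{\chi_{C_1},\dots,\chi_{C_n}\}$ of $\mathcal{D}_{F(X)}$; the radii $\var\geq1$ are covered by reusing an $\var=1/2$ net. Conversely, a finite $\var$-net of $\mathcal{D}_{F(X)}$ with $\var\in(0,1)$ provides sets $C_i$ with $\min\{H(C,C_i),1\}<\var<1$, hence $H(C,C_i)<\var$, so the $C_i$ form an $\var$-net of $\mathcal{D}$. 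Thus total boundedness passes back and forth.

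For (\romannumeral2), I would use the sequential description of compactness in a metric space, which is intrinsic to a subset and does not refer to the ambient space. If $\mathcal{D}$ is compact in $(K(X),H)$, any sequence in $\mathcal{D}_{F(X)}$ has the form $\{\chi_{C_n}\}$ with $C_n\in\mathcal{D}$; a subsequence $C_{n_k}\to C\in\mathcal{D}$ gives $H_{\rm end}(\chi_{C_{n_k}},\chi_C)=\min\{H(C_{n_k},C),1\}\to0$ with $\chi_C\in\mathcal{D}_{F(X)}$, so $\mathcal{D}_{F(X)}$ is compact. Conversely, if $\mathcal{D}_{F(X)}$ is compact, a sequence $\{C_n\}$ in $\mathcal{D}$ yields a subsequence $\chi_{C_{n_k}}\to u\in\mathcal{D}_{F(X)}$; writing $u=\chi_C$ with $C\in\mathcal{D}$, the vanishing of $\min\{H(C_{n_k},C),1\}$ forces $H(C_{n_k},C)\to0$, so $C_{n_k}\to C\in\mathcal{D}$ and $\mathcal{D}$ is compact. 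Hence compactness transfers in both directions.

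I do not expect a serious obstacle; the only point that needs care is the truncation $\min\{\cdot,1\}$ in \eqref{efn}. The remedy is to restrict attention to radii $\var<1$ (and to limits, where the relevant distances are eventually $<1$), on which the truncation is inactive and the correspondence $C\mapsto\chi_C$ is a genuine isometry, the behaviour at scale $\geq1$ being irrelevant to both total boundedness and convergence. I would also emphasise that in part (\romannumeral2) the membership of the limit in $\mathcal{D}_{F(X)}$ is supplied for free by the compactness hypothesis, so no independent argument is needed to show that an $H_{\rm end}$-limit of indicator functions is again an indicator function. Keeping the proof tied to \eqref{efn} rather than to Theorems \ref{tbe} and \ref{come} is also what makes it legitimate to afterwards present those two theorems as special cases.
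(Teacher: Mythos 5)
Your proposal is correct and follows essentially the same route as the paper: both rest entirely on the identity \eqref{efn}, which makes $C\mapsto\chi_C$ an isometry below scale $1$, combined with the facts that total boundedness and compactness are intrinsic to the subset and unaffected by the truncation $\min\{\cdot,1\}$. The paper's own proof is just a terser version of yours (it passes through the intermediate subspace $(K(X)_{F(X)}, H_{\rm end})$ rather than transferring nets and sequences explicitly), so no further comparison is needed.
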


\begin{proof}

   From \eqref{efn}, it follows immediately that (\romannumeral1) is true.

    By \eqref{efn}, we have
     that
     $\mathcal{D}$ is compact in $(K(X), H)$
     if and only if $\mathcal{D}_{F(X)}$ is compact in $(K(X)_{F(X)}, H_{\rm end}).$
     Clearly $\mathcal{D}_{F(X)}$ is compact in $(K(X)_{F(X)}, H_{\rm end})$
     if and only if
     $\mathcal{D}_{F(X)}$ is compact in $(F_{USCG}(X), H_{\rm end}).$
   So (\romannumeral2)
is true.

\end{proof}

\begin{pp}\label{spcmgn}

Let $\{A_n\}$ be a sequence of sets in $C(X)$. If $\{\chi_{A_n}\}$ converges to a fuzzy set $u$ in $F_{USC} (X)$
according to the $H_{\rm end}$ metric,
then
there is an $A\in C(X)$ such that $u=\chi_A$ and $H(A_n, A)\to 0$ as $n\to\infty$.

\end{pp}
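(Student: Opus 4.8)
The plan is to first identify the limit $u$ as a characteristic function and then read off $H(A_n,A)\to 0$ from the explicit formula \eqref{efn}. Since $H_{\rm end}(\chi_{A_n},u)\to 0$ means $H({\rm end}\,\chi_{A_n},{\rm end}\,u)\to 0$ in $(C(X\times[0,1]),H)$, Theorem \ref{hkg} yields the Kuratowski convergence ${\rm end}\,u=\lim^{(K)}_{n\to\infty}{\rm end}\,\chi_{A_n}$; equivalently, by Remark \ref{hmr}, $\lim^{(\Gamma)}_{n\to\infty}\chi_{A_n}=u$, so in particular $\liminf_{n\to\infty}{\rm end}\,\chi_{A_n}={\rm end}\,u=\limsup_{n\to\infty}{\rm end}\,\chi_{A_n}$. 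The structural fact I would exploit is that ${\rm end}\,\chi_{A_n}=(A_n\times[0,1])\cup(X\times\{0\})$ is a union of full vertical segments over $A_n$: whenever $(x_n,\al_n)\in{\rm end}\,\chi_{A_n}$ with $\al_n>0$, then $x_n\in A_n$ and hence $(x_n,\be)\in{\rm end}\,\chi_{A_n}$ for every $\be\in[0,1]$.

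Next I would show $u$ is $\{0,1\}$-valued. Take any $x$ with $u(x)>0$ and set $\al=u(x)$, so $(x,\al)\in{\rm end}\,u=\liminf_{n\to\infty}{\rm end}\,\chi_{A_n}$. By the definition of $\liminf$ there are $(x_n,\al_n)\in{\rm end}\,\chi_{A_n}$ with $(x_n,\al_n)\to(x,\al)$. Since $\al>0$, we have $\al_n>0$ for all large $n$, whence $x_n\in A_n$ and $(x_n,1)\in{\rm end}\,\chi_{A_n}$; as $(x_n,1)\to(x,1)$ (the finitely many early terms being irrelevant), the inclusion $\liminf_{n\to\infty}{\rm end}\,\chi_{A_n}\subseteq{\rm end}\,u$ forces $(x,1)\in{\rm end}\,u$, i.e. $u(x)=1$. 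Thus $u=\chi_A$ with $A:=\{u>0\}=[u]_1$, which is closed because $u\in F_{USC}(X)$ (Remark \ref{bcp}).

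Then I would rule out $A=\emptyset$. If $A=\emptyset$, then $u=\emptyset_{F(X)}$ and ${\rm end}\,u=X\times\{0\}$; but each $A_n\in C(X)$ is nonempty, so choosing $a\in A_n$ gives $(a,1)\in{\rm end}\,\chi_{A_n}$ with $\overline{d}((a,1),X\times\{0\})=1$, hence $H_{\rm end}(\chi_{A_n},u)\geq H^*({\rm end}\,\chi_{A_n},X\times\{0\})\geq 1$ for every $n$, contradicting $H_{\rm end}(\chi_{A_n},u)\to 0$. Therefore $A\in C(X)$ and $u=\chi_A$ (consistent, by Proposition \ref{brn}, with $u\in F_{USC}(X)$). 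Finally, applying \eqref{efn} with both arguments in $C(X)$ gives $\min\{H(A_n,A),1\}=H_{\rm end}(\chi_{A_n},\chi_A)=H_{\rm end}(\chi_{A_n},u)\to 0$, and since this minimum eventually drops below $1$ we conclude $H(A_n,A)\to 0$, as required.

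I expect the only delicate point to be the identification step showing $u$ is two-valued: it hinges on passing correctly between $\liminf$-membership and the vertical-segment structure of $\chi$-endographs, and on using the equality ${\rm end}\,u=\liminf_{n\to\infty}{\rm end}\,\chi_{A_n}$ in both directions (to manufacture approximating points and to recognize $(x,1)$ as a limit point lying in ${\rm end}\,u$). The remaining steps—nonemptiness of $A$ and the final convergence—are short computations with the already-established formulas \eqref{efm}–\eqref{efn}.
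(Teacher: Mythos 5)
Your proof is correct and follows essentially the same route as the paper's: both extract Kuratowski convergence of the endographs via Theorem \ref{hkg}, exploit the vertical-segment structure of ${\rm end}\,\chi_{A_n}$ to show $u$ is $\{0,1\}$-valued, and finish with the formula \eqref{efn}. The only (harmless) deviation is that you rule out $A=\emptyset$ by a direct distance estimate, whereas the paper cites Proposition \ref{sem} to get $S_u=\lim_n S_{\chi_{A_n}}=1$.
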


\begin{proof}

We will show in turn, the following properties (\romannumeral1), (\romannumeral2)  and (\romannumeral3).
\\
(\romannumeral1) Let $x\in X$ and $\al, \beta \in (0,1]$. Then $(x,\al) \in {\rm end}\, u$ if and only if $(x,\beta) \in {\rm end}\, u$.
\\
(\romannumeral2) $[u]_\al = [u]_\beta$
for all $\al,\beta\in [0,1]$.
\\
(\romannumeral3)
There is an $A$ in $C(X)$ such that $u=\chi_A$ and $H(A_n, A)\to 0$ as $n\to\infty$.

To show (\romannumeral1), we only need to show
that if
$(x, \alpha)\in {\rm end}\, u$ then
$(x, \beta)\in {\rm end}\, u$ since $\al$ and $\beta$ can be interchanged.

Assume that $(x,\al) \in {\rm end}\, u$.
Since $H_{\rm end} (\chi_{A_n}, u) \to 0$,
by
Theorem \ref{hkg} and Remark \ref{hmr},
 $\lim_{n\to\infty}^{(\Gamma)} \chi_{A_n} = u$. Then there is a
sequence
$\{(x_n, \al_n)\}$ such that $(x_n, \al_n) \in {\rm end}\, \chi_{A_n}$
for $n=1,2,\ldots$, and
$\lim_{n\to\infty} \overline{d} ( (x_n,\al_n), (x,\al)   )  = 0$.
As $\al>0$, it follows that there exists an
$N$ such that
$\al_n > 0$ for all $n\geq N$.
This yields
that $(x_n, \al_n) \in {\rm send}\, \chi_{A_n} = A_n\times [0,1]$ for all $n\geq N$.
Hence
$(x_n, \beta) \in {\rm send}\, \chi_{A_n}$ for all $n\geq N$.
Observe that $\lim_{n\to\infty} \overline{d} ((x_n,\beta), (x, \beta)   )  = 0$, i.e.
$\{(x_n,\beta): n\geq N\}$ converges to $(x,\beta)$ in $(X\times[0,1], \overline{d})$.
Thus
we have
$(x, \beta)\in \liminf_{n\to\infty} {\rm end}\, \chi_{A_n} = {\rm end}\, u$.
So (\romannumeral1) is true.

From (\romannumeral1), we have that
$[u]_\al = [u]_\beta$
for all $\al,\beta\in (0,1]$.
Then
$[u]_0 = \overline{\cup_{\al>0}[u]_{\al}} = [u]_1$.
So (\romannumeral2) is true.

Set $A=[u]_1$. Then $A\in C(X)\cup\{\emptyset\}$.
By Proposition \ref{sem}, $u\in F^{'1}_{USC}(X)$.
From this and (\romannumeral2), $A=[u]_{0.9} \not= \emptyset$, and hence
$A\in C(X)$. By (\rmn2), $u=\chi_A$.

Since by \eqref{efn},
$$H_{\rm end} (\chi_{A_n}, u) = H_{\rm end} (\chi_{A_n}, \chi_A) = \min\{   H({A_n}, A),\ 1 \}\to 0    \mbox{ as }    n\to \infty,$$
we obtain that
$H({A_n}, A) \to 0$ as $n\to \infty$.
So (\romannumeral3) is true.
This completes the proof.

\end{proof}

\begin{pp}\label{spcm}

Let $\{x_n\}$ be a sequence in $X$. If $\{\widehat{x_n}\}$ converges to a fuzzy set $u$ in $F_{USC} (X)$
according to the $H_{\rm end}$ metric,
then
there is an $x\in X$ such that $u=\widehat{x}$ and $d(x_n, x)\to 0$ as $n\to\infty$.

\end{pp}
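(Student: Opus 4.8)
The plan is to recognize this statement as a direct specialization of Proposition \ref{spcmgn}. Since $\widehat{x_n} = \chi_{\{x_n\}}$ and every singleton $\{x_n\}$ is a nonempty closed subset of $X$, we have $\{x_n\} \in C(X)$. Thus the sequence $\{\widehat{x_n}\}$ is exactly a sequence $\{\chi_{A_n}\}$ with $A_n = \{x_n\} \in C(X)$ converging to $u$ in $(F_{USC}(X), H_{\rm end})$. Applying Proposition \ref{spcmgn} immediately yields some $A \in C(X)$ with $u = \chi_A$ and $H(\{x_n\}, A) \to 0$ as $n\to\infty$.

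The only remaining task is to upgrade $A$ from a general nonempty closed set to a singleton. First I would record that, by the definition of the extended Hausdorff distance, $H(\{x_n\}, A) \geq H^*(A, \{x_n\}) = \sup_{a\in A} d(a, x_n)$, so $\sup_{a\in A} d(a, x_n) \to 0$. I would then argue by contradiction: suppose $A$ contains two distinct points $a$ and $b$ with $\delta := d(a,b) > 0$. For every $n$ the triangle inequality gives $d(a,x_n) + d(b,x_n) \geq d(a,b) = \delta$, whence $\max\{d(a,x_n), d(b,x_n)\} \geq \delta/2$, and therefore $\sup_{c\in A} d(c, x_n) \geq \delta/2 > 0$ for all $n$. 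This contradicts $\sup_{a\in A} d(a, x_n) \to 0$, so $A$ must be a singleton, say $A = \{x\}$.

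With $A = \{x\}$ I would conclude $u = \chi_{\{x\}} = \widehat{x}$, and finally observe that $d(x_n, x) = H(\{x_n\}, \{x\}) = H(\{x_n\}, A) \to 0$, which completes the proof. The main obstacle, such as it is, lies in the singleton argument of the second paragraph; everything else is a bookkeeping reduction to Proposition \ref{spcmgn}. The singleton step is a short, self-contained metric estimate, so I do not anticipate any genuine difficulty, only the need to invoke the correct direction of the Hausdorff semi-distance (namely the part that measures how far points of $A$ sit from the shrinking singletons $\{x_n\}$).
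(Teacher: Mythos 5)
Your proposal is correct and follows essentially the same route as the paper: both reduce the statement to Proposition \ref{spcmgn} by viewing $\widehat{x_n}$ as $\chi_{\{x_n\}}$ and then show that the resulting closed set $A$ must be a singleton. The only difference is in that last step, where the paper's proof body deduces it from $\lim^{(K)}_{n\to\infty}\{x_n\}=A$, whereas your triangle-inequality estimate is precisely the alternative argument the paper itself records immediately after the proof, so nothing further is needed.
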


\begin{proof} Note that $\widehat{z} = \chi_{\{z\}}$ for each $z\in X$.
 Thus by Proposition \ref{spcmgn}, there is an
$A\in C(X)$ such that $u=\chi_A$ and $H(\{x_n\}, A)\to 0$ as $n\to\infty$.
  Since $\lim^{(K)}_{n\to\infty} \{x_n\} = A$,
  it follows that $A$ is a singleton. Set $A=\{x\}$.
  Then $u=\widehat{x}$ and $d(x_n, x) = H(\{x_n\}, \{x\}) \to 0$ as $n\to\infty$. This completes the proof.

 \end{proof}

   Proposition \ref{spcm} is Proposition 8.15 in our paper arXiv:submit/4644498.
  It can be seen that we can also use the idea in the proof of Proposition \ref{spcmgn}
   to show Proposition \ref{spcm} directly.

 It can be seen that using the idea in the proof of Proposition 8.15 in  arXiv:submit/4644498, we can show that $A$ in the proof of Proposition \ref{spcm} is a singleton as follows.

    Assume that $A$ has at least two distinct elements.
    Pick $p,q$ in $A$ with $p \not= q$.
Let $z\in X$. Since $d(p,z) + d(q,z) \geq d(p,q)$, it follows that
$\max\{d(p,z), d(q,z)\} \geq \frac{1}{2} d(p,q)$.
Thus $H(A, \{x_n\}) = H^*(A, \{x_n\}) \geq \frac{1}{2}d(p,q)$,
which contradicts
 $H(A, \{x_n\})\to 0$ as $n\to \infty$.

\begin{pp}\label{crpn}
Let $\mathcal{D}$ be a subset of $K(X)$ and $\mathcal{B}$ a subset of $C(X)$.
\\
 (\romannumeral1) $C(X)_{F(X)}$ is closed in $(F_{USC}(X), H_{\rm end}).$
\\
 (\romannumeral2) $K(X)_{F(X)}$ is closed in $(F_{USCG}(X), H_{\rm end}).$
  \\
 (\romannumeral3) $\mathcal{D}$ is closed in $(K(X), H)$
 if and only if $\mathcal{D}_{F(X)}$ is closed in $(F_{USCG}(X), H_{\rm end}).$
 \\
 (\romannumeral4)
$\mathcal{D}$ is relatively compact in $(K(X), H)$
if and only if
$\mathcal{D}_{F(X)}$ is relatively compact in $(F_{USCG}(X), H_{\rm end}).$
\\
 (\romannumeral5) $\mathcal{B}$ is closed in $(C(X), H)$
 if and only if $\mathcal{B}_{F(X)}$ is closed in $(F_{USC}(X), H_{\rm end}).$
 \\
 (\romannumeral6)
$\mathcal{B}$ is relatively compact in $(C(X), H)$
if and only if
$\mathcal{B}_{F(X)}$ is relatively compact in $(F_{USC}(X), H_{\rm end}).$
\end{pp}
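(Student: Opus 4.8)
The engine behind all six parts is a transfer principle between $(C(X),H)$ (resp.\ $(K(X),H)$) and its image under $\Phi\colon A\mapsto \chi_A$. The plan is to first record three facts and then read each claim off them. By \eqref{efn}, for $A,B\in C(X)$ we have $H_{\rm end}(\chi_A,\chi_B)=\min\{H(A,B),1\}$, so $H(A_n,A)\to0$ if and only if $H_{\rm end}(\chi_{A_n},\chi_A)\to0$ (the truncation at $1$ is harmless for convergence to $0$); moreover $\Phi$ is injective, since $\chi_A=\chi_B$ forces $A=B$. By Proposition~\ref{brn}, $\Phi$ carries $C(X)$ bijectively onto $\{\chi_A:A\in C(X)\}\subseteq F_{USC}(X)$ and $K(X)$ bijectively onto $\{\chi_A:A\in K(X)\}\subseteq F_{USCG}(X)$; in particular $\chi_A\in F_{USCG}(X)$ implies $A\in K(X)$. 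Finally, by Proposition~\ref{spcmgn}, any $H_{\rm end}$-limit $u$ of a sequence $\{\chi_{A_n}\}$ with $A_n\in C(X)$ is again of the form $u=\chi_A$ with $A\in C(X)$ and $H(A_n,A)\to0$; this is the one nontrivial ingredient, and it is already available.

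For (\romannumeral1) I would take a sequence $\{\chi_{A_n}\}$ in $C(X)_{F(X)}$ converging to some $u$ in $(F_{USC}(X),H_{\rm end})$; Proposition~\ref{spcmgn} gives $u=\chi_A$ with $A\in C(X)$, hence $u\in C(X)_{F(X)}$, so the set is sequentially closed and therefore closed. Part (\romannumeral2) is identical except that the limit lies in $F_{USCG}(X)$: again $u=\chi_A$ with $A\in C(X)$, and then Proposition~\ref{brn}(\romannumeral2) upgrades $A$ to a member of $K(X)$, so $u\in K(X)_{F(X)}$.

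The equivalences I would argue by sequences in both directions. For the closedness statements (\romannumeral3) and (\romannumeral5): if $\mathcal D$ (resp.\ $\mathcal B$) is closed, take $\{\chi_{A_n}\}$ in $\mathcal D_{F(X)}$ (resp.\ $\mathcal B_{F(X)}$) with $\chi_{A_n}\to u$; Proposition~\ref{spcmgn} yields $u=\chi_A$ with $H(A_n,A)\to0$ (and $A\in K(X)$ in case (\romannumeral3), by Proposition~\ref{brn}(\romannumeral2)), whence $A\in\mathcal D$ (resp.\ $\mathcal B$) by closedness, so $u$ lies in the image. Conversely, if the image is closed, take $A_n$ in $\mathcal D$ (resp.\ $\mathcal B$) with $H(A_n,A)\to0$ and $A\in K(X)$ (resp.\ $C(X)$); then $\chi_{A_n}\to\chi_A$ by \eqref{efn} with $\chi_A$ in the appropriate space, so $\chi_A$ is in the closed image and injectivity of $\Phi$ gives $A\in\mathcal D$ (resp.\ $\mathcal B$). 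The relative-compactness statements (\romannumeral4) and (\romannumeral6) use the subsequence criterion of Section~\ref{cmg}: if $\mathcal D$ (resp.\ $\mathcal B$) is relatively compact, a sequence $\{\chi_{A_n}\}$ in the image is pushed down to $\{A_n\}$, a subsequence $A_{n_k}$ converges in $K(X)$ (resp.\ $C(X)$), and $\Phi$ transports this to a convergent subsequence of $\{\chi_{A_n}\}$; conversely a sequence $\{A_n\}$ is lifted to $\{\chi_{A_n}\}$, a subsequence converges to some $u$ in $F_{USCG}(X)$ (resp.\ $F_{USC}(X)$), and Proposition~\ref{spcmgn} forces $u=\chi_A$ with $A_{n_k}\to A$. (Alternatively, (\romannumeral4) follows from the closure identity $\overline{\mathcal D_{F(X)}}=(\overline{\mathcal D})_{F(X)}$ together with Proposition~\ref{mrn}(\romannumeral2) and part (\romannumeral2).)

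The proofs are essentially bookkeeping once the transfer principle is in place; the only real obstacle is that an $H_{\rm end}$-limit of indicator functions is again an indicator function of a \emph{closed} (and, in the $F_{USCG}$ setting, \emph{compact}) set, which is exactly Proposition~\ref{spcmgn} together with Proposition~\ref{brn}. Two minor points remain: $H$ on $C(X)$ is only an extended metric, but this does not affect the sequential arguments (and relative compactness is taken in the ``every sequence has a subsequence convergent in the ambient space'' sense of Section~\ref{cmg}), while $H_{\rm end}$ is a genuine bounded metric; and the truncation $\min\{\cdot,1\}$ in \eqref{efn} never obstructs convergence to $0$, so $\Phi$ and its inverse are continuous wherever they are used.
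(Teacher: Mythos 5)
Your proposal is correct and rests on exactly the same three ingredients as the paper's own proof: Proposition \ref{spcmgn} for identifying $H_{\rm end}$-limits of indicator functions, Proposition \ref{brn} for upgrading closed to compact inside $F_{USCG}(X)$, and the identity \eqref{efn} for transferring convergence between $H$ and $H_{\rm end}$. The only difference is organizational — you run direct sequential arguments where the paper factors (\romannumeral3)–(\romannumeral6) through the intermediate subspaces $(K(X)_{F(X)}, H_{\rm end})$ and $(C(X)_{F(X)}, H_{\rm end})$ — so this is essentially the same approach.
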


\begin{proof}

From Proposition \ref{spcmgn} we have that (\romannumeral1) is true.

By (\romannumeral1),
the closure of $K(X)_{F(X)}$ in $(F_{USCG}(X), H_{\rm end})$ is contained in $F_{USCG}(X)\cap C(X)_{F(X)}$. From Proposition \ref{brn} (\romannumeral2),
 $F_{USCG}(X)\cap C(X)_{F(X)}=K(X)_{F(X)}$.
Thus the closure of $K(X)_{F(X)}$ in $(F_{USCG}(X), H_{\rm end})$
is $K(X)_{F(X)}$.
So (\romannumeral2) is true.

Suppose the following conditions:
(a-1) $\mathcal{D}$ is closed in $(K(X), H)$,
(a-2) $\mathcal{D}_{F(X)}$ is closed in $(K(X)_{F(X)}, H_{\rm end})$, and
(a-3) $\mathcal{D}_{F(X)}$ is closed in $(F_{USCG}(X), H_{\rm end}).$

  By \eqref{efn}, (a-1)$\Leftrightarrow$(a-2).
 From (\romannumeral2), (a-2)$\Leftrightarrow$(a-3).
Thus (a-1)$\Leftrightarrow$(a-3). So (\romannumeral3) is true.

Suppose the following conditions:
(b-1) $\mathcal{D}$ is relatively compact in $(K(X), H)$,
(b-2) $\mathcal{D}_{F(X)}$ is relatively compact in $(K(X)_{F(X)}, H_{\rm end})$, and
(b-3) $\mathcal{D}_{F(X)}$ is relatively compact in $(F_{USCG}(X), H_{\rm end}).$

  By \eqref{efn}, (b-1)$\Leftrightarrow$(b-2).
 From (\romannumeral2), (b-2)$\Leftrightarrow$(b-3).
Thus (b-1)$\Leftrightarrow$(b-3). So (\romannumeral4) is true.

Using \eqref{efn} and (\romannumeral1),
 (\romannumeral5) and (\romannumeral6) can be proved
in a similar manner to
 (\romannumeral3) and (\romannumeral4), respectively.

\end{proof}

Each subset $\mathcal{D}$ of $(K(X), H)$ corresponds a subset
$\mathcal{D}_{F(X)}$ of $(F_{USCG}(X), H_{\rm end}).$
Using
 Theorems \ref{rcfegnum},
\ref{tbfegnu},
and \ref{cfegum},
we obtain
the
characterizations of
relative compactness,
total boundedness, and compactness for $\mathcal{D}_{F(X)}$ in
$(F_{USCG} (X), H_{\rm end})$ as follows.

\begin{tl}
 \label{trcfegnum}
   Let $\mathcal{D}$ be a subset of $K(X)$. Then $\mathcal{D}_{F(X)}$ is relatively compact in $(F_{USCG} (X), H_{\rm end})$
if and only if
$\mathbf{ D} =   \bigcup \{C:  C \in  \mathcal{D} \}$
is relatively compact in $(X, d)$.

\end{tl}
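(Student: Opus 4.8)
The plan is to derive this directly from Theorem \ref{rcfegnum} by reducing the set-valued union $\mathbf{D}$ to a union of $\al$-cuts. First I would set $U := \mathcal{D}_{F(X)}$, which is a subset of $F_{USCG}(X)$ by Proposition \ref{brn}(\romannumeral2) (since each $C\in \mathcal{D}\subseteq K(X)$ makes $\chi_C$ lie in $F_{USCG}(X)$), so that Theorem \ref{rcfegnum} is applicable to $U$.

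The key observation is that the $\al$-cuts of a characteristic function collapse to a single set. For $C\in K(X)\subseteq C(X)$ and $\al\in (0,1]$, the function $\chi_C$ takes only the values $0$ and $1$, so $[\chi_C]_\al = \{x\in X : \chi_C(x)\geq \al\} = C$. Consequently, for every $\al\in (0,1]$,
$$U(\al) = \bigcup_{C\in \mathcal{D}} [\chi_C]_\al = \bigcup_{C\in \mathcal{D}} C = \mathbf{D},$$
so the family $\{U(\al):\al\in (0,1]\}$ reduces to the single set $\mathbf{D}$.

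Applying Theorem \ref{rcfegnum} then finishes the argument: $U=\mathcal{D}_{F(X)}$ is relatively compact in $(F_{USCG}(X), H_{\rm end})$ if and only if $U(\al)$ is relatively compact in $(X,d)$ for each $\al\in (0,1]$, which by the displayed identity is exactly the single requirement that $\mathbf{D}$ be relatively compact in $(X,d)$.

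There is essentially no obstacle here; all the content sits inside Theorem \ref{rcfegnum}, and the corollary is merely its specialization to characteristic functions via the trivial $\al$-cut computation. As a consistency check, the same conclusion is obtained along an independent route by chaining Proposition \ref{crpn}(\romannumeral4), which equates relative compactness of $\mathcal{D}$ in $(K(X),H)$ with that of $\mathcal{D}_{F(X)}$ in $(F_{USCG}(X), H_{\rm end})$, with Theorem \ref{rce}, which equates relative compactness of $\mathcal{D}$ in $(K(X),H)$ with that of $\mathbf{D}$ in $(X,d)$; the agreement of the two derivations is reassuring, and I would likely present the first, more self-contained route.
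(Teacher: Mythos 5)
Your proposal is correct and follows exactly the route the paper intends: the paper states Corollary \ref{trcfegnum} as an immediate consequence of Theorem \ref{rcfegnum} applied to $U=\mathcal{D}_{F(X)}$, where $[\chi_C]_\al=C$ for all $\al\in(0,1]$ gives $U(\al)=\mathbf{D}$. Your alternative consistency check via Proposition \ref{crpn}(\romannumeral4) and Theorem \ref{rce} is also the same observation the paper makes when it notes that the corollary is an equivalent form of Theorem \ref{rce}.
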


\begin{tl} \label{ctybfegnu}
  Let $\mathcal{D}$ be a subset of $K(X)$. Then $\mathcal{D}_{F(X)}$ is totally bounded in $(F_{USCG} (X), H_{\rm end})$
if and only if
$\mathbf{ D} =   \bigcup \{C:  C \in  \mathcal{D} \}$
is totally bounded in $(X,d)$.
\end{tl}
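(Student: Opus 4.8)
The plan is to derive this corollary directly from the $\al$-cut characterization of total boundedness established in Theorem~\ref{tbfegnu}, exactly in parallel with the preceding Corollary~\ref{trcfegnum}. The essential observation is that passing from a compact set to its characteristic fuzzy set leaves the positive-level cuts unchanged: for every $C\in K(X)$ and every $\al\in (0,1]$ we have $[\chi_C]_\al=\{x\in X:\ \chi_C(x)\geq\al\}=C$, since $\chi_C$ takes only the values $0$ and $1$ and $\al>0$. Consequently, for each $\al\in (0,1]$,
\[
\mathcal{D}_{F(X)}(\al)=\bigcup_{C\in\mathcal{D}}[\chi_C]_\al=\bigcup_{C\in\mathcal{D}}C=\mathbf{D}.
\]

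First I would record this identity, noting in particular that $\mathcal{D}_{F(X)}(\al)$ is the \emph{same} set $\mathbf{D}$ for every $\al\in (0,1]$, so that the family of conditions appearing in Theorem~\ref{tbfegnu} collapses to a single condition. Then I would invoke Theorem~\ref{tbfegnu}, which states that $\mathcal{D}_{F(X)}$ is totally bounded in $(F_{USCG}(X),H_{\rm end})$ if and only if $\mathcal{D}_{F(X)}(\al)$ is totally bounded in $(X,d)$ for each $\al\in (0,1]$. Substituting the identity above, the right-hand condition becomes simply that $\mathbf{D}$ is totally bounded in $(X,d)$, which is exactly the asserted equivalence.

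An equivalent route, which I would mention as an alternative, bypasses the cut characterization altogether: by Proposition~\ref{mrn}(\romannumeral1), $\mathcal{D}_{F(X)}$ is totally bounded in $(F_{USCG}(X),H_{\rm end})$ if and only if $\mathcal{D}$ is totally bounded in $(K(X),H)$, and by Theorem~\ref{tbe} the latter holds if and only if $\mathbf{D}$ is totally bounded in $(X,d)$; chaining these two equivalences gives the result. Since every ingredient is already available, there is no genuine obstacle here; the only point requiring care is the verification of the level-set identity $[\chi_C]_\al=C$ for $\al\in (0,1]$ together with the remark that it holds uniformly in $\al$, which is precisely what allows a single application of Theorem~\ref{tbfegnu} (or of Proposition~\ref{mrn} combined with Theorem~\ref{tbe}) to close the argument.
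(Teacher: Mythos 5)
Your proposal is correct and follows essentially the paper's own route: the paper derives this corollary by applying Theorem \ref{tbfegnu} to $U=\mathcal{D}_{F(X)}$, using exactly the identity $\mathcal{D}_{F(X)}(\al)=\mathbf{D}$ for all $\al\in(0,1]$ that you verify. Your alternative chain through Proposition \ref{mrn}(\romannumeral1) and Theorem \ref{tbe} is also the equivalence the paper itself records immediately after the corollaries.
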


\begin{tl}\label{crfegum}
  Let $\mathcal{D}$ be a subset of $K(X)$. Then the following are equivalent:
\\
 (\romannumeral1)
 $\mathcal{D}_{F(X)}$ is compact in $(F_{USCG} (X), H_{\rm end})$;
\\
 (\romannumeral2) $\mathbf{ D} =   \bigcup \{C:  C \in  \mathcal{D} \} $
is relatively compact in $(X, d)$ and $\mathcal{D}_{F(X)}$ is closed in $(F_{USCG} (X), H_{\rm end})$;
\\
 (\romannumeral3) $\mathbf{ D} =   \bigcup \{C:  C \in  \mathcal{D} \} $
is compact in $(X, d)$ and $\mathcal{D}_{F(X)}$ is closed in $(F_{USCG} (X), H_{\rm end})$.

\end{tl}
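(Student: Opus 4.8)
The plan is to deduce this corollary directly from Theorem \ref{cfegum} by applying it to the set $U=\mathcal{D}_{F(X)}$. First I would check that Theorem \ref{cfegum} is applicable, i.e. that $\mathcal{D}_{F(X)}\subseteq F_{USCG}(X)$: by Proposition \ref{brn}(\romannumeral2), each $C\in\mathcal{D}\subseteq K(X)$ yields $\chi_C\in F_{USCG}(X)$, so indeed $\mathcal{D}_{F(X)}\subseteq F_{USCG}(X)$ and we may take $U=\mathcal{D}_{F(X)}$ in Theorem \ref{cfegum}.

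The key observation is the identification of the sets $U(\al)$ with $\mathbf{D}$. For $C\in K(X)$ and $\al\in(0,1]$ one has $[\chi_C]_\al=\{x\in X:\chi_C(x)\geq\al\}=C$, so for each $\al\in(0,1]$,
$$U(\al)=\bigcup_{u\in\mathcal{D}_{F(X)}}[u]_\al=\bigcup_{C\in\mathcal{D}}[\chi_C]_\al=\bigcup_{C\in\mathcal{D}}C=\mathbf{D}.$$
Consequently the clause ``$U(\al)$ is relatively compact in $(X,d)$ for each $\al\in(0,1]$'' collapses to ``$\mathbf{D}$ is relatively compact in $(X,d)$'', and the clause ``$U(\al)$ is compact in $(X,d)$ for each $\al\in(0,1]$'' collapses to ``$\mathbf{D}$ is compact in $(X,d)$''. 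The remaining clause, that $U=\mathcal{D}_{F(X)}$ is closed in $(F_{USCG}(X),H_{\rm end})$, is verbatim the closedness clause appearing in the corollary.

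With these translations in hand, the three conditions (\romannumeral1)--(\romannumeral3) of Theorem \ref{cfegum} for $U=\mathcal{D}_{F(X)}$ read exactly as the three conditions (\romannumeral1)--(\romannumeral3) of the corollary, so their mutual equivalence follows at once. There is essentially no hard step: the only point requiring attention is the computation $U(\al)=\mathbf{D}$, which is precisely the same identity that powers Corollaries \ref{trcfegnum} and \ref{ctybfegnu} from Theorems \ref{rcfegnum} and \ref{tbfegnu}. I would also briefly note the degenerate case $\mathcal{D}=\emptyset$, where $\mathbf{D}=\emptyset$ (using the convention that an empty union is $\emptyset$) and all three statements hold trivially, so the equivalence persists.
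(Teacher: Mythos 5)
Your proposal is correct and follows exactly the route the paper intends: the corollary is obtained by applying Theorem \ref{cfegum} to $U=\mathcal{D}_{F(X)}\subseteq F_{USCG}(X)$ and using the identity $U(\al)=\bigcup_{C\in\mathcal{D}}[\chi_C]_\al=\mathbf{D}$ for every $\al\in(0,1]$, so that the level-wise conditions collapse to conditions on $\mathbf{D}$. Nothing is missing.
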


From Proposition \ref{mrn} and clauses (\romannumeral3) and (\romannumeral4) of Proposition \ref{crpn},
we obtain that Corollaries \ref{trcfegnum}, \ref{ctybfegnu} and \ref{crfegum}
are equivalent forms of
Theorems
 \ref{rce}, \ref{tbe} and \ref{come}, respectively.
So we can see
Theorems
\ref{rce}, \ref{tbe} and \ref{come} as special cases of
Theorems \ref{rcfegnum},
\ref{tbfegnu},
and \ref{cfegum}, respectively.

\section{Characterizations of compactness in $(F^r_{USCG} (X), H_{\rm end})$}

In this section, we first investigate
 the properties of the $H_{\rm end}$ metric.
Then
based on the characterizations of relative compactness,
total boundedness and compactness in $(F_{USCG} (X), H_{\rm end})$
given in Section \ref{cmg},
we give
characterizations of
relatively compact sets,
totally bounded sets, and compact sets in
$(F^r_{USCG} (X), H_{\rm end})$, $r\in [0,1]$.
$(F^r_{USCG} (X), H_{\rm end})$, $r\in [0,1]$ are a kind of subspaces of $(F_{USCG} (X), H_{\rm end})$.
Each element
in $F^r_{USCG} (X)$ takes $r$ as its maximum value.
$(F^1_{USCG} (X), H_{\rm end})$
is one of these subspaces.

We claim that
for $D, E \in C(X \times [0,1])$,
\begin{equation}\label{srhenu}
  H (D, E) \geq |S_D - S_E|.
\end{equation}
To see this,
let $D,E \in C(X \times    [0,1])$.
If $|S_D - S_E|=0$, then  \eqref{srhenu} is true. If $|S_D - S_E|>0$. Assume that $S_D > S_E$.
Note that for each $(x, t) \in D$ with $t> S_E$,
$\overline{d} ((x, t) , E) \geq t - S_E$. Thus
$H (D, E) \geq \sup   \{     t - S_E : (x, t) \in D \mbox{ with } t> S_E \}= S_D - S_E$.
So \eqref{srhenu} is true.

\begin{pp}\label{semg}
   Let $C$ and $C_n$, $n=1,2,\ldots$, be sets in $C(X \times [0,1])$.
If $H (C_n, C) \to 0$ as $n\to \infty$,
 then $S_{C_n} \to S_C$ as $n\to \infty$.
\end{pp}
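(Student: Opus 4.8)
The plan is to deduce this directly from inequality \eqref{srhenu}, which was established immediately before the statement. Since $C$ and $C_n$ both lie in $C(X\times[0,1])$, the inequality applies with $D=C_n$ and $E=C$, giving
$$
H(C_n, C) \geq |S_{C_n} - S_C|
$$
for every $n$. First I would simply record this observation: the height functional $S_{(\cdot)}$ is $1$-Lipschitz with respect to the Hausdorff metric $H$ on $C(X\times[0,1])$.

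Given the hypothesis $H(C_n, C)\to 0$ as $n\to\infty$, the inequality forces $|S_{C_n}-S_C|\to 0$, which is exactly the assertion $S_{C_n}\to S_C$. So the entire argument is a one-line squeeze using \eqref{srhenu}, and there is no substantive obstacle to overcome; the work has already been done in verifying \eqref{srhenu} itself (which handles the cases $S_D=S_E$ and $S_D>S_E$ separately by estimating $\overline{d}((x,t),E)\geq t-S_E$ for points $(x,t)\in D$ with $t>S_E$).

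If one wished to avoid citing \eqref{srhenu} and argue from scratch, the only mild care needed would be the convention $\sup\emptyset=0$ used for heights, together with the observation that a point realizing a height near $S_D$ in $D$ must have an $H$-close companion in $E$ whose second coordinate is bounded below by (approximately) that height, and symmetrically; but since \eqref{srhenu} is already available in the excerpt, invoking it is by far the cleanest route.
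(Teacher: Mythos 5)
Your proof is correct and is essentially identical to the paper's, which also deduces the result immediately from the Lipschitz inequality \eqref{srhenu} applied with $D=C_n$ and $E=C$. Nothing further is needed.
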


\begin{proof}
  The desired result follows immediately from \eqref{srhenu}.
\end{proof}

Let $u\in F(X)$.
Define $\bm{S_u} :=  \sup\{u(x): x\in X\}$. Then $S_u\in [0,1]$.
We can see that $S_u = S_{{\rm end}\, u}$.
Clearly for each $\al\in [0,1]\setminus [0,S_u]$, $[u]_\al=\emptyset$.
If $S_u>0$, then for each $\al\in [0,S_u)$, $[u]_\al\not=\emptyset$.
$[u]_{S_u} = \emptyset$ is possible. See \eqref{bsr} and Examples \ref{empu} and \ref{rnce}.

Let $u\in F(X)$.
It is easy to see that
\begin{equation}\label{bsr}
\mbox{$u=\emptyset_{F(X)} \Leftrightarrow [u]_0=\emptyset \Leftrightarrow$ for each $x\in X$, $u(x)=0  \Leftrightarrow$ $S_u=0$.}
  \end{equation}

From \eqref{srhenu}, we have that
for each $u,v \in F_{USC} (X)$,
\begin{equation}\label{srhe}
  H_{\rm end} (u, v) \geq |S_u - S_v|.
\end{equation}

\begin{pp}\label{sem}
  Let $u$ and $u_n$, $n=1,2,\ldots$, be fuzzy sets in $F_{USC}(X)$.
If $H_{\rm end} (u_n, u) \to 0$ as $n\to \infty$,
 then $S_{u_n} \to S_u$ as $n\to \infty$.
\end{pp}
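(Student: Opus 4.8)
The plan is to derive this as an immediate consequence of the height estimate \eqref{srhe} that has already been established just above the statement. Recall that \eqref{srhe} asserts $H_{\rm end}(u,v) \geq |S_u - S_v|$ for all $u,v \in F_{USC}(X)$, which itself was obtained by specializing the Hausdorff-metric inequality \eqref{srhenu} to the endographs ${\rm end}\, u$ and ${\rm end}\, v$ together with the identity $S_u = S_{{\rm end}\, u}$. So almost all of the genuine content lives in \eqref{srhenu}, and the present proposition is essentially a restatement of it at the level of fuzzy sets.

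Concretely, first I would apply \eqref{srhe} with $v$ replaced by $u_n$ to get
\begin{equation*}
0 \leq |S_{u_n} - S_u| \leq H_{\rm end}(u_n, u)
\end{equation*}
for every $n$. Then, since $H_{\rm end}(u_n, u) \to 0$ as $n \to \infty$ by hypothesis, a squeeze argument gives $|S_{u_n} - S_u| \to 0$, which is exactly $S_{u_n} \to S_u$. This completes the argument in one line once \eqref{srhe} is invoked.

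Alternatively, and equivalently, I would route the proof through Proposition \ref{semg}: set $C_n := {\rm end}\, u_n$ and $C := {\rm end}\, u$, which lie in $C(X\times[0,1])$ because $u_n, u \in F_{USC}(X)$ (see the discussion following Remark \ref{bcp}). Then $H(C_n, C) = H_{\rm end}(u_n, u) \to 0$, so Proposition \ref{semg} yields $S_{C_n} \to S_C$; finally the identity $S_u = S_{{\rm end}\, u}$ converts this back into $S_{u_n} \to S_u$. Either route is purely formal, so there is no real obstacle here — the only thing to be careful about is citing the correct prior item (\eqref{srhe} or Proposition \ref{semg}) and recalling that the endographs are closed so that these estimates apply.
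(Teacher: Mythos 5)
Your proof is correct and matches the paper's own argument, which likewise derives the result immediately from \eqref{srhe} and notes that it is also a corollary of Proposition \ref{semg}. Both routes you describe are exactly the ones the paper uses, just spelled out in more detail.
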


\begin{proof}
The desired result follows immediately from \eqref{srhe}.
Clearly, the desired conclusion is a corollary of
Proposition \ref{semg}.
\end{proof}

Let $u,v \in F_{USC} (X)$.
Clearly
$
  H^*({\rm end}\, u, {\rm end}\,v) \leq H^*({\rm end}\, u, X\times\{0\})=S_u
$, and so
$
  H_{\rm end} (u, v) \leq \max\{S_u, S_v\}
$.

Here we mention that the contents
from ``we claim that" in this section to the end of the above paragraph
 including \eqref{srhenu} and \eqref{srhe} remain true if $\overline{d}$ is replaced by $d'$,
$H_{\rm end}$ is replaced
 by $H'_{\rm end}$, and $H^*$ and $H$ on $C(X\times [0,1])\cup\{\emptyset\}$ are replaced
 by
$H'^*$ and $H'$ on $C(X\times [0,1])\cup\{\emptyset\}$, respectively.

\begin{re}\label{bmer}
  {\rm
Let $u\in F(X)$. $\max\{u(x): x\in X\}$ may not exist.
Clearly the conditions (a) $\max\{u(x): x\in X\}$ exists, (b)
there is an $x$ with $u(x)=S_u$, and (c)
 $S_u=\max\{u(x): x\in X\}$, are equivalent. We claim the following:
\\
(\rmn1) If $[u]_{S_u} \not= \emptyset$, then
$S_u =  \max\{u(x): x\in X\}$ and $S_u>0$.
\\
(\rmn2)
If $S_u=0$ (i.e. $[u]_0=\emptyset$), then $u=\emptyset_{F(X)}$ and so
$\max\{u(x): x\in X\} = 0 = S_u$.
\\
(\rmn3)
Let $S_u>0$. Then $S_u =  \max\{u(x): x\in X\}$ if and only if $[u]_{S_u} \not= \emptyset$.

Assume that $[u]_{S_u} \not= \emptyset$. Then (b) holds, and hence (c) is true.
Furthermore $S_u>0$ since otherwise $S_u=0$ and then, by \eqref{bsr}, $[u]_{S_u} = \emptyset$, which is a contradiction.
 So (\rmn1) is true.   (\rmn2) is obvious.
We can see that
if $S_u>0$ and $S_u =  \max\{u(x): x\in X\}$ then $[u]_{S_u}=\{x: u(x)\geq S_u\} \not= \emptyset$.
From this and (\rmn1), we obtain (\rmn3).

}
\end{re}

\begin{pp} \label{mar}
(\romannumeral1)
Let $u\in F_{USC}(X)$.
Let $\al\in [0, S_u]$ with $[u]_\al \in K(X)$. Then
for each $\beta\in [\al, S_u]$,
$[u]_{\beta}  \in K(X)$. And
$S_u =  \max\{u(x): x\in X\}$.

(\romannumeral2)
Let $u \in F_{USCG} (X)\setminus \{\emptyset_{F(X)}\}$. Then
(\rmn2-1)
$S_u>0$, (\rmn2-2) for each
$\al\in [0, S_u]$, $[u]_\al \not=\emptyset$, and (\rmn2-3)
for each
$\al\in (0, S_u]$, $ [u]_\al \in K(X)$.

(\romannumeral3)
Let $u \in F_{USCG} (X)$. Then
$S_u =  \max\{u(x): x\in X\}$.
\end{pp}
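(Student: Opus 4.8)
The plan is to reduce all three parts to the single assertion that the height is attained, i.e. that $[u]_{S_u}\neq\emptyset$, and to obtain this from Proposition \ref{bpue}(\rmn2) applied to a truncation of the endograph.

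For part (\rmn1) I would fix $\al\in[0,S_u]$ with $[u]_\al\in K(X)$ and set $v:={\rm end}_\al\,u = {\rm end}\,u\cap([u]_\al\times[\al,1])$. Then $v$ is a nonempty closed subset of $X\times[\al,1]$ (it contains $[u]_\al\times\{\al\}$, and ${\rm end}\,u$ is closed because $u\in F_{USC}(X)$), with $\langle v\rangle_\beta=[u]_\beta$ for every $\beta\in[\al,1]$; in particular the family $\{\langle v\rangle_\beta\}$ is nested, $\langle v\rangle_\al=[u]_\al\in K(X)$, and $S_v=S_u$ (because $v\subseteq{\rm end}\,u$ while $[u]_\beta\neq\emptyset$ for $\al\le\beta<S_u$). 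Applying Proposition \ref{bpue}(\rmn2) with $c_0=c_2=\al$ and $c_1=1$ then yields $[u]_\beta=\langle v\rangle_\beta\in K(X)$ for every $\beta\in[\al,S_u]$; taking $\beta=S_u$ gives $[u]_{S_u}\neq\emptyset$, whence Remark \ref{bmer}(\rmn1) delivers $S_u=\max\{u(x):x\in X\}$. This application is the main step: it is precisely here that the compactness hypothesis is converted into attainment of the supremum (morally, a nested family of nonempty compact sets has nonempty intersection). Everything afterwards is bookkeeping with the nestedness $[u]_\xi\subseteq[u]_\eta$ for $\eta\le\xi$ and the definitions.

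For part (\rmn2), clause (\rmn2-1) is immediate from \eqref{bsr}, since $u\neq\emptyset_{F(X)}$ forces $S_u>0$. As $S_u>0$ I may pick $\al_0\in(0,S_u)$; then $[u]_{\al_0}\neq\emptyset$, and since $u\in F_{USCG}(X)$ we have $[u]_{\al_0}\in K(X)$. Part (\rmn1) applied with $\al=\al_0$ then gives $[u]_{S_u}\in K(X)$, in particular $[u]_{S_u}\neq\emptyset$. Clause (\rmn2-2) follows by nestedness, since $[u]_\al\supseteq[u]_{S_u}\neq\emptyset$ for every $\al\in[0,S_u]$; and clause (\rmn2-3) follows by combining (\rmn2-2) with the defining property $[u]_\al\in K(X)\cup\{\emptyset\}$ ($\al\in(0,1]$) of $F_{USCG}(X)$.

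For part (\rmn3) I split into cases. If $u=\emptyset_{F(X)}$ then $S_u=0$ and Remark \ref{bmer}(\rmn2) gives $\max\{u(x):x\in X\}=0=S_u$. If $u\neq\emptyset_{F(X)}$, then (\rmn2-2) with $\al=S_u$ gives $[u]_{S_u}\neq\emptyset$, so Remark \ref{bmer}(\rmn1) shows the supremum is attained and $S_u=\max\{u(x):x\in X\}$.
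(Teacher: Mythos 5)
Your proof is correct and follows essentially the same route as the paper's: all three parts are reduced to showing $[u]_{S_u}\neq\emptyset$ via Proposition \ref{bpue}(\rmn2) together with Remark \ref{bmer}, and parts (\rmn2) and (\rmn3) are argued just as in the paper (the paper takes $\al_0=S_u/2$ where you take a generic $\al_0\in(0,S_u)$). The only difference in part (\rmn1) is that you apply Proposition \ref{bpue}(\rmn2) to the truncation ${\rm end}_\al\,u$ with $c_0=c_2=\al$, whereas the paper applies it to ${\rm end}\,u$ with $c_0=0$, $c_2=\al$; your variant is in fact slightly cleaner at $\al=0$, since $\langle{\rm end}\,u\rangle_0=X$ rather than $[u]_\al$, while $\langle{\rm end}_\al\,u\rangle_\al=[u]_\al$ is exactly the compact set hypothesized.
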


\begin{proof}
First, we show (\romannumeral1).
As ${\rm end}\, u \in C(X\times [0,1])$,
by  Proposition \ref{bpue} (\rmn2) (put $c_0=0$, $c_1=1$ and $c_2=\al$ in Proposition \ref{bpue} (\rmn2)),
for each $\beta\in [\al, S_u]$,
$[u]_{\beta}  \in K(X)$.
Then $[u]_{S_u}  \not= \emptyset$
and so $S_u =  \max\{u(x): x\in X\}$. Thus (\romannumeral1) is true.

Another proof of (\rmn1) is as follows. Firstly we show that $[u]_{S_u} \not= \emptyset$.
If $\al = S_u$, then $[u]_{S_u} \not= \emptyset$.
If $\al< S_u$, then pick a sequence $\{x_n\}$ in $[u]_\al$ with $u(x_n) \to S_u$.
From the compactness of $[u]_\al$, there is a subsequence
$\{x_{n_k}\}$ of $\{x_n\}$ such that
$\{x_{n_k}\}$ converges to a point $x$ in $[u]_\al$.
Thus
$u(x) \geq \lim_{k\to\infty} u(x_{n_k}) = S_u$.
Hence
$u(x) = S_u$, and therefore $[u]_{S_u}  \not= \emptyset$.
So $S_u =  \max\{u(x): x\in X\}$.
Let $\beta\in [\al, S_u]$.
Then $[u]_\beta \supseteq [u]_{S_u}  \not= \emptyset$ and hence $[u]_\beta \in C(X)$.
Thus $[u]_{\beta}\in K(X)$ because $[u]_{\beta}$ is a nonempty closed subset of $[u]_\al$ and $[u]_\al\in K(X)$.
So (\romannumeral1) is true.

Now we show (\romannumeral2).
By \eqref{bsr}, (\rmn2-1) is true.
Note that for each $\al\in (0,1]$, $[u]_{\alpha}\in K(X)\cup\{\emptyset\}$,
and that
for each $\al\in [0, S_u)$, $[u]_{\alpha}\not=\emptyset$.
So $[u]_{S_u/2}\in K(X)$.
Then by (\rmn1),
$[u]_{S_u}\not=\emptyset$.
Hence (\rmn2-2) is true, and
thus (\rmn2-3) is true.
So (\romannumeral2) is proved.

Finally we show (\romannumeral3).
If $u \in F_{USCG} (X) \setminus \{\emptyset_{F(X)}\}$, then from (\romannumeral2-2), $[u]_{S_u}\not=\emptyset$ and hence $S_u =  \max\{u(x): x\in X\}$.
If $u = \emptyset_{F(X)}$, then
$S_u = 0= \max\{u(x): x\in X\}$.
So (\rmn3) is true.

\end{proof}

Let $r\in [0,1]$. Define
\begin{gather*}
 F^{'r}_{USC} (X) = \{ u\in F_{USC} (X):  r= S_u \},
\\
 F^r_{USC} (X) = \{ u\in F_{USC} (X):  r=\max\{u(x): x\in X\}\},
 \\
  F^{'r}_{USCG} (X) = \{ u\in F_{USCG}(X): r = S_u \},
\\
  F^r_{USCG} (X) = \{ u\in F_{USCG}(X):   r=\max\{u(x): x\in X\}\},
  \\
  F^{'r}_{USCB} (X) = \{ u\in F_{USCB}(X):  r= S_u\},
\\
  F^r_{USCB} (X) = \{ u\in F_{USCB}(X):   r=\max\{u(x): x\in X\}\}.
\end{gather*}
Let $r\in [0,1]$. We can see that
 $F^r_{USC} (X)\subseteq  F^{'r}_{USC} (X)$.
Clearly,
 $F^{'0}_{USC} (X)= F^0_{USC} (X)=F^0_{USCG} (X) = F^0_{USCB} (X) = \{\emptyset_{F(X)}\}$.

Let $u\in F_{USC}(X)$.
Assume $r\in [0,1]$. Then $u\in   F^r_{USC} (X) $ if and only if $r=\max\{u(x): x\in X\}=S_u$.
Assume $r\in (0,1]$. Then $u\in F^r_{USC} (X) $ if and only if
 $S_u=r$ and $[u]_{S_u} \not= \emptyset$;
$u\in F^{'r}_{USC} (X) \setminus F^r_{USC} (X) $ if and only if
 $S_u=r$ and $[u]_{S_u} = \emptyset$ (see also Remark \ref{bmer}(\rmn3)).

For $r\in (0,1]$,
$F^r_{USC} (X)\subsetneqq  F^{'r}_{USC} (X)$
is possible. See Examples \ref{empu} and \ref{rnce}.

\begin{eap}\label{empu} {\rm
Let $r\in (0,1]$. Define $u\in F_{USC}(\mathbb{R})$ by putting
\[
[u]_\al=\left\{
  \begin{array}{ll}
   (-\infty, -\frac{1}{r-\al}], & \al\in [0, r), \\
\emptyset, & \al\in [r,1].
  \end{array}
\right.
\]
Then $S_u=r$ and $[u]_{S_u} = \emptyset$. This means that
$u\in F^{'r}_{USC} (\mathbb{R})$ but $u\notin F^r_{USC}(\mathbb{R})$.
}
\end{eap}

\begin{pp}\label{ace}
  Let $r\in [0,1]$. Then
\\
(\romannumeral1) \ $F^{r}_{USCG}(X)=F^{'r}_{USCG}(X)$, $F^{r}_{USCB}(X)=F^{'r}_{USCB}(X)$,
\\
(\romannumeral2) \ $F^{'r}_{USC}(X)$ is a closed subset of $(F_{USC}(X), H_{\rm end})$,
\\
(\romannumeral3) \
$F^{r}_{USCG}(X)$ is a closed subset of $(F_{USCG}(X), H_{\rm end})$, and
\\
(\romannumeral4) \
$F^{r}_{USCB}(X)$ is a closed subset of $(F_{USCB}(X), H_{\rm end})$.
\end{pp}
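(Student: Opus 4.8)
The plan is to reduce everything to two facts already available: Proposition~\ref{mar}(\romannumeral3), which says that every $u\in F_{USCG}(X)$ attains its supremum, so that $S_u=\max\{u(x):x\in X\}$; and Proposition~\ref{sem}, which says that the height functional $u\mapsto S_u$ is continuous with respect to $H_{\rm end}$. Part~(\romannumeral1) is then immediate from the first fact. The inclusions $F^r_{USCG}(X)\subseteq F^{'r}_{USCG}(X)$ and $F^r_{USCB}(X)\subseteq F^{'r}_{USCB}(X)$ hold trivially, since a maximum, when it exists, equals the supremum. For the reverse inclusions I would take $u\in F^{'r}_{USCG}(X)$, so $u\in F_{USCG}(X)$ with $S_u=r$; Proposition~\ref{mar}(\romannumeral3) gives $\max\{u(x):x\in X\}=S_u=r$, whence $u\in F^r_{USCG}(X)$. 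Because $F_{USCB}(X)\subseteq F_{USCG}(X)$, the identical argument applies to the $USCB$ case.

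For part~(\romannumeral2) I would verify sequential closedness, which suffices since the ambient space is metric. Let $\{u_n\}$ be a sequence in $F^{'r}_{USC}(X)$ with $H_{\rm end}(u_n,u)\to 0$ for some $u\in F_{USC}(X)$. Since $S_{u_n}=r$ for every $n$, Proposition~\ref{sem} yields $S_u=\lim_{n\to\infty}S_{u_n}=r$, so $u\in F^{'r}_{USC}(X)$ and (\romannumeral2) follows.

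Parts~(\romannumeral3) and~(\romannumeral4) I would then obtain by combining (\romannumeral1) and (\romannumeral2). Using (\romannumeral1) together with the observation that $F^{'r}_{USCG}(X)=F^{'r}_{USC}(X)\cap F_{USCG}(X)$ (and likewise with $F_{USCB}(X)$ in place of $F_{USCG}(X)$, both being contained in $F_{USC}(X)$), I get $F^r_{USCG}(X)=F^{'r}_{USC}(X)\cap F_{USCG}(X)$. As $F^{'r}_{USC}(X)$ is closed in $(F_{USC}(X),H_{\rm end})$ by (\romannumeral2), its intersection with the subspace $F_{USCG}(X)$ is closed in that subspace, which gives (\romannumeral3); the same reasoning with $F_{USCB}(X)$ gives (\romannumeral4). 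Alternatively, each of (\romannumeral3) and~(\romannumeral4) can be proved directly by mimicking the sequence argument of (\romannumeral2) and invoking Proposition~\ref{mar}(\romannumeral3) at the last step to upgrade $S_u=r$ to $\max\{u(x):x\in X\}=r$.

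None of the steps presents a genuine obstacle; the only point requiring care is that the equality $F^r=F^{'r}$ in the $USCG$ and $USCB$ settings genuinely uses the attainment of the supremum and is \emph{false} on all of $F_{USC}(X)$, as Example~\ref{empu} shows. Consequently the closedness asserted in (\romannumeral3)--(\romannumeral4) relies on restricting to $F_{USCG}(X)$ (respectively $F_{USCB}(X)$), where height and maximum coincide; outside this class one can only assert the closedness of $F^{'r}_{USC}(X)$ established in (\romannumeral2).
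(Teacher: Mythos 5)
Your proposal is correct and follows essentially the same route as the paper: part (\romannumeral1) via Proposition \ref{mar}(\romannumeral3), part (\romannumeral2) via the height-continuity in Proposition \ref{sem}, and parts (\romannumeral3)--(\romannumeral4) by combining these (the paper applies Proposition \ref{sem} directly in the subspaces and then invokes (\romannumeral1), which is equivalent to your intersection-with-a-closed-set argument). No gaps.
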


\begin{proof}

We can see that $F^{r}_{USCG}(X)\subseteq F^{'r}_{USCG}(X)$.
Let $u\in F^{'r}_{USCG}(X)$. Then by Proposition \ref{mar} (\romannumeral3), $r=S_u=\max\{u(x): x\in X\}$. So
$u\in F^{r}_{USCG}(X)$. Since $u\in F^{'r}_{USCG}(X)$ is arbitrary,
we have that $F^{'r}_{USCG}(X)\subseteq F^{r}_{USCG}(X)$.
Thus $F^{'r}_{USCG}(X)= F^{r}_{USCG}(X)$.
 Hence
$F^{'r}_{USCB}(X)=F^{'r}_{USCG}(X)\cap F_{USCB}(X) =F^{r}_{USCG}(X)\cap F_{USCB}(X) = F^{r}_{USCB}(X)$. So (\rmn1) is true.

The fact that
$F^{'r}_{USCB}(X)= F^{r}_{USCB}(X)$ can also be shown in a similar fashion to the fact that $F^{'r}_{USCG}(X)= F^{r}_{USCG}(X)$

By Proposition \ref{sem}, (\romannumeral2) is true.

From Proposition \ref{sem}, $F^{'r}_{USCG}(X)$ is a closed subset of $(F_{USCG}(X), H_{\rm end})$, and
$F^{'r}_{USCB}(X)$ is a closed subset of $(F_{USCB}(X), H_{\rm end})$. It then
follows from (\romannumeral1) that (\romannumeral3) and (\romannumeral4) are true.

\end{proof}

 $F^{r}_{USC}(X)$ may not be a closed subset of $(F_{USC}(X), H_{\rm end})$.
The following is a such example.

\begin{eap}\label{rnce}
  {\rm
Let $Y=\mathbb{R}\setminus \{1\}$. We also use $Y$ to denote
the metric space $(Y,\rho)$, where the metric $\rho$ on $Y$ is defined as $\rho(x,y) = |x-y|$ for each $x,y\in Y$.

Let $r\in (0,1]$.
Let $n\in \mathbb{N}$. Define $u_n\in F_{USC}(Y)$ by putting
\[
[u_n]_\al=\left\{
  \begin{array}{ll}
    [1-\frac{1}{n}, 2-\frac{\al}{r}] \setminus \{1\}, & \al\in [0,r],
\\
\emptyset, & \al\in (r,1].
  \end{array}
\right.\]
Then $S_{u_n}=r$ and $[u_n]_r\not=\emptyset$. So $u_n\in F^{r}_{USC}(Y)$.

Define $u\in F_{USC}(Y)$ by putting \[
[u]_\al=\left\{
  \begin{array}{ll}
    (1, 2-\frac{\al}{r}], & \al\in [0,r],
\\
\emptyset, & \al\in (r,1].
  \end{array}
\right.\]
Then $S_u=r$ and $[u]_r = \emptyset$. So
$u\in F^{'r}_{USC}(Y) \setminus F^{r}_{USC}(Y)$.

Clearly ${\rm end}\, u \subseteq {\rm end}\, u_n$.
Observe that
$H_{\rm end}(u_n, u) = H^*( {\rm end}\, u_n, {\rm end}\, u )= \overline{\rho} ((1-\frac{1}{n}, r), \ {\rm end}\, u) =\min\{1/n, r\}$. So
$H_{\rm end}(u_n, u)\to 0$ as $n\to\infty$.
Now we know that the sequence $\{u_n\}$ in $F^{r}_{USC}(Y)$
converges to $u\in F_{USC}(Y)\setminus F^{r}_{USC}(Y)$ according to $H_{\rm end}$ metric.
Thus $F^{r}_{USC}(Y)$ is not a closed subset of $(F_{USC}(Y), H_{\rm end})$.

}
\end{eap}

\begin{lm}\label{cpu}
Let $r\in [0,1]$ and
  let $U$ be a subset of $F^r_{USCG} (X)$. Then the following
   (\romannumeral1-1) is equivalent to (\romannumeral1-2), and
     (\romannumeral2-1) is equivalent to (\romannumeral2-2).
  \\
   (\romannumeral1-1) \ $U$ is relatively compact in $(F_{USCG} (X), H_{\rm end})$.
  \\
  (\romannumeral1-2) \ $U$ is relatively compact in $(F^r_{USCG} (X), H_{\rm end})$.
  \\
    (\romannumeral2-1) \ $U$ is closed in $(F_{USCG} (X), H_{\rm end})$.
\\
     (\romannumeral2-2) \ $U$ is closed in $(F^r_{USCG} (X), H_{\rm end})$.
\end{lm}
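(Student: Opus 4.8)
The plan is to reduce both equivalences to the single structural fact, established in Proposition \ref{ace}(\romannumeral3), that $F^r_{USCG}(X)$ is a closed subset of $(F_{USCG}(X), H_{\rm end})$, together with the convention (adopted in Section 2) that $H_{\rm end}$ on $F^r_{USCG}(X)$ is precisely the metric induced from $(F_{USCG}(X), H_{\rm end})$. Writing $Z := (F_{USCG}(X), H_{\rm end})$ and $Y := (F^r_{USCG}(X), H_{\rm end})$, we are thus in the setting of a subset $U \subseteq Y$ where $Y$ is a \emph{closed} subspace of the metric space $Z$. Both claims then follow from elementary facts about closed subspaces, which I would spell out for the two notions at hand. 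The case $r=0$, in which $F^0_{USCG}(X)=\{\emptyset_{F(X)}\}$ is a singleton, is a closed subspace as well and is covered by the same argument without change.

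For the equivalence of (\romannumeral2-1) and (\romannumeral2-2), I would argue by standard subspace topology. Since $U\subseteq Y$, if $U$ is closed in $Z$ then $U = U\cap Y$ exhibits $U$ as the intersection of a closed subset of $Z$ with $Y$, hence $U$ is closed in the subspace $Y$; this gives (\romannumeral2-1)$\Rightarrow$(\romannumeral2-2), and it uses nothing about $Y$. Conversely, if $U$ is closed in $Y$, then $U = C\cap Y$ for some $C$ closed in $Z$; because $Y$ itself is closed in $Z$ by Proposition \ref{ace}(\romannumeral3), the set $C\cap Y$ is an intersection of two closed subsets of $Z$ and so is closed in $Z$, giving (\romannumeral2-2)$\Rightarrow$(\romannumeral2-1).

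For the equivalence of (\romannumeral1-1) and (\romannumeral1-2), I would invoke the sequential criterion for relative compactness in a metric space recorded in Section \ref{cmg}: a set is relatively compact iff every sequence in it admits a subsequence convergent in the ambient space. The direction (\romannumeral1-2)$\Rightarrow$(\romannumeral1-1) is immediate, since a subsequence that converges in $Y$ converges in $Z$ as well (the two spaces carry the same induced metric). The only nontrivial step, and the expected main obstacle, is (\romannumeral1-1)$\Rightarrow$(\romannumeral1-2): given any sequence in $U\subseteq Y$, relative compactness in $Z$ supplies a subsequence converging to some limit $z\in Z$; since this subsequence lies in $Y$ and $Y$ is closed in $Z$ by Proposition \ref{ace}(\romannumeral3), the limit satisfies $z\in Y$, so the subsequence in fact converges in $Y$. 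Hence every sequence in $U$ has a subsequence convergent in $Y$, which is exactly relative compactness of $U$ in $Y$. Thus both equivalences rest entirely on the closedness of $F^r_{USCG}(X)$ in $F_{USCG}(X)$.
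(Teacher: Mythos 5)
Your proposal is correct and takes essentially the same approach as the paper: the paper's proof consists of citing Proposition \ref{ace}(\romannumeral3) (closedness of $F^r_{USCG}(X)$ in $(F_{USCG}(X), H_{\rm end})$) and asserting that both equivalences follow. You simply spell out the routine subspace-topology and sequential-compactness details that the paper leaves implicit.
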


\begin{proof}
   Clause (\romannumeral3) of Proposition \ref{ace}
  says that
  $F^{r}_{USCG}(X)$ is a closed subset of $(F_{USCG}(X), H_{\rm end})$.
  From this we obtain that
   (\romannumeral1-1)$\Leftrightarrow$(\romannumeral1-2), and
     (\romannumeral2-1)$\Leftrightarrow$(\romannumeral2-2).

\end{proof}

In this paper, we suppose that $(r,r]=\emptyset$ for $r\in \mathbb{R}$.

\begin{lm}\label{pmu}
Let $r\in [0,1]$ and
  let $U$ be a subset of $F^r_{USCG} (X)$. Then the following
   (\romannumeral1-1) is equivalent to (\romannumeral1-2), (\romannumeral2-1) is equivalent to (\romannumeral2-2), and
     (\romannumeral3-1) is equivalent to (\romannumeral3-2).
  \\
   (\romannumeral1-1) \
$U(\al)$
is relatively compact in $(X, d)$ for each $\al \in (0,1]$.
\\
  (\romannumeral1-2) \
$U(\al)$
is relatively compact in $(X, d)$ for each $\al \in (0,r]$.
  \\
   (\romannumeral2-1)
$U(\al)$
is totally bounded in $(X,d)$ for each $\al \in (0,1]$.
\\
  (\romannumeral2-2) $U(\al)$
is totally bounded in $(X,d)$ for each $\al \in (0,r]$.
\\
 (\romannumeral3-1) \
$U(\al)$
is compact in $(X, d)$ for each $\al \in (0,1]$.
\\
  (\romannumeral3-2) \
$U(\al)$
is compact in $(X, d)$ for each $\al \in (0,r]$.
\end{lm}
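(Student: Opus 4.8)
The plan is to reduce all three equivalences to a single structural observation: every member of $U$ has height exactly $r$, so all cuts above level $r$ are empty. First I would invoke Proposition \ref{ace}(\romannumeral1), which gives $F^r_{USCG}(X) = F^{'r}_{USCG}(X)$; hence each $u\in U$ satisfies $S_u = r$. Consequently, for every $u\in U$ and every $\al\in (r,1]$ we have $[u]_\al = \emptyset$ (since $u(x)\leq S_u = r < \al$ for all $x\in X$), and therefore $U(\al) = \bigcup_{u\in U}[u]_\al = \emptyset$ for each $\al\in (r,1]$.

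The key point is then that the empty set is simultaneously relatively compact, totally bounded, and compact in $(X,d)$, directly from the definitions recalled at the start of Section \ref{cmg}: its closure is empty and hence compact, the empty set is a finite $\var$-approximation of itself for every $\var>0$, and the open-cover condition is vacuous. So each of the three properties holds automatically for $U(\al)$ whenever $\al\in (r,1]$.

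Finally I would split $(0,1] = (0,r]\cup(r,1]$ and treat the three equivalences uniformly. For any one of the three properties $P$, the statement ``$U(\al)$ has $P$ for all $\al\in(0,1]$'' is equivalent to ``$U(\al)$ has $P$ for all $\al\in(0,r]$'': the forward implication is trivial because $(0,r]\subseteq(0,1]$, while the converse follows because $P$ holds automatically on $(r,1]$ by the preceding step. Taking $P$ to be relative compactness, total boundedness, and compactness in turn yields (\romannumeral1-1)$\Leftrightarrow$(\romannumeral1-2), (\romannumeral2-1)$\Leftrightarrow$(\romannumeral2-2), and (\romannumeral3-1)$\Leftrightarrow$(\romannumeral3-2), respectively. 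The degenerate case $r=0$, where $(0,r]=\emptyset$ and $U\subseteq\{\emptyset_{F(X)}\}$, is subsumed by the same argument, since then $(r,1]=(0,1]$ and every $U(\al)$ is empty. No genuine obstacle arises here; the only step requiring care is the appeal to Proposition \ref{ace}(\romannumeral1) to pass from the $\max$-based definition of $F^r_{USCG}(X)$ to the height identity $S_u=r$, which is precisely what forces the higher cuts to vanish and drives the whole reduction.
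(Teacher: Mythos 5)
Your proof is correct and follows essentially the same route as the paper, whose entire argument is the observation that $U(\al)=\emptyset$ for $\al\in(r,1]$ (which already follows from the $\max$-based definition of $F^r_{USCG}(X)$, so the appeal to Proposition \ref{ace}(\romannumeral1) is harmless but not needed). You have merely spelled out the details that the paper leaves implicit.
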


\begin{proof}
  Observe that if $\al\in (r,1]$ then
$U(\al) = \emptyset$.
  From this we obtain that
   (\romannumeral1-1)$\Leftrightarrow$(\romannumeral1-2),
    (\romannumeral2-1)$\Leftrightarrow$(\romannumeral2-2), and
     (\romannumeral3-1)$\Leftrightarrow$(\romannumeral3-2).

\end{proof}

\begin{tl}
 \label{crcfegnum}
 Let $r\in [0,1]$ and
  let $U$ be a subset of $F^r_{USCG} (X)$. Then the
  following properties are equivalent.
  \\
   (\romannumeral1) \ $U$ is relatively compact in $(F_{USCG} (X), H_{\rm end})$.
  \\
  (\romannumeral2) \ $U$ is relatively compact in $(F^r_{USCG} (X), H_{\rm end})$.
\\
  (\romannumeral3) \
$U(\al)$
is relatively compact in $(X, d)$ for each $\al \in (0,1]$.
\\
  (\romannumeral4) \
$U(\al)$
is relatively compact in $(X, d)$ for each $\al \in (0,r]$.
\end{tl}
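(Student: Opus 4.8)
The plan is to obtain all four equivalences by simply assembling three results already proved in this section, since the Corollary is designed to be a direct packaging of Theorem \ref{rcfegnum}, Lemma \ref{cpu}, and Lemma \ref{pmu}. Concretely, I would establish the chain
$$
(\romannumeral2) \Leftrightarrow (\romannumeral1) \Leftrightarrow (\romannumeral3) \Leftrightarrow (\romannumeral4),
$$
each single arrow being a citation, and then conclude that the four conditions are pairwise equivalent.

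First I would note that because $F^r_{USCG}(X)\subseteq F_{USCG}(X)$, the set $U$ is in particular a subset of $F_{USCG}(X)$, so Theorem \ref{rcfegnum} applies to $U$ and yields directly that $(\romannumeral1)$ holds if and only if $U(\al)$ is relatively compact in $(X,d)$ for each $\al\in(0,1]$; that is, $(\romannumeral1)\Leftrightarrow(\romannumeral3)$. Next, the equivalence $(\romannumeral1)\Leftrightarrow(\romannumeral2)$ is exactly clause $(\romannumeral1\text{-}1)\Leftrightarrow(\romannumeral1\text{-}2)$ of Lemma \ref{cpu}, whose validity rests on the fact that $F^r_{USCG}(X)$ is a closed subspace of $(F_{USCG}(X),H_{\rm end})$, established in Proposition \ref{ace}$(\romannumeral3)$. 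Finally, $(\romannumeral3)\Leftrightarrow(\romannumeral4)$ is clause $(\romannumeral1\text{-}1)\Leftrightarrow(\romannumeral1\text{-}2)$ of Lemma \ref{pmu}; here the point is that for every $u\in U\subseteq F^r_{USCG}(X)=F^{'r}_{USCG}(X)$ (using Proposition \ref{ace}$(\romannumeral1)$) one has $S_u=r$, hence $[u]_\al=\emptyset$ for $\al>r$, so $U(\al)=\bigcup_{u\in U}[u]_\al=\emptyset$ for every $\al\in(r,1]$, which makes the relative compactness conditions on $(0,1]$ and on $(0,r]$ coincide.

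Chaining these three equivalences gives the result. I do not expect any genuine obstacle: the only care needed is in verifying that the hypotheses of each cited statement are met, namely that $U$ is legitimately viewed as a subset of $F_{USCG}(X)$ for Theorem \ref{rcfegnum} and Lemma \ref{cpu}, and that the emptiness of $U(\al)$ for $\al\in(r,1]$ is recorded so that Lemma \ref{pmu} applies. Since all substantive content (the analytic characterization of relative compactness, the closedness of $F^r_{USCG}(X)$, and the collapse of the cut range) has already been carried out, the proof itself reduces to stating the three references in the correct order.
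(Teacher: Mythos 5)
Your proposal is correct and is essentially the paper's own proof: both assemble the result from Theorem \ref{rcfegnum}, Lemma \ref{cpu}, and Lemma \ref{pmu}, differing only in the immaterial order in which the three equivalences are chained. The supporting observations you record (closedness of $F^r_{USCG}(X)$ via Proposition \ref{ace} and the vanishing of $U(\al)$ for $\al\in(r,1]$) are exactly the facts the cited lemmas rest on.
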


\begin{proof}
  By Lemma \ref{cpu}, (\romannumeral1)$\Leftrightarrow$(\romannumeral2).
 From this and Theorem \ref{rcfegnum}, we obtain that
  (\romannumeral2)$\Leftrightarrow$(\romannumeral3).
  By Lemma \ref{pmu},
 (\romannumeral3)$\Leftrightarrow$(\romannumeral4), and the proof
  is complete.

\end{proof}

\begin{tl} \label{ctbfegnu}
   Let $r\in [0,1]$ and
   let $U$ be a subset of $F^r_{USCG} (X)$. Then
   the following properties are equivalent.
     \\
    (\romannumeral1) $U$ is totally bounded in $(F_{USCG} (X), H_{\rm end})$.
   \\
    (\romannumeral2) $U$ is totally bounded in $(F^r_{USCG} (X), H_{\rm end})$.
\\
  (\romannumeral3)
$U(\al)$
is totally bounded in $(X,d)$ for each $\al \in (0,1]$.
\\
  (\romannumeral4) $U(\al)$
is totally bounded in $(X,d)$ for each $\al \in (0,r]$.
\end{tl}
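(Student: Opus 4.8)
The plan is to establish the four-way equivalence by assembling three ingredients that are already available: the general principle that total boundedness is intrinsic to the ambient metric (recalled in Section \ref{cmg}: for $A\subseteq Y\subseteq X$, $A$ is totally bounded in $(Y,d)$ if and only if it is totally bounded in $(X,d)$), the cut-level characterization of total boundedness in $(F_{USCG}(X), H_{\rm end})$ furnished by Theorem \ref{tbfegnu}, and the range-reduction from $(0,1]$ to $(0,r]$ provided by Lemma \ref{pmu}. The overall shape mirrors the proof of the relative-compactness analogue, Corollary \ref{crcfegnum}, except that the step relating the two subspaces is simpler here, since total boundedness, unlike relative compactness, requires no closedness hypothesis.

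First I would prove (\romannumeral1)$\Leftrightarrow$(\romannumeral2). Because $F^r_{USCG}(X)$ is a subset of $F_{USCG}(X)$ and the metric on the former is the one induced from $(F_{USCG}(X), H_{\rm end})$, the intrinsic-total-boundedness principle applies directly with $Y=F^r_{USCG}(X)$ and the ambient space $F_{USCG}(X)$, giving that $U$ is totally bounded in $(F^r_{USCG}(X), H_{\rm end})$ if and only if $U$ is totally bounded in $(F_{USCG}(X), H_{\rm end})$. I emphasize that, in contrast with Corollary \ref{crcfegnum}, I do not invoke Lemma \ref{cpu} here, as the closedness of $F^r_{USCG}(X)$ is not needed.

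Next I would read off (\romannumeral1)$\Leftrightarrow$(\romannumeral3) immediately from Theorem \ref{tbfegnu}, which states precisely that $U$ is totally bounded in $(F_{USCG}(X), H_{\rm end})$ if and only if $U(\al)$ is totally bounded in $(X,d)$ for every $\al\in(0,1]$. Finally, (\romannumeral3)$\Leftrightarrow$(\romannumeral4) follows from Lemma \ref{pmu}: every $u\in F^r_{USCG}(X)$ attains maximum value $r$, so $[u]_\al=\emptyset$ for $\al\in(r,1]$ and hence $U(\al)=\emptyset$ is trivially totally bounded there, so the condition over $(0,1]$ collapses to the condition over $(0,r]$. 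Chaining these three equivalences yields the stated result.

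This argument has no substantive obstacle; it is a bookkeeping assembly of previously established facts. The one point requiring care is to justify (\romannumeral1)$\Leftrightarrow$(\romannumeral2) via the intrinsic nature of total boundedness rather than via the closedness-based Lemma \ref{cpu} used in the relative-compactness version, since $U$ need not be closed here and total boundedness is preserved under passage between a space and its subspace irrespective of closedness.
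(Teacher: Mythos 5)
Your proposal is correct and follows essentially the same route as the paper: the paper's proof also chains (\romannumeral1)$\Leftrightarrow$(\romannumeral2) (dismissed as ``clearly'', i.e.\ by the intrinsic nature of total boundedness recalled in Section \ref{cmg}, exactly as you justify it), Theorem \ref{tbfegnu} for the cut-level characterization, and Lemma \ref{pmu} for the reduction from $(0,1]$ to $(0,r]$. Your only addition is to make explicit why Lemma \ref{cpu} is not needed here, which matches the paper's implicit reasoning.
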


\begin{proof}
 Clearly (\romannumeral1)$\Leftrightarrow$(\romannumeral2).
From this and Theorem \ref{tbfegnu}, we obtain that
  (\romannumeral2)$\Leftrightarrow$(\romannumeral3).
  By Lemma \ref{pmu}, (\romannumeral3)$\Leftrightarrow$(\romannumeral4),
  and the proof is complete.

\end{proof}

\begin{tl}\label{ncfegum}
Let $r\in [0,1]$ and
   let $U$ be a subset of $F^r_{USCG} (X)$. Then the following properties are equivalent.
   \\
(\romannumeral1) $U$ is compact in $(F_{USCG} (X), H_{\rm end})$
\\
(\romannumeral2) $U$ is compact in $(F^r_{USCG} (X), H_{\rm end})$.
\\
 (\romannumeral3) $U(\al)$
is relatively compact in $(X, d)$ for each $\al \in (0,1]$ and $U$ is closed in $(F_{USCG} (X), H_{\rm end})$;
\\
(\romannumeral4)  $U(\al)$
is compact in $(X, d)$ for each $\al \in (0,1]$ and $U$ is closed in $(F_{USCG} (X), H_{\rm end})$.
\\
 (\romannumeral5) $U(\al)$
is relatively compact in $(X, d)$ for each $\al \in (0,r]$ and $U$ is closed in $(F^r_{USCG} (X), H_{\rm end})$.
\\
(\romannumeral6)  $U(\al)$
is compact in $(X, d)$ for each $\al \in (0,r]$ and $U$ is closed in $(F^r_{USCG} (X), H_{\rm end})$.
\end{tl}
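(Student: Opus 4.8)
The plan is to assemble the six equivalences from the machinery already developed in this section rather than proving anything from scratch. The backbone is Theorem \ref{cfegum}, which, applied directly to $U\subseteq F_{USCG}(X)$, already yields the equivalence of (\romannumeral1), (\romannumeral3) and (\romannumeral4); so those three clauses require no further argument. What remains is to attach (\romannumeral2) to this chain and to pass from the ``global'' clauses (\romannumeral3), (\romannumeral4) to their ``truncated'' counterparts (\romannumeral5), (\romannumeral6).

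Next I would link (\romannumeral3) to (\romannumeral5) and (\romannumeral4) to (\romannumeral6). Each of these is a conjunction of a cut-wise condition and a closedness condition, so I would treat the two conjuncts separately. For the cut-wise parts, Lemma \ref{pmu} supplies exactly the equivalence between ``for each $\al\in(0,1]$'' and ``for each $\al\in(0,r]$'' — clause (\romannumeral1-1)$\Leftrightarrow$(\romannumeral1-2) in the relatively compact case and clause (\romannumeral3-1)$\Leftrightarrow$(\romannumeral3-2) in the compact case — the underlying point being that $U(\al)=\emptyset$ whenever $\al>r$. For the closedness conjunct, Lemma \ref{cpu}, clause (\romannumeral2-1)$\Leftrightarrow$(\romannumeral2-2), gives that $U$ is closed in $(F_{USCG}(X),H_{\rm end})$ iff it is closed in $(F^r_{USCG}(X),H_{\rm end})$, which in turn rests on $F^r_{USCG}(X)$ being a closed subspace (Proposition \ref{ace}(\romannumeral3)). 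Combining the two conjunct-wise equivalences yields (\romannumeral3)$\Leftrightarrow$(\romannumeral5) and (\romannumeral4)$\Leftrightarrow$(\romannumeral6).

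The one step that is slightly less automatic is (\romannumeral1)$\Leftrightarrow$(\romannumeral2), which compares compactness of $U$ in the ambient space with compactness in the subspace. Here I would invoke the standard fact that in a metric space a set is compact if and only if it is both closed and relatively compact, and then read off both conjuncts from Lemma \ref{cpu}: clause (\romannumeral1-1)$\Leftrightarrow$(\romannumeral1-2) transfers relative compactness between $(F_{USCG}(X),H_{\rm end})$ and $(F^r_{USCG}(X),H_{\rm end})$, while clause (\romannumeral2-1)$\Leftrightarrow$(\romannumeral2-2) transfers closedness. (Equivalently, one may simply note that compactness of a subset is intrinsic to the subspace topology, which $F^r_{USCG}(X)$ inherits from $F_{USCG}(X)$.) With (\romannumeral1)$\Leftrightarrow$(\romannumeral2), (\romannumeral1)$\Leftrightarrow$(\romannumeral3)$\Leftrightarrow$(\romannumeral4), (\romannumeral3)$\Leftrightarrow$(\romannumeral5) and (\romannumeral4)$\Leftrightarrow$(\romannumeral6) all in hand, the six clauses collapse into a single equivalence class and the proof is complete. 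I do not expect any genuine obstacle; the only thing to watch is matching each conjunct to the correct sub-clause of Lemmas \ref{cpu} and \ref{pmu}.
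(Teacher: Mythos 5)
Your proposal is correct and follows essentially the same route as the paper: Theorem \ref{cfegum} gives (\romannumeral1)$\Leftrightarrow$(\romannumeral3)$\Leftrightarrow$(\romannumeral4), the intrinsic nature of compactness (equivalently Lemma \ref{cpu}) gives (\romannumeral1)$\Leftrightarrow$(\romannumeral2), and Lemmas \ref{cpu} and \ref{pmu} transfer the two conjuncts to obtain (\romannumeral3)$\Leftrightarrow$(\romannumeral5) and (\romannumeral4)$\Leftrightarrow$(\romannumeral6). No gaps.
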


\begin{proof}

   Clearly (\romannumeral1)$\Leftrightarrow$(\romannumeral2). From this and Theorem \ref{cfegum}, we obtain that
(\romannumeral2)$\Leftrightarrow$(\romannumeral3)$\Leftrightarrow$(\romannumeral4).

By Lemma \ref{cpu},
 $U$ is closed in $(F_{USCG} (X), H_{\rm end})$
  if and only if
   $U$ is closed in $(F^r_{USCG} (X), H_{\rm end})$.
   By Lemma \ref{pmu},
   $U(\al)$
is relatively compact in $(X, d)$ for each $\al \in (0,1]$
  if and only if $U(\al)$
is relatively compact in $(X, d)$ for each $\al \in (0,r]$.
   So (\romannumeral3)$\Leftrightarrow$(\romannumeral5).

Similarly, from Lemmas \ref{cpu} and \ref{pmu},
we have that
(\romannumeral4)$\Leftrightarrow$(\romannumeral6).

So
(\romannumeral1)$\Leftrightarrow$(\romannumeral2)$\Leftrightarrow$(\romannumeral3)$\Leftrightarrow$(\romannumeral4)$\Leftrightarrow$(\romannumeral5)$\Leftrightarrow$(\romannumeral6).

\end{proof}

\begin{re}
{\rm
  From Corollary \ref{ncfegum}, Lemmas \ref{cpu} and \ref{pmu},
  the following properties are equivalent.
     \\
(\romannumeral1) $U$ is compact in $(F_{USCG} (X), H_{\rm end})$.
\\
(\romannumeral2) $U$ is compact in $(F^r_{USCG} (X), H_{\rm end})$.
  \\
(\romannumeral3) At least one of
  (\romannumeral1-1),(\romannumeral1-2),
     (\romannumeral3-1) and (\romannumeral3-2) in Lemma \ref{pmu} holds,
 and
at least one of
     (\romannumeral2-1) and (\romannumeral2-2) in Lemma \ref{cpu} holds.
 \\
(\romannumeral4) All of
  (\romannumeral1-1),(\romannumeral1-2),
     (\romannumeral3-1) and (\romannumeral3-2) in Lemma \ref{pmu} hold,
 and
all of
     (\romannumeral2-1) and (\romannumeral2-2) in Lemma \ref{cpu} hold.
  }
\end{re}

\section{An application on relationship
between $H_{\rm end}$ metric and $\Gamma$-convergence
}

As an application of the characterizations of relative compactness,
total boundedness and compactness
given in Section \ref{cmg}, we discuss the relationship
between $H_{\rm end}$ metric and $\Gamma$-convergence
on fuzzy sets.

\begin{pp}\label{hger}
Let $S$ be a nonempty subset of $F_{USC} (X)$.
Let $u$ be a fuzzy set in $S$, and let
 $\{u_n\}$ be a fuzzy set sequence in $S$.
 Then
the following properties are equivalent.
\\
(\romannumeral1) \ $H_{\rm end} (u_n, u) \to 0$ as $n \to \infty$.
\\
(\romannumeral2) \
$\lim_{n\to\infty}^{(\Gamma)} u_n = u$,
and
 $\{u_n, n=1,2,\ldots\}$ is a relatively compact set in $(F_{USC} (X), H_{\rm end})$.
\\
(\romannumeral3) \
$\lim_{n\to\infty}^{(\Gamma)} u_n = u$,
and
 $\{u_n, n=1,2,\ldots\}$ is a relatively compact set in $(S, H_{\rm end})$.
\\
(\romannumeral4) \
$\lim_{n\to\infty}^{(\Gamma)} u_n = u$,
and
  $\{u_n, n=1,2,\ldots\} \cup \{u\}$ is a compact set in $(S, H_{\rm end})$.
\\
(\romannumeral5) \
$\lim_{n\to\infty}^{(\Gamma)} u_n = u$,
and
  $\{u_n, n=1,2,\ldots\} \cup \{u\}$ is a compact set in $(F_{USC} (X), H_{\rm end})$.
\end{pp}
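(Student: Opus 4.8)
The plan is to prove the five conditions equivalent by first isolating the essential content in $\mathrm{(i)} \Leftrightarrow \mathrm{(ii)}$, and then deriving the remaining equivalences from soft facts about (relative) compactness together with the absoluteness of compactness. The common clause $\lim_{n\to\infty}^{(\Gamma)} u_n = u$ appears in $\mathrm{(ii)}$--$\mathrm{(v)}$, so the real work is to trade metric convergence against relative compactness.

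First I would treat $\mathrm{(i)} \Rightarrow \mathrm{(ii)}$. Assuming $H_{\rm end}(u_n,u)\to 0$, Remark \ref{hmr} immediately gives $\lim_{n\to\infty}^{(\Gamma)} u_n = u$. For the relative compactness I would invoke the standard metric-space fact that a convergent sequence together with its limit is compact: $\{u_n,\ n=1,2,\ldots\}\cup\{u\}$ is compact in $(F_{USC}(X), H_{\rm end})$, so the closure of $\{u_n\}$, being a closed subset of this compact set, is compact; that is, $\{u_n\}$ is relatively compact. Since $u\in S$, this same observation delivers $\mathrm{(i)}\Rightarrow\mathrm{(v)}$, and likewise $\mathrm{(i)}\Rightarrow\mathrm{(iv)}$ and $\mathrm{(i)}\Rightarrow\mathrm{(iii)}$ at once.

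The main step is $\mathrm{(ii)} \Rightarrow \mathrm{(i)}$, and I would argue by contradiction. If $H_{\rm end}(u_n,u)\not\to 0$, there are $\varepsilon>0$ and a subsequence with $H_{\rm end}(u_{n_k},u)\geq \varepsilon$ for all $k$. By the relative compactness of $\{u_n\}$ in $(F_{USC}(X), H_{\rm end})$, I pass to a further subsequence $\{u_{n_{k_j}}\}$ that converges in $H_{\rm end}$ to some $v\in F_{USC}(X)$. Then Remark \ref{hmr} yields $\lim_{j\to\infty}^{(\Gamma)} u_{n_{k_j}} = v$, i.e. ${\rm end}\,v = \lim_{j\to\infty}^{(K)}{\rm end}\,u_{n_{k_j}}$, while Remark \ref{gby} (a subsequence of a $\Gamma$-convergent sequence shares its $\Gamma$-limit) gives ${\rm end}\,u = \lim_{j\to\infty}^{(K)}{\rm end}\,u_{n_{k_j}}$. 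Because the Kuratowski limit of a fixed sequence of sets is unique (it is forced to equal both $\liminf$ and $\limsup$), it follows that ${\rm end}\,u = {\rm end}\,v$, hence $u=v$. But then $H_{\rm end}(u_{n_{k_j}}, u)\to 0$, contradicting $H_{\rm end}(u_{n_{k_j}},u)\geq \varepsilon$. This identification of the subsequential limit $v$ with $u$ via uniqueness of the Kuratowski/$\Gamma$-limit is the one step requiring genuine care; everything else is routine point-set topology.

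Finally I would close the remaining equivalences with soft arguments. Compactness of a subset is absolute, so $\{u_n,\ n=1,2,\ldots\}\cup\{u\}$, which lies in $S$ since $u\in S$, is compact in $(S, H_{\rm end})$ if and only if it is compact in $(F_{USC}(X), H_{\rm end})$; this gives $\mathrm{(iv)} \Leftrightarrow \mathrm{(v)}$. For $\mathrm{(v)}\Rightarrow\mathrm{(ii)}$ I note that a subset of a compact set is relatively compact. For $\mathrm{(iii)}\Rightarrow\mathrm{(ii)}$ I use that relative compactness in the subspace $(S, H_{\rm end})$ implies relative compactness in the larger space $(F_{USC}(X), H_{\rm end})$, since any subsequence converging to a point of $S$ converges to a point of $F_{USC}(X)$. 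Combined with $\mathrm{(i)}\Rightarrow\mathrm{(iii)},\mathrm{(iv)},\mathrm{(v)}$ from the second paragraph and $\mathrm{(i)}\Leftrightarrow\mathrm{(ii)}$, these implications close the loop and establish $\mathrm{(i)}\Leftrightarrow\mathrm{(ii)}\Leftrightarrow\mathrm{(iii)}\Leftrightarrow\mathrm{(iv)}\Leftrightarrow\mathrm{(v)}$.
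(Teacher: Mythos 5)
Your proposal is correct and follows essentially the same route as the paper: the substantive step $\mathrm{(ii)}\Rightarrow\mathrm{(i)}$ is the identical contradiction argument (extract a subsequence bounded away from $u$, use relative compactness to get an $H_{\rm end}$-limit $v$, and identify $v=u$ via Remark \ref{gby} and uniqueness of the Kuratowski limit), while the remaining implications are the same soft observations about compactness being absolute and subsets of compact sets being relatively compact. The only difference is the order in which the cycle of implications is traversed, which is immaterial.
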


\begin{proof}

To show (\romannumeral1)$\Rightarrow$(\romannumeral5).
Assume that (\romannumeral1) is true. By Theorem \ref{hkg} and Remark \ref{hmr}, $\lim_{n\to\infty}^{(\Gamma)} u_n = u$.
Clearly $\{u_n, n=1,2,\ldots\} \cup \{u\}$ is a compact set in
$(F_{USC} (X), H_{\rm end})$.
So
(\romannumeral5) is true.

It can be seen that $\{u_n, n=1,2,\ldots\} \cup \{u\}$ is a compact set in
$(F_{USC} (X), H_{\rm end})$
if and only if
$\{u_n, n=1,2,\ldots\} \cup \{u\}$ is a compact set in
$(S, H_{\rm end})$.
So (\romannumeral5)$\Leftrightarrow$(\romannumeral4).

If $\{u_n, n=1,2,\ldots\} \cup \{u\}$ is a compact set in $(S, H_{\rm end})$,
then
$\{u_n, n=1,2,\ldots\}$ is relatively compact in $(S, H_{\rm end})$
because
$\{u_n, n=1,2,\ldots\} $ is a subset of $\{u_n, n=1,2,\ldots\} \cup \{u\}$.
So
 (\romannumeral4)$\Rightarrow$(\romannumeral3).

Clearly if $\{u_n, n=1,2,\ldots\}$ is a relatively compact set in $(S, H_{\rm end})$,
then $\{u_n, n=1,2,\ldots\}$ is a relatively compact set in $(F_{USC}(X), H_{\rm end})$.
So
 (\romannumeral3)$\Rightarrow$(\romannumeral2).

To show (\romannumeral2)$\Rightarrow$(\romannumeral1), we proceed by contradiction.
Assume
that (\romannumeral2) is true. If (\romannumeral1) is not true;
that is, $H_{\rm end} (u_n, u) \not\to 0$.
Then there is an $\varepsilon>0$ and a subsequence
$\{v_n^{(1)}\}$ of $\{u_n\}$
that
\begin{equation}\label{scre}
  H_{\rm end} (v_n^{(1)}, u) \geq \varepsilon \mbox{ for all } n=1,2,\ldots.
\end{equation}
Since
 $\{u_n, n=1,2,\ldots\}$ is relatively compact in $(F_{USC} (X), H_{\rm end})$,
there is a subsequence $\{v_n^{(2)}\}$ of
$\{v_n^{(1)}\}$ and $v\in F_{USC} (X)$
such that $H_{\rm end} (v_n^{(2)}, v) \to 0$.
Hence by Theorem \ref{hkg} and Remark \ref{hmr},
$\lim_{n\to\infty}^{(\Gamma)} v_n^{(2)} = v$.
Since $\lim_{n\to\infty}^{(\Gamma)} u_n = u$, then by Remark \ref{gby},
$u=v$.
So $H_{\rm end} (v_n^{(2)}, u) \to 0$,
which contradicts \eqref{scre}.

Since we have shown (\romannumeral1)$\Rightarrow$(\romannumeral5),
(\romannumeral5)$\Leftrightarrow$(\romannumeral4),
(\romannumeral4)$\Rightarrow$(\romannumeral3),
(\romannumeral3)$\Rightarrow$(\romannumeral2)
and (\romannumeral2)$\Rightarrow$(\romannumeral1),
the proof is complete.

We can also show this theorem as follows.
First we show that
 (\romannumeral1)$\Leftrightarrow$(\romannumeral3)
$\Leftrightarrow$(\romannumeral4) by verifying that
 (\romannumeral1)$\Rightarrow$(\romannumeral4)
$\Rightarrow$(\romannumeral3)$\Rightarrow$(\romannumeral1)
(The proof of (\romannumeral1)$\Rightarrow$(\romannumeral4)
is similar to that of
(\romannumeral1)$\Rightarrow$(\romannumeral5).
The proof of (\romannumeral3)$\Rightarrow$(\romannumeral1)
is similar to that of
(\romannumeral2)$\Rightarrow$(\romannumeral1)).
Then put $S=F_{USC}(X)$, we obtain that
(\romannumeral1)$\Leftrightarrow$(\romannumeral2)
from
(\romannumeral1)$\Leftrightarrow$(\romannumeral3),
and that
(\romannumeral1)$\Leftrightarrow$(\romannumeral5)
from
(\romannumeral1)$\Leftrightarrow$(\romannumeral4).
So we have that
(\romannumeral1), (\romannumeral2),
(\romannumeral3), (\romannumeral4) and (\romannumeral5)
are equivalent to each other.

\end{proof}

\begin{pp}\label{hge}
Let $u$ be a fuzzy set in $F_{USCG} (X)$, and let
 $\{u_n\}$ be a fuzzy set sequence in $F_{USCG} (X)$.
 Then
the following properties are equivalent.
\\
(\romannumeral1) \ $H_{\rm end} (u_n, u) \to 0$ as $n \to \infty$.
\\
(\romannumeral2) \
$\lim_{n\to\infty}^{(\Gamma)} u_n = u$,
and
 for each $\al \in (0,1]$, $\bigcup_{n=1}^{+\infty} [u_n]_\al$ is relatively compact in $(X,d)$.
\\
(\romannumeral3) \
$\lim_{n\to\infty}^{(\Gamma)} u_n = u$,
and
 for each $\al \in (0,1]$,
$\bigcup_{n=1}^{+\infty} [u_n]_\al \cup [u]_\al $ is compact in $(X,d)$.
\\
(\romannumeral4) \
$\lim_{n\to\infty}^{(\Gamma)} u_n = u$,
$\{u_n, n=1,2,\ldots\} \cup \{u\}$ is closed in $(F_{USCG} (X), H_{\rm end})$,
and
 for each $\al \in (0,1]$,
$\bigcup_{n=1}^{+\infty} [u_n]_\al \cup [u]_\al $ is compact in $(X,d)$.
\end{pp}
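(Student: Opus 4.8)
The plan is to reduce everything to the general criterion in Proposition \ref{hger} applied with $S=F_{USCG}(X)$, and then to translate the abstract compactness statements appearing there into statements about unions of $\alpha$-cuts by invoking the characterizations from Section \ref{cmg}. Note first that $\{u_n,\ n=1,2,\ldots\}(\alpha)=\bigcup_{n=1}^{+\infty}[u_n]_\alpha$ and $(\{u_n,\ n=1,2,\ldots\}\cup\{u\})(\alpha)=\bigcup_{n=1}^{+\infty}[u_n]_\alpha\cup[u]_\alpha$, so that the cut conditions in (\romannumeral2)--(\romannumeral4) are exactly the conditions on $U(\alpha)$ that occur in Theorems \ref{rcfegnum} and \ref{cfegum} for the relevant set $U$. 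I would then prove the equivalences by running the cycle (\romannumeral1)$\Rightarrow$(\romannumeral4)$\Rightarrow$(\romannumeral3)$\Rightarrow$(\romannumeral2)$\Rightarrow$(\romannumeral1).

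For (\romannumeral1)$\Rightarrow$(\romannumeral4): assuming $H_{\rm end}(u_n,u)\to0$, Theorem \ref{hkg} together with Remark \ref{hmr} gives $\lim_{n\to\infty}^{(\Gamma)}u_n=u$. Since $H_{\rm end}$ is a genuine (finite) metric on $F_{USCG}(X)$, the set $\{u_n,\ n=1,2,\ldots\}\cup\{u\}$, being a convergent sequence together with its limit, is compact and hence closed in $(F_{USCG}(X),H_{\rm end})$. Applying the equivalence (\romannumeral1)$\Leftrightarrow$(\romannumeral3) of Theorem \ref{cfegum} to this compact set then yields that $\bigcup_{n=1}^{+\infty}[u_n]_\alpha\cup[u]_\alpha$ is compact in $(X,d)$ for each $\alpha\in(0,1]$. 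This establishes (\romannumeral4).

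The step (\romannumeral4)$\Rightarrow$(\romannumeral3) is immediate by dropping the closedness clause. For (\romannumeral3)$\Rightarrow$(\romannumeral2) I would only note that $\bigcup_{n=1}^{+\infty}[u_n]_\alpha$ is a subset of the compact set $\bigcup_{n=1}^{+\infty}[u_n]_\alpha\cup[u]_\alpha$, and is therefore relatively compact in $(X,d)$. Finally, for (\romannumeral2)$\Rightarrow$(\romannumeral1), Theorem \ref{rcfegnum} turns the condition ``$\bigcup_{n=1}^{+\infty}[u_n]_\alpha$ is relatively compact in $(X,d)$ for each $\alpha\in(0,1]$'' into ``$\{u_n,\ n=1,2,\ldots\}$ is relatively compact in $(F_{USCG}(X),H_{\rm end})$''. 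Hence (\romannumeral2) is precisely clause (\romannumeral3) of Proposition \ref{hger} taken with $S=F_{USCG}(X)$, and the equivalence (\romannumeral1)$\Leftrightarrow$(\romannumeral3) of that proposition delivers $H_{\rm end}(u_n,u)\to0$.

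I expect no serious obstacle here, since the argument is essentially bookkeeping that channels the earlier results through Proposition \ref{hger}. The one point requiring care is the role of the closedness clause that distinguishes (\romannumeral3) from (\romannumeral4): it is not verified directly for (\romannumeral3), but is recovered automatically along the cycle, because $H_{\rm end}$-convergence forces $\{u_n,\ n=1,2,\ldots\}\cup\{u\}$ to be a compact, hence closed, set. One should also confirm that Proposition \ref{hger} applies legitimately with $S=F_{USCG}(X)$ (it does, as $F_{USCG}(X)$ is a nonempty subset of $F_{USC}(X)$ containing $u$ and every $u_n$), and that the relative compactness and compactness used in the subspace $(F_{USCG}(X),H_{\rm end})$ are exactly those governed by Theorems \ref{rcfegnum} and \ref{cfegum}.
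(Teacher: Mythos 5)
Your proposal is correct and follows essentially the same route as the paper: both reduce the statement to Proposition \ref{hger} with $S=F_{USCG}(X)$ and then translate the relative compactness and compactness clauses there into conditions on $\bigcup_{n=1}^{+\infty}[u_n]_\al$ and $\bigcup_{n=1}^{+\infty}[u_n]_\al\cup[u]_\al$ via Theorems \ref{rcfegnum} and \ref{cfegum}. The only cosmetic difference is that you arrange the implications as a cycle (\romannumeral1)$\Rightarrow$(\romannumeral4)$\Rightarrow$(\romannumeral3)$\Rightarrow$(\romannumeral2)$\Rightarrow$(\romannumeral1), while the paper first records the three equivalent forms from Proposition \ref{hger} and then matches them to (\romannumeral1), (\romannumeral2) and (\romannumeral4), noting (\romannumeral4)$\Rightarrow$(\romannumeral3)$\Rightarrow$(\romannumeral2).
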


\begin{proof}
The desired result follows from Proposition \ref{hger}, Theorem \ref{rcfegnum} and Theorem \ref{cfegum}.
The proof is routine.

Put $S=F_{USCG}(X)$ in Proposition \ref{hger}.
Then we obtain that the following conditions (a), (b) and (c) are equivalent.
\\
(a) $H_{\rm end} (u_n, u) \to 0$.
\\
(b)
$\lim_{n\to\infty}^{(\Gamma)} u_n = u$,
and
$\{u_n, n=1,2,\ldots\}$ is a relatively compact set in $(F_{USCG} (X), H_{\rm end})$.
\\
(c) $\lim_{n\to\infty}^{(\Gamma)} u_n = u$,
and
   $\{u_n, n=1,2,\ldots\} \cup \{u\}$ is a compact set in $(F_{USCG} (X), H_{\rm end})$.

(a) is (\romannumeral1). By Theorem \ref{rcfegnum}, (b)$\Leftrightarrow$(\romannumeral2).
By Theorem \ref{cfegum}, (c)$\Leftrightarrow$(\romannumeral4).
We can see that
(\romannumeral4)$\Rightarrow$(\romannumeral3)
$\Rightarrow$(\romannumeral2).
So from
(a)$\Leftrightarrow$(b)$\Leftrightarrow$(c),
we have that
(\romannumeral1)$\Leftrightarrow$(\romannumeral2)$\Leftrightarrow$
(\romannumeral3)$\Leftrightarrow$(\romannumeral4).

\end{proof}

\begin{pp} \label{csm}
The following statements are equivalent:
\\
 (\rmn1) $X$ is compact.
\\
(\rmn2) Let $\{u_n\}$ be a sequence in $F_{USC}(X)$ and $u\in F_{USC}(X)$.
Then $H_{\rm end} (u_n, u) \to 0$ as $n \to \infty$ if and only if $\lim_{n\to\infty}^{(\Gamma)} u_n = u$.
\\
(\rmn3) Let $\{u_n\}$ be a sequence in $F_{USCG}(X)$ and $u\in F_{USCG}(X)$.
Then $H_{\rm end} (u_n, u) \to 0$ as $n \to \infty$ if and only if $\lim_{n\to\infty}^{(\Gamma)} u_n = u$.
\\
(\rmn4)  Let $\{u_n\}$ be a sequence in $F_{USCB}(X)$ and $u\in F_{USCB}(X)$.
Then $H_{\rm end} (u_n, u) \to 0$ as $n \to \infty$ if and only if $\lim_{n\to\infty}^{(\Gamma)} u_n = u$.
 \end{pp}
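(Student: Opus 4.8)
The plan is to exploit the fact, recorded in Remark \ref{hmr}, that $H_{\rm end}(u_n,u)\to 0$ always forces $\lim_{n\to\infty}^{(\Gamma)}u_n=u$ on any of the three spaces involved. Thus in each of (\rmn2), (\rmn3) and (\rmn4) the ``only if'' half is automatic, and the genuine content is the converse implication ``$\Gamma$-convergence $\Rightarrow$ $H_{\rm end}$-convergence.'' I would establish the equivalence through the cycle (\rmn1)$\Rightarrow$(\rmn2)$\Rightarrow$(\rmn3)$\Rightarrow$(\rmn4)$\Rightarrow$(\rmn1).

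For (\rmn1)$\Rightarrow$(\rmn2), suppose $X$ is compact. Then every closed subset of $X$ is compact, so every $\al$-cut with $\al\in(0,1]$ of a member of $F_{USC}(X)$ lies in $K(X)\cup\{\emptyset\}$; hence $F_{USC}(X)=F_{USCG}(X)=F_{USCB}(X)$, as already observed in Section \ref{bpxe}. Now let $\{u_n\}$ and $u$ lie in $F_{USC}(X)=F_{USCG}(X)$ with $\lim_{n\to\infty}^{(\Gamma)}u_n=u$. Because $X$ is compact, for each $\al\in(0,1]$ the set $\bigcup_{n}[u_n]_\al\subseteq X$ is relatively compact in $(X,d)$. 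Thus the hypotheses of condition (\rmn2) of Proposition \ref{hge} are met, and the equivalence there yields $H_{\rm end}(u_n,u)\to 0$. Together with the automatic ``only if'' direction this gives (\rmn2). The implications (\rmn2)$\Rightarrow$(\rmn3)$\Rightarrow$(\rmn4) are then immediate: since $F_{USCB}(X)\subseteq F_{USCG}(X)\subseteq F_{USC}(X)$, any sequence and limit satisfying the hypotheses of the statement for the larger space a fortiori lie in the larger space, so the equivalence asserted there restricts verbatim to the smaller space.

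The decisive step is (\rmn4)$\Rightarrow$(\rmn1), which I would prove by contraposition: if $X$ is not compact I must produce a sequence in $F_{USCB}(X)$ that $\Gamma$-converges to a limit in $F_{USCB}(X)$ but fails to converge in $H_{\rm end}$. Since $X$ is not compact, choose a sequence $\{x_n\}$ in $X$ with no convergent subsequence and set $u_n:=\widehat{x_n}\in F_{USCB}(X)$. I would then compute the Kuratowski limits of $\{{\rm end}\,\widehat{x_n}\}$: no point $(y,t)$ with $t>0$ can belong to $\limsup_{n\to\infty}{\rm end}\,\widehat{x_n}$, since such a point would force a subsequence $x_{n_j}\to y$, contradicting the choice of $\{x_n\}$; whereas $X\times\{0\}\subseteq\liminf_{n\to\infty}{\rm end}\,\widehat{x_n}$. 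Hence $\lim_{n\to\infty}^{(K)}{\rm end}\,\widehat{x_n}=X\times\{0\}={\rm end}\,\emptyset_{F(X)}$, that is $\lim_{n\to\infty}^{(\Gamma)}\widehat{x_n}=\emptyset_{F(X)}$, and $\emptyset_{F(X)}\in F_{USCB}(X)$. On the other hand $H_{\rm end}(\widehat{x_n},\emptyset_{F(X)})=S_{\widehat{x_n}}=1\not\to 0$. This sequence witnesses the failure of the ``$\Gamma\Rightarrow H_{\rm end}$'' half of (\rmn4), completing the contrapositive.

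The only real obstacle is the construction in (\rmn4)$\Rightarrow$(\rmn1): one must recognise that the natural test objects are the height-one singletons $\widehat{x_n}$ attached to a sequence with no convergent subsequence, and then verify carefully that their endographs Kuratowski-converge down to $X\times\{0\}$ while their $H_{\rm end}$-distance to $\emptyset_{F(X)}$ stays equal to $1$. Every other step is either the already-proved Proposition \ref{hge} or a routine subspace inclusion.
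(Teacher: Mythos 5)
Your proposal is correct and follows essentially the same route as the paper: the same cycle (\rmn1)$\Rightarrow$(\rmn2)$\Rightarrow$(\rmn3)$\Rightarrow$(\rmn4)$\Rightarrow$(\rmn1), the same appeal to Proposition \ref{hge} after noting $F_{USC}(X)=F_{USCB}(X)$ when $X$ is compact, and a contrapositive for (\rmn4)$\Rightarrow$(\rmn1) built on a sequence with no convergent subsequence. The only (immaterial) difference is the witness in that last step: you take the singletons $\widehat{x_n}$, which $\Gamma$-converge to $\emptyset_{F(X)}$ while $H_{\rm end}(\widehat{x_n},\emptyset_{F(X)})=1$, whereas the paper takes $u_n=\chi_{\{x_1\}\cup\{x_n\}}$, which $\Gamma$-converges to $\chi_{\{x_1\}}$ while $H_{\rm end}(u_n,u_1)=\min\{1,d(x_n,x_1)\}\not\to 0$; both witnesses are valid.
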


\begin{proof}
 Assume that (\rmn1) is true. Suppose that $\{u_n\}$ is a sequence in $F_{USC}(X)$ and $u\in F_{USC}(X)$. Then $u\in F_{USCB}(X)$ and $\{u_n\}\subseteq F_{USCB}(X)$. Clearly for each $\al \in (0,1]$, $\bigcup_{n=1}^{+\infty} [u_n]_\al$ is relatively compact in $(X,d)$.
Thus by Proposition \ref{hge}, $H_{\rm end} (u_n, u) \to 0$ as $n \to \infty$ if and only if $\lim_{n\to\infty}^{(\Gamma)} u_n = u$.
So (\rmn2) is true. Thus (\rmn1)$\Rightarrow$(\rmn2).
Clearly (\rmn2)$\Rightarrow$(\rmn3)$\Rightarrow$(\rmn4).
To complete the proof, we only need to show
(\rmn4)$\Rightarrow$(\rmn1).

Assume that (\rmn1) is not true.
Then there is a sequence $\{x_n\}$ in $X$ which has no convergent subsequence in $X$.
For $n=1,2,\ldots$, let $S_n=\{x_1\}\cup \{x_n\}$ and
$u_n=\chi_{S_n}$.
Then $\{u_n\} \subseteq F_{USCB}(X)$ and $\lim_{n\to\infty}^{(\Gamma)} u_n = u_1$.

For each $n\in \mathbb{N}$, by \eqref{efn},
$H_{\rm end} (u_n, u_1)=\min\{1, H(S_n, S_1)\} =\min\{1, d(x_n, x_1)\}$.
Obviously $d(x_n, x_1)\not\to 0$.
This means that
$H_{\rm end} (u_n, u_1)\not\to 0$. So (\rmn4) is not true.
Thus (\rmn4)$\Rightarrow$(\rmn1).
This completes the proof.

\end{proof}

\begin{pp} \label{csme}
The following statements are equivalent:
\\
 (\rmn1) $X$ is compact.
\\
(\rmn2) Let $\{C_n\}$ be a sequence in $C(X)\cup\{\emptyset\}$ and $C\in C(X)\cup\{\emptyset\}$.
Then $H (C_n, C) \to 0$ as $n \to \infty$ if and only if $\lim_{n\to\infty}^{(K)} C_n = C$.
\\
(\rmn3)  Let $\{C_n\}$ be a sequence in $K(X)$ and $C\in K(X)$.
Then $H  (C_n, C) \to 0$ as $n \to \infty$ if and only if $\lim_{n\to\infty}^{(K)} C_n = C$.
 \end{pp}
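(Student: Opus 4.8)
The plan is to establish the cycle (\rmn1)$\Rightarrow$(\rmn2)$\Rightarrow$(\rmn3)$\Rightarrow$(\rmn1). Throughout, I would first observe that the implication ``$H(C_n,C)\to 0 \Rightarrow \lim_{n\to\infty}^{(K)}C_n=C$'' is always valid: for sets in $C(X)$ it is exactly Theorem \ref{hkg}, and for the degenerate cases the convention $H(M,\emptyset)=+\infty$ for $M\in C(X)$ forces $C_n=\emptyset$ eventually when $C=\emptyset$, and $C_n\neq\emptyset$ eventually when $C\neq\emptyset$, after which Theorem \ref{hkg} applies to the tail. Hence the substantive content of (\rmn2) and (\rmn3) is the reverse implication ``$\lim_{n\to\infty}^{(K)}C_n=C \Rightarrow H(C_n,C)\to 0$'', and the whole proposition amounts to showing that this reverse implication holds exactly when $X$ is compact.

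For (\rmn1)$\Rightarrow$(\rmn2), assume $X$ is compact, so $C(X)=K(X)$, and suppose $\lim_{n\to\infty}^{(K)}C_n=C$. If $C=\emptyset$, then $\limsup_{n\to\infty}C_n=\emptyset$; were $C_n\neq\emptyset$ for infinitely many $n$, compactness of $X$ would let me extract a convergent sequence of points $x_{n_k}\in C_{n_k}$ whose limit lies in $\limsup_{n\to\infty}C_n$, a contradiction, so $C_n=\emptyset$ eventually and $H(C_n,\emptyset)\to 0$. If $C\neq\emptyset$, then since $C=\liminf_{n\to\infty}C_n$ I may assume every $C_n\neq\emptyset$, and I show $H^*(C_n,C)\to 0$ and $H^*(C,C_n)\to 0$ separately. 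For the first, if some $y_{n_k}\in C_{n_k}$ satisfied $d(y_{n_k},C)\geq\varepsilon$, compactness would give a subsequential limit $y\in\limsup_{n\to\infty}C_n=C$ with $d(y,C)\geq\varepsilon$, which is impossible. For the second, if some $x_{n_k}\in C$ satisfied $d(x_{n_k},C_{n_k})\geq\varepsilon$, then compactness of $C$ gives $x_{n_k}\to x\in C=\liminf_{n\to\infty}C_n$, so there exist $z_n\in C_n$ with $z_n\to x$, whence $d(x_{n_k},C_{n_k})\leq d(x_{n_k},z_{n_k})\to 0$, again impossible.

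The implication (\rmn2)$\Rightarrow$(\rmn3) is immediate, since a sequence in $K(X)$ with a limit in $K(X)$ is a special case of a sequence in $C(X)\cup\{\emptyset\}$, so the biconditional in (\rmn2) specializes to that in (\rmn3). For (\rmn3)$\Rightarrow$(\rmn1) I argue contrapositively, mirroring the proof of Proposition \ref{csm}: if $X$ is not compact, choose a sequence $\{x_n\}$ in $X$ with no convergent subsequence and set $S_n:=\{x_1\}\cup\{x_n\}\in K(X)$. A short check shows $\liminf_{n\to\infty}S_n=\limsup_{n\to\infty}S_n=\{x_1\}$ (any point other than $x_1$ would produce a convergent subsequence of $\{x_n\}$), so $\lim_{n\to\infty}^{(K)}S_n=\{x_1\}=S_1\in K(X)$, while $H(S_n,S_1)=d(x_n,x_1)\not\to 0$ because $x_n\to x_1$ would give a convergent subsequence of $\{x_n\}$. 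This exhibits Kuratowski convergence without Hausdorff convergence, so (\rmn3) fails.

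The main obstacle is the reverse implication in the compact case, specifically the control of $H^*(C,C_n)$: unlike $H^*(C_n,C)$, it requires producing, for each point of the limit $C$, nearby points of the approximating sets $C_n$, which is exactly where the identity $C=\liminf_{n\to\infty}C_n$ and the compactness of $C$ must be combined. An alternative route would reduce everything to Proposition \ref{csm} via the correspondence $A\mapsto\chi_A$: by \eqref{efn} one has $H(C_n,C)\to 0 \Leftrightarrow H_{\rm end}(\chi_{C_n},\chi_C)\to 0$, and a level-wise computation of endographs gives $\lim_{n\to\infty}^{(K)}C_n=C \Leftrightarrow \lim_{n\to\infty}^{(\Gamma)}\chi_{C_n}=\chi_C$; however the edge cases $C_n=\emptyset$ and $C=\emptyset$ still have to be treated by hand, so the direct argument above seems preferable.
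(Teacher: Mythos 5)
Your proof is correct, and for the substantive implication it takes a genuinely different route from the paper. The paper disposes of (\rmn1)$\Rightarrow$(\rmn2) by transporting everything to fuzzy sets: it identifies $A$ with $\chi_A$, invokes Proposition \ref{bce}(\rmn1)(\rmn2) to translate $H$-convergence into $H_{\rm end}$-convergence and Kuratowski convergence into $\Gamma$-convergence, and then quotes Proposition \ref{csm} — which itself rests on Proposition \ref{hge} and hence on the compactness characterizations of Section \ref{cmg}. You instead prove (\rmn1)$\Rightarrow$(\rmn2) directly at the level of sets, splitting $H$ into the two semi-distances and using sequential compactness of $X$ (for $H^*(C_n,C)$ via $\limsup C_n\subseteq C$, and for $H^*(C,C_n)$ via compactness of $C$ together with $C\subseteq\liminf C_n$), with the empty-set cases handled separately through the convention $H(M,\emptyset)=+\infty$. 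Your argument is more elementary and self-contained, and it is actually a little more careful than the paper's on the degenerate cases, since Proposition \ref{bce}(\rmn1) and \eqref{efn} are stated only for sets in $C(X)$ and the extension to $\emptyset$ is left implicit there; the paper's route buys brevity given the machinery it has already built. Your (\rmn2)$\Rightarrow$(\rmn3) and the contrapositive (\rmn3)$\Rightarrow$(\rmn1) with $S_n=\{x_1\}\cup\{x_n\}$ coincide with the paper's proof.
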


\begin{proof}
  Note that for a set $A\in X$, $A\in C(X)\cup \emptyset$
if and only if $\chi_A\in F_{USC}(X)$.
Thus by Proposition\ref{bce}(\rmn1)(\rmn2) and Proposition \ref{csm},
(\rmn1)$\Rightarrow$(\rmn2).
Clearly (\rmn2)$\Rightarrow$(\rmn3).
To complete the proof, we only need to show that (\rmn3)$\Rightarrow$(\rmn1).

Assume that (\rmn1) is not true.
Then there is a sequence $\{x_n\}$ in $X$ which has no convergent subsequence in $X$.
For $n=1,2,\ldots$, let $S_n=\{x_1\}\cup \{x_n\}$.
Then $\{S_n\} \subseteq K(X)$ and $\lim_{n\to\infty}^{(K)} S_n = S_1$.
For each $n\in \mathbb{N}$,
$H(S_n, S_1) = d(x_n, x_1)$.
Obviously $d(x_n, x_1)\not\to 0$. This means
 $H(S_n, S_1)\not\to 0$. So (\rmn3) is not true.
Thus (\rmn3)$\Rightarrow$(\rmn1).
This completes the proof.

\end{proof}

\begin{pp}\label{hgermy}
Let $\mathcal{D}$ be a nonempty subset of $C(X)$.
Let $A$ be a set in $\mathcal{D}$, and let
 $\{A_n\}$ be a sequence of sets in $\mathcal{D}$.
 Then
the following properties are equivalent.
\\
(\romannumeral1) \ $H (A_n, A) \to 0$ as $n \to \infty$.
\\
(\romannumeral2) \
$\lim_{n\to\infty}^{(K)} A_n = A$,
and
 $\{A_n, n=1,2,\ldots\}$ is a relatively compact set in $(C(X), H)$.
\\
(\romannumeral3) \
$\lim_{n\to\infty}^{(K)} A_n = A$,
and
 $\{A_n, n=1,2,\ldots\}$ is a relatively compact set in $(\mathcal{D}, H)$.
\\
(\romannumeral4) \
$\lim_{n\to\infty}^{(K)} A_n = A$,
and
  $\{A_n, n=1,2,\ldots\} \cup \{A\}$ is a compact set in $(\mathcal{D}, H)$.
\\
(\romannumeral5) \
$\lim_{n\to\infty}^{(K)} A_n = A$,
and
  $\{A_n, n=1,2,\ldots\} \cup \{A\}$ is a compact set in $(C(X), H)$.
\end{pp}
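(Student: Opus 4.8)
The plan is to mirror the proof of Proposition \ref{hger} essentially verbatim, under the dictionary $F_{USC}(X)\leftrightarrow C(X)$, $H_{\rm end}\leftrightarrow H$, and $\Gamma$-convergence $\leftrightarrow$ Kuratowski convergence. The pair (Theorem \ref{hkg}, Remark \ref{hmr}) collapses here to Theorem \ref{hkg} alone, since $H(C_n,C)\to 0$ already yields $\lim^{(K)}_{n\to\infty}C_n=C$ directly; and Remark \ref{gby} is replaced by the elementary subsequence stability of Kuratowski convergence. I would establish the cycle (\romannumeral1)$\Rightarrow$(\romannumeral5)$\Leftrightarrow$(\romannumeral4)$\Rightarrow$(\romannumeral3)$\Rightarrow$(\romannumeral2)$\Rightarrow$(\romannumeral1).

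For (\romannumeral1)$\Rightarrow$(\romannumeral5): Theorem \ref{hkg} turns $H(A_n,A)\to 0$ into $\lim^{(K)}_{n\to\infty}A_n=A$, and a convergent sequence together with its limit is compact in $(C(X),H)$, so $\{A_n,\ n=1,2,\ldots\}\cup\{A\}$ is compact. The equivalence (\romannumeral5)$\Leftrightarrow$(\romannumeral4) is just that compactness of a subset is intrinsic and does not depend on whether the subset is viewed inside $(\mathcal{D},H)$ or inside $(C(X),H)$. Then (\romannumeral4)$\Rightarrow$(\romannumeral3) because a subset of a compact set is relatively compact, and (\romannumeral3)$\Rightarrow$(\romannumeral2) because relative compactness in the subspace $(\mathcal{D},H)$ implies relative compactness in the ambient $(C(X),H)$.

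The crux is (\romannumeral2)$\Rightarrow$(\romannumeral1), which I would argue by contradiction exactly as in Proposition \ref{hger}. Suppose $H(A_n,A)\not\to 0$; pick $\varepsilon>0$ and a subsequence $\{B_n\}$ of $\{A_n\}$ with $H(B_n,A)\geq\varepsilon$ for all $n$. Relative compactness of $\{A_n\}$ in $(C(X),H)$ yields a further subsequence $\{B_{n_k}\}$ and some $B\in C(X)$ with $H(B_{n_k},B)\to 0$; Theorem \ref{hkg} then gives $\lim^{(K)}_{k\to\infty}B_{n_k}=B$. Here I invoke the analog of Remark \ref{gby}: for any sequence $\{C_n\}$ and any subsequence $\{D_n\}$ one has $\liminf_{n\to\infty} C_n\subseteq\liminf_{n\to\infty} D_n\subseteq\limsup_{n\to\infty} D_n\subseteq\limsup_{n\to\infty} C_n$, so that $\lim^{(K)}_{n\to\infty}A_n=A$ forces $\lim^{(K)}_{k\to\infty}B_{n_k}=A$ as well. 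Since the Kuratowski limit is unique when it exists, $A=B$, whence $H(B_{n_k},A)\to 0$, contradicting $H(B_{n_k},A)\geq\varepsilon$.

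I expect the only point requiring care to be this subsequence stability, since Remark \ref{gby} is phrased only for endographs; but the same inclusion chain holds for arbitrary sequences of subsets of a metric space straight from the definitions of $\liminf$ and $\limsup$, so no genuine obstacle arises. I would also note that although $H$ on $C(X)$ is in general only an extended metric, relative compactness and compactness retain their usual meaning (every sequence admits a subsequence converging within the space), so all the metric-space reasoning above transfers without change.
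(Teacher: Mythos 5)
Your proposal is correct and matches the paper's intent exactly: the paper's entire proof of this proposition is the single line ``The proof is similar to that of Proposition \ref{hger}'', and you carry out precisely that translation, reproducing the cycle (\romannumeral1)$\Rightarrow$(\romannumeral5)$\Leftrightarrow$(\romannumeral4)$\Rightarrow$(\romannumeral3)$\Rightarrow$(\romannumeral2)$\Rightarrow$(\romannumeral1) with Theorem \ref{hkg} and the subsequence-stability of Kuratowski limits playing the roles of Remark \ref{hmr} and Remark \ref{gby}. Your two cautionary observations (that the inclusion chain of Remark \ref{gby} holds for arbitrary set sequences, and that $H$ on $C(X)$ being only an extended metric does not disturb the sequential characterizations) are exactly the points one needs to check, and both are handled correctly.
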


\begin{proof}
  The proof is similar to that of Proposition \ref{hger}.

\end{proof}

\begin{pp}\label{hgermyc}
Let $A$ be a set in $K(X)$, and let
 $\{A_n\}$ be a sequence of sets in $K(X)$.
 Then
the following properties are equivalent.
\\
(\romannumeral1) \ $H (A_n, A) \to 0$ as $n \to \infty$.
\\
(\romannumeral2) \
$\lim_{n\to\infty}^{(K)} A_n = A$,
and
 $\bigcup_{n=1}^{+\infty} A_n$ is a relatively compact set in $(X, d)$.
\\
(\romannumeral3) \
$\lim_{n\to\infty}^{(K)} A_n = A$,
and
  $\bigcup_{n=1}^{+\infty} A_n \cup A$ is a compact set in $(X, d)$.
  \\
(\romannumeral4) \
$\lim_{n\to\infty}^{(K)} A_n = A$,
  $\bigcup_{n=1}^{+\infty} A_n \cup A$ is a compact set in $(X, d)$,
    and
  $\{A_n, n=1,2,\ldots\} \cup \{A\}$ is a closed set in $(K(X), H)$.
\end{pp}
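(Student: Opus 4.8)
The plan is to deduce Proposition~\ref{hgermyc} from Proposition~\ref{hgermy} together with the characterizations of relative compactness and compactness in $(K(X),H)$ provided by Theorems~\ref{rce} and~\ref{come}, exactly in the spirit of the proof of Proposition~\ref{hge}. Since $X$ is nonempty, $K(X)$ is a nonempty subset of $C(X)$, so Proposition~\ref{hgermy} applies with $\mathcal{D}=K(X)$. Taking its clauses (\romannumeral1), (\romannumeral3) and (\romannumeral4) for this choice of $\mathcal{D}$, I obtain that $H(A_n,A)\to 0$ is equivalent to each of the following: (b) $\lim_{n\to\infty}^{(K)} A_n = A$ and $\{A_n, n=1,2,\ldots\}$ is relatively compact in $(K(X),H)$; (c) $\lim_{n\to\infty}^{(K)} A_n = A$ and $\{A_n, n=1,2,\ldots\}\cup\{A\}$ is compact in $(K(X),H)$.

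Next I would translate the conditions (b) and (c), which are stated in $(K(X),H)$, into the conditions on unions in $(X,d)$ that appear in the statement. Applying Theorem~\ref{rce} to the family $\mathcal{D}=\{A_n, n=1,2,\ldots\}$, whose associated union $\mathbf{D}$ is $\bigcup_{n=1}^{+\infty} A_n$, shows that $\{A_n, n=1,2,\ldots\}$ is relatively compact in $(K(X),H)$ if and only if $\bigcup_{n=1}^{+\infty} A_n$ is relatively compact in $(X,d)$; hence (b) is exactly clause (\romannumeral2), and (\romannumeral1)$\Leftrightarrow$(\romannumeral2). Applying Theorem~\ref{come} to the family $\{A_n, n=1,2,\ldots\}\cup\{A\}$, whose union is $\bigcup_{n=1}^{+\infty} A_n \cup A$, shows that this family is compact in $(K(X),H)$ if and only if $\bigcup_{n=1}^{+\infty} A_n \cup A$ is compact in $(X,d)$ and $\{A_n, n=1,2,\ldots\}\cup\{A\}$ is closed in $(K(X),H)$; hence (c) is exactly clause (\romannumeral4), and (\romannumeral1)$\Leftrightarrow$(\romannumeral4).

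Finally I would close the cycle by the two elementary implications (\romannumeral4)$\Rightarrow$(\romannumeral3)$\Rightarrow$(\romannumeral2): the first holds because (\romannumeral3) merely drops the closedness requirement from (\romannumeral4), and the second holds because a compact subset of $(X,d)$ is relatively compact, so compactness of $\bigcup_{n=1}^{+\infty} A_n \cup A$ forces relative compactness of its subset $\bigcup_{n=1}^{+\infty} A_n$. Combined with (\romannumeral2)$\Leftrightarrow$(\romannumeral1)$\Leftrightarrow$(\romannumeral4), this yields the cycle (\romannumeral1)$\Rightarrow$(\romannumeral4)$\Rightarrow$(\romannumeral3)$\Rightarrow$(\romannumeral2)$\Rightarrow$(\romannumeral1), proving all four statements equivalent. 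There is no genuine obstacle here: the whole argument is a routine transcription of the proof of Proposition~\ref{hge} to the set-valued setting, and the only point requiring a moment's care is the bookkeeping that identifies the union $\mathbf{D}$ of the relevant family in $K(X)$ with $\bigcup_{n=1}^{+\infty} A_n$ (respectively $\bigcup_{n=1}^{+\infty} A_n \cup A$) when invoking Theorems~\ref{rce} and~\ref{come}.
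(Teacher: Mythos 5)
Your proposal is correct and follows essentially the same route as the paper's own proof: the paper likewise puts $\mathcal{D}=K(X)$ in Proposition \ref{hgermy}, translates the resulting conditions via Theorems \ref{rce} and \ref{come} to obtain (\romannumeral1)$\Leftrightarrow$(\romannumeral2) and (\romannumeral1)$\Leftrightarrow$(\romannumeral4), and closes the cycle with the elementary implications (\romannumeral4)$\Rightarrow$(\romannumeral3)$\Rightarrow$(\romannumeral2).
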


\begin{proof}
  The desired result follows from Proposition \ref{hgermy} and Theorems \ref{rce} and \ref{come}. The proof is routine and similar to that
  of Proposition \ref{hge}.

Put $\mathcal{D}=K(X)$ in Proposition \ref{hgermy}.
 Then we obtain that
the following conditions (a), (b) and (c)  are equivalent.
\\
(a) \ $H (A_n, A) \to 0$ as $n \to \infty$.
\\
(b) \
$\lim_{n\to\infty}^{(K)} A_n = A$,
and
 $\{A_n, n=1,2,\ldots\}$ is a relatively compact set in $(K(X), H)$.
\\
(c) \
$\lim_{n\to\infty}^{(K)} A_n = A$,
and
  $\{A_n, n=1,2,\ldots\} \cup \{A\}$ is a compact set in $(K(X), H)$.

(a) is (\romannumeral1). By Theorem \ref{rce}, (b)$\Leftrightarrow$(\romannumeral2).
By Theorem \ref{come}, (c)$\Leftrightarrow$(\romannumeral4).
We can see that
(\romannumeral4)$\Rightarrow$(\romannumeral3)
$\Rightarrow$(\romannumeral2).
So from
(a)$\Leftrightarrow$(b)$\Leftrightarrow$(c),
we have that
(\romannumeral1)$\Leftrightarrow$(\romannumeral2)$\Leftrightarrow$
(\romannumeral3)$\Leftrightarrow$(\romannumeral4).

\end{proof}

 \begin{re}
   {\rm
  Let $u\in F_{USCG} (X)$ and $\{u_n\}$ be a fuzzy set sequence in $F_{USC} (X)$.
 Let $\al\in (0,1]$.
Since $[u]_\al$ is compact in $X$,
we have that the conditions (a) $\bigcup_{n=1}^{+\infty} [u_n]_\al$ is relatively compact in $(X,d)$,
   and (b) $\bigcup_{n=1}^{+\infty} [u_n]_\al \cup [u]_\al$ is relatively compact in $(X,d)$,
are equivalent.

So
``$\bigcup_{n=1}^{+\infty} [u_n]_\al$ is relatively compact in $(X,d)$'' can be
   replaced by
   ``$\bigcup_{n=1}^{+\infty} [u_n]_\al \cup [u]_\al$ is relatively compact in $(X,d)$''
   in clause (\romannumeral2) of Proposition \ref{hge}.

Similar replacement can be made in Propositions \ref{hger},
 \ref{hgermy} and \ref{hgermyc}.

   }
 \end{re}

Propositions \ref{hger}, \ref{hge}, \ref{hgermy} and \ref{hgermyc}
can be shown in different ways.
Below we give some other proofs.

Proposition \ref{hgermy} implies Proposition \ref{hger}.

Let $S$ be a nonempty subset of $F_{USC} (X)$.
Let $u$ be a fuzzy set in $S$, and let
 $\{u_n\}$ be a fuzzy set sequence in $S$.

Put $A={\rm end} \, u$, and for $n=1,2,\ldots$, put $A_n = {\rm end} \, u_n$
in
Proposition \ref{hgermy}.

Put $\mathcal{D} = \{{\rm end}\, u: u\in F_{USC}(X)\}$
in
Proposition \ref{hgermy}.
Then from (\romannumeral1)$\Leftrightarrow$(\romannumeral3) in Proposition \ref{hgermy}, we obtain that (\romannumeral1)$\Leftrightarrow$(\romannumeral2) in Proposition \ref{hger}.

Put $\mathcal{D} = \{{\rm end}\, u: u\in S\}$
in
Proposition \ref{hgermy}.
Then from (\romannumeral1)$\Leftrightarrow$(\romannumeral3) in Proposition \ref{hgermy}, we obtain that (\romannumeral1)$\Leftrightarrow$(\romannumeral3) in Proposition \ref{hger}.

Similarly, we can show that (\romannumeral1)$\Leftrightarrow$(\romannumeral4)
and (\romannumeral1)$\Leftrightarrow$(\romannumeral5) in Proposition \ref{hger}.
So Proposition \ref{hgermy} implies Proposition \ref{hger}.

\vspace{1.6mm}
Proposition \ref{hgermy}, Theorem \ref{rcfegnum} and Theorem \ref{cfegum} imply Proposition \ref{hge}.

Let $u$ be a fuzzy set in $F_{USCG} (X)$, and let
 $\{u_n\}$ be a fuzzy set sequence in $F_{USCG} (X)$.

Put $A={\rm end} \, u$, and for $n=1,2,\ldots$, put $A_n = {\rm end} \, u_n$
in
Proposition \ref{hgermy}.

Put $\mathcal{D} = \{{\rm end}\, u: u\in F_{USCG}(X)\}$
in
Proposition \ref{hgermy}.
Then from (\romannumeral1)$\Leftrightarrow$(\romannumeral3) in Proposition \ref{hgermy} and Theorem \ref{rcfegnum}, we obtain that (\romannumeral1)$\Leftrightarrow$(\romannumeral2) in Proposition \ref{hge}.

Similarly from (\romannumeral1)$\Leftrightarrow$(\romannumeral4) in Proposition \ref{hgermy} and Theorem \ref{cfegum},
we obtain that (\romannumeral1)$\Leftrightarrow$(\romannumeral4) in Proposition \ref{hge}.
Since
(\romannumeral4)$\Rightarrow$(\romannumeral3)
$\Rightarrow$(\romannumeral2) in Proposition \ref{hge},
it follows that
(\romannumeral1)$\Leftrightarrow$(\romannumeral2)$\Leftrightarrow$(\romannumeral3)$\Leftrightarrow$(\romannumeral4)
 in Proposition \ref{hge}.
So Proposition \ref{hgermy}, Theorem \ref{rcfegnum} and Theorem \ref{cfegum} imply Proposition \ref{hge}.

\vspace{1.6mm}
 Proposition \ref{hger} implies Propositions \ref{hgermy} and \ref{hgermyc}.

 Proposition \ref{hge}
implies
 Proposition \ref{hgermyc}.

\begin{pp}
\label{bce}
(\romannumeral1) Let $A\in C(X)$ and
 $\{A_n\}$ a sequence in $C(X)$.
Then $H_{\rm end}(\chi_{A_n}, \chi_A) \to 0$ as $n \to \infty$
if and only if
$H (A_n, A) \to 0$ as $n \to \infty$.
  \\
(\romannumeral2)
 Let $A\in P(X)$ and
 $\{A_n\}$ a sequence in $P(X)$.
Then
 $\lim_{n\to\infty}^{(\Gamma)} \chi_{A_n} = \chi_{A}$
if and only if $\lim_{n\to\infty}^{(K)} A_n = A$.
\\
(\romannumeral3)
Let $\mathcal{D}$ be a subset of $C(X)$ and
 $\mathcal{B}$ a subset of $\mathcal{D}$. Then
$\mathcal{B}$ is totally bounded (respectively, relatively compact, compact, closed) in $(\mathcal{D}, H)$
if and only if
$\mathcal{B}_{F(X)}$ is totally bounded(respectively, relatively compact, compact, closed) in $(\mathcal{D}_{F(X)}, H_{\rm end})$.

\end{pp}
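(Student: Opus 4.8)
The plan is to run everything off the identity \eqref{efn}, namely $H_{\rm end}(\chi_A,\chi_B)=\min\{H(A,B),1\}$ (valid for $A,B\in C(X)$), together with the fact that $A\mapsto\chi_A$ is a bijection of $\mathcal{D}$ onto $\mathcal{D}_{F(X)}$. Truncating a metric at $1$ leaves its small-scale structure untouched, so $H$ and $H_{\rm end}$ agree on which sequences converge, which are Cauchy, and which finite $\varepsilon$-nets exist for small $\varepsilon$; this is what drives parts (\romannumeral1) and (\romannumeral3). Part (\romannumeral2) is of a different flavour and rests on computing the Kuratowski limits of the endographs ${\rm end}\,\chi_{A_n}$ directly.

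For (\romannumeral1), \eqref{efn} gives $H_{\rm end}(\chi_{A_n},\chi_A)=\min\{H(A_n,A),1\}$ for every $n$, so it suffices to note that for nonnegative extended reals $a_n$ one has $a_n\to 0$ iff $\min\{a_n,1\}\to 0$: if $\min\{a_n,1\}\to 0$ then eventually $\min\{a_n,1\}<1$, whence $a_n=\min\{a_n,1\}\to 0$, while the converse is immediate since $\min\{a_n,1\}=a_n$ once $a_n<1$. Taking $a_n=H(A_n,A)$ proves (\romannumeral1).

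For (\romannumeral2), I would first record that ${\rm end}\,\chi_S=(S\times[0,1])\cup(X\times\{0\})$ for every $S\subseteq X$, so that $\langle{\rm end}\,\chi_S\rangle_t=S$ for $t\in(0,1]$. The heart of the matter is the pair of identities
\[
\liminf_{n\to\infty}{\rm end}\,\chi_{A_n}=(X\times\{0\})\cup\big((\liminf_{n\to\infty} A_n)\times[0,1]\big)
\]
and
\[
\limsup_{n\to\infty}{\rm end}\,\chi_{A_n}=(X\times\{0\})\cup\big((\limsup_{n\to\infty} A_n)\times[0,1]\big),
\]
which I would verify directly from the sequential descriptions of $\liminf$ and $\limsup$: the slice $t=0$ always contributes all of $X\times\{0\}$, while a limit point $(x,t)$ with $t>0$ forces the approximating $t$-coordinates to be eventually positive, hence the approximating first coordinates to lie in the corresponding $A_n$ and to converge to $x$, which says exactly that $x\in\liminf_{n\to\infty}A_n$ (respectively $x\in\limsup_{n\to\infty}A_n$). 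Granting the identities, $\lim_{n\to\infty}^{(\Gamma)}\chi_{A_n}=\chi_A$ means ${\rm end}\,\chi_A$ equals both Kuratowski limits; intersecting with $X\times\{t\}$ at a fixed $t\in(0,1]$ collapses this to $A=\liminf_{n\to\infty}A_n=\limsup_{n\to\infty}A_n$, i.e. $\lim_{n\to\infty}^{(K)}A_n=A$, and the computation is reversible.

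For (\romannumeral3), write $\phi(A)=\chi_A$ for the bijection $\mathcal{D}\to\mathcal{D}_{F(X)}$. By (\romannumeral1), $H(A_n,A)\to 0$ iff $H_{\rm end}(\phi A_n,\phi A)\to 0$, so $\phi$ matches convergent sequences and their limits in both directions; since closedness, relative compactness and compactness of a subset of a metric space are each phrased purely through convergent subsequences of sequences in the subset (with limits lying, respectively, in $\mathcal{B}$, in $\mathcal{D}$, in $\mathcal{B}$), each transfers verbatim across $\phi$. For total boundedness I would argue with $\varepsilon$-nets: for $\varepsilon\in(0,1)$, since $H_{\rm end}(\chi_A,\chi_{A_i})\le H(A,A_i)$ a finite $H$-$\varepsilon$-net of $\mathcal{B}$ pushes forward to an $H_{\rm end}$-$\varepsilon$-net of $\mathcal{B}_{F(X)}$, and conversely $H_{\rm end}(\chi_A,\chi_{A_i})<\varepsilon<1$ forces $H(A,A_i)=H_{\rm end}(\chi_A,\chi_{A_i})<\varepsilon$, so an $H_{\rm end}$-$\varepsilon$-net pulls back (the range $\varepsilon\ge 1$ reduces to $\varepsilon=1/2$). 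The main obstacle is entirely in (\romannumeral2): proving the two Kuratowski-limit identities cleanly and checking the degenerate cases — when infinitely many $A_n$ are empty both lower limits collapse to $X\times\{0\}$, and when $A$ is not closed then ${\rm end}\,\chi_A$ is not closed (its positive slices equal the non-closed $A$, so by Proposition \ref{bpe}(\romannumeral1) it cannot be a Kuratowski limit), while $\lim_{n\to\infty}^{(K)}A_n=A$ is likewise impossible, so both sides of the asserted equivalence are false.
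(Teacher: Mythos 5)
Your proposal is correct and takes essentially the same route as the paper, whose entire proof is the remark that (\romannumeral1) and (\romannumeral3) follow immediately from \eqref{efn} and that (\romannumeral2) follows from the definitions of Kuratowski and $\Gamma$-convergence. You have simply written out the details the paper leaves implicit — the truncation-at-$1$ observation for (\romannumeral1), the two Kuratowski-limit identities for endographs of characteristic functions in (\romannumeral2), and the transfer of the four topological properties across the bijection $A\mapsto\chi_A$ in (\romannumeral3) — and these details are all sound.
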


\begin{proof}
  (\romannumeral1) and (\romannumeral3) follow immediately from \eqref{efn}.
  (\romannumeral2) follows
  from the definition of Kuratowski convergence and $\Gamma$-convergence.

\end{proof}

Let $\mathcal{D}$ be a nonempty subset of $C(X)$.
Let $A$ be a set in $C(X)$, and let
 $\{A_n\}$ be a sequence of sets in $\mathcal{D}$.

Let $S=C(X)_{F(X)}$ in Proposition \ref{hger}.
Then
from (\romannumeral1)$\Leftrightarrow$(\romannumeral3) in
Proposition \ref{hger},
we have that:
\\
(c-1) $H_{\rm end} (\chi_{A_n}, \chi_{A}) \to 0$ as $n \to \infty$
if and only if
$\lim_{n\to\infty}^{(\Gamma)} \chi_{A_n} = \chi_{A}$,
and
 $\{\chi_{A_n}, n=1,2,\ldots\}$ is a relatively compact set in $(C(X)_{F(X)}, H_{\rm end})$.

 By Proposition \ref{bce},
(c-1) means that (\romannumeral1)$\Leftrightarrow$(\romannumeral2) in Proposition \ref{hgermy}.

Let $S=\mathcal{D}_{F(X)}$ in Proposition \ref{hger}.
Then
from (\romannumeral1)$\Leftrightarrow$(\romannumeral3) in
Proposition \ref{hger},
we have that:
\\
(c-2) $H_{\rm end} (\chi_{A_n}, \chi_{A}) \to 0$ as $n \to \infty$
if and only if
$\lim_{n\to\infty}^{(\Gamma)} \chi_{A_n} = \chi_{A}$,
and
 $\{\chi_{A_n}, n=1,2,\ldots\}$ is a relatively compact set in $(\mathcal{D}_{F(X)}, H_{\rm end})$.

  By Proposition \ref{bce},
(c-2) means that (\romannumeral1)$\Leftrightarrow$(\romannumeral3) in Proposition \ref{hgermy}.

Similarly, by using Proposition \ref{bce}, we can show that (\romannumeral1)$\Leftrightarrow$(\romannumeral4) in Proposition \ref{hger} implies that
(\romannumeral1)$\Leftrightarrow$(\romannumeral4)
and
(\romannumeral1)$\Leftrightarrow$(\romannumeral5) in Proposition \ref{hgermy}.

By clauses (\romannumeral1) and (\romannumeral2) of Proposition \ref{bce}
and clause (\romannumeral3) of Proposition \ref{crpn},
we can see that Proposition \ref{hge}
implies
 Proposition \ref{hgermyc}.

By using level characterizations of $H_{\rm end}$ and $\Gamma$-convergence on fuzzy sets, it is easy to show that Proposition \ref{hgermyc}
also implies
Proposition \ref{hge}.

\section{Conclusion}

In this paper, we present the characterizations of total boundedness, relative compactness and compactness in $(F_{USCG} (X), H_{\rm end})$.
Here $X$ is a general metric space.
Based on this, we also give the characterizations of total boundedness, relative compactness and compactness in $(F^r_{USCG} (X), H_{\rm end})$, $r\in [0,1]$.
 $(F^r_{USCG} (X), H_{\rm end})$, $r\in [0,1]$, are metric subspaces
 of $(F_{USCG} (X), H_{\rm end})$.

The conclusions in this paper
 significantly improve
the corresponding conclusions given in our previous paper
\cite{huang}.
Therein we give the characterizations of total boundedness, relative compactness and compactness in $(F_{USCG} (\mathbb{R}^m), H_{\rm end})$.
$\mathbb{R}^m$ is a special type of metric space.

We discuss the relationship
between $H_{\rm end}$ metric and $\Gamma$-convergence
as an application of the characterizations of relative compactness,
total boundedness and compactness
given in this paper.

The results in this paper have potential applications in the research
of
fuzzy sets involved the endograph metric and the $\Gamma$-convergence.

\section*{Acknowledgement}

The author would like to thank the two anonymous referees
for their invaluable comments and suggestions
which improves the presentation of this paper.

\section*{Compliance with Ethical Standards}

Funding: This study was funded by Natural Science Foundation of Fujian Province of China (No. 2020J01706).

Conflict of Interest: The author declares that she has no conflict of interest.

Ethical approval: This article does not contain any studies with human participants or animals performed by the author.

\end{document}